\definecolor{light-gray}{gray}{0.95}
\newcommand{\pfrac}[2]{\genfrac{}{}{}{1}{#1}{#2}}
\newtheorem{theorem}{Theorem}[section]
\newtheorem{lemma}[theorem]{Lemma}
\newtheorem{proposition}[theorem]{Proposition}
\newtheorem{corollary}[theorem]{Corollary}
\newtheorem{remark}[theorem]{Remark}
\newtheorem{definition}{Definition}[section]
\newcommand{\ioe}{\iota_\eps}
\newcommand{\iog}{\iota_\tau^{\textrm{s}}}
\newcommand{\ioz}{\iota_\zeta}
\numberwithin{equation}{section}
\newcommand{\subm}{\delta_{\eta^n}}
\newcommand{\radonNinv}{\frac{\textrm{\textbf{d}}\bb P_{\delta_{\eta^n}}}{\textrm{\textbf{d}}\bb P^H_{\delta_{\eta^n}}}}
\newcommand{\radonN}{\frac{\textrm{\textbf{d}}\bb P^H_{\delta_{\eta^n}}}{\textrm{\textbf{d}}\bb P_{\delta_{\eta^n}}}}
\newcommand{\one}{\textbf{1}}
\newcommand{\FDir}{\mathcal{F}_{\text{\rm Dir}}}
\newcommand{\FNeu}{\mathcal{F}_{\text{\rm Neu}}}
\newcommand{\mc}[1]{{\mathcal #1}}
\newcommand{\bb}[1]{{\mathbb #1}}
\newcommand{\bs}[1]{{\boldsymbol #1}}
\newcommand{\<}{\langle}
\renewcommand{\>}{\rangle}
\renewcommand{\epsilon}{\varepsilon}
\newcommand{\p}{\partial}
\newcommand{\eps}{\varepsilon}
\def\centerarc[#1](#2)(#3:#4:#5){\draw[#1] ($(#2)+({#5*cos(#3)},{#5*sin(#3)})$) arc (#3:#4:#5);}
\newcommand{\dradon}{{\rm{\bf{d}}}}
\newcommand{\g}{g_{\alpha,\beta}}
\newcommand{\at}[2]{\genfrac{}{}{0pt}{}{#1}{#2}}
\newcommand{\Ddiscreto}{\mc D_{\Omega_n}}
\newcommand{\DM}{\mc D_{\mc M}}
\newcommand{\DMO}{\mc D_{\mc M_0}}
\newcommand{\Sob}{L^2(0,T;\mc H^1)}
\newcommand{\I}{{\bf{I}}^\theta_T}
\newcommand{\C}{{\rm \bf C}_\theta}
\let\oldtocsection=\tocsection
\let\oldtocsubsection=\tocsubsection
\let\oldtocsubsubsection=\tocsubsubsection
\renewcommand{\tocsection}[2]{\hspace{0em}\oldtocsection{#1}{#2}}
\renewcommand{\tocsubsection}[2]{\hspace{1em}\oldtocsubsection{#1}{#2}}
\renewcommand{\tocsubsubsection}[2]{\hspace{2em}\oldtocsubsubsection{#1}{#2}}
\DeclareRobustCommand{\SkipTocEntry}[5]{}
\title[Large Deviations for the SSEP  with  slow boundary]{Large Deviations for the SSEP\\with  slow boundary: the non-critical case}
\keywords{Symmetric exclusion, slowed boundary, large deviations.}
\subjclass[2010]{60K35}
\author{Tertuliano Franco}
\address{UFBA\\
 Instituto de Matem\'atica, Campus de Ondina, Av. Adhemar de Barros, S/N. CEP 40170-110\\
Salvador, Brasil}
\email{tertu@ufba.br}
\thanks{}
\author{Patr\'{\i}cia Gon\c{c}alves}
\address{\noindent Center for Mathematical Analysis,  Geometry and Dynamical Systems,
Instituto Superior T\'ecnico, Universidade de Lisboa, 
Av. Rovisco Pais, 1049-001 Lisboa, Portugal}
\email{p.goncalves@tecnico.ulisboa.pt}
\author{Adriana Neumann}
\address{UFRGS, Instituto de Matem\'atica, Campus do Vale, Av. Bento Gon\c calves, 9500. CEP 91509-900, Porto Alegre, Brasil}
\email{aneumann@mat.ufrgs.br}
\thanks{}
\begin{document}

 \begin{abstract}
We prove a large deviations principle for  the empirical measure of the one dimensional symmetric simple exclusion process in contact with reservoirs. The dynamics of the reservoirs is slowed down with respect to the dynamics of the system, that is, the rate at which the system exchanges particles with the boundary reservoirs is of order $n^{-\theta}$, where $n$ is number of sites in the system, $\theta$ is a non negative parameter, and the system is taken in the diffusive time scaling. Two regimes are studied here, the subcritical $\theta\in(0,1)$ whose hydrodynamic equation is  the heat equation with Dirichlet boundary conditions  and the supercritical    $\theta\in(1,+\infty)$ whose hydrodynamic equation is the heat equation with Neumann boundary conditions. In the subcritical case $\theta\in(0,1)$, the rate function that we obtain matches the rate function corresponding to the case $\theta=0$ which was derived on previous works (see \cite{blm,flm}), but the challenges we faced here are much trickier. In the supercritical case $\theta\in(1,+\infty)$, the rate function is equal  to infinity outside the set of  trajectories which preserve the total mass, meaning that, despite the discrete system exchanges particles with the reservoirs, this phenomena has  super-exponentially small probability in the diffusive scaling limit.
\end{abstract}

\maketitle

\tableofcontents 

\section{Introduction}\label{s1}
Due to its special features and simplicty, the exclusion process became a prototype interacting particle system in Probability and Statistical Mechanics: in one hand it presents an interaction among particles (the hard-core interaction) describing many physical phenomena of interest. On the other hand, it is also a mathematical treatable model, allowing rigorous proofs of those  phenomena. See \cite{kl} on the subject, for instance.

In plain words, the exclusion process is described by independent random walks on some graph under the constraint that at most one particle is allowed to occupy each vertex of the graph. 
Variations of the exclusion dynamics then lead to many different physical situations. One of the most common and relevant is to put the exclusion process in contact with reservoirs, and this has been  widely studied in the literature. In particular, the symmetric exclusion in contact with reservoirs  is the subject of study in this paper.

Recently, in \cite{bmns} it was derived the hydrodynamic limit of the one-dimensional symmetric exclusion process on the box with $n$ sites and in contact with slow reservoirs. That is, the dynamics is given by a superposition of a Kawasaki dynamics and a Glauber dynamics at the end points of the box. More precisely, the symmetric simple exclusion dynamics acts on the bulk, that is the set of points $\{1,\dots, n-1\}$ and at the sites $1$ and $n-1$, particles can be injected/removed to/from the bulk at a rate which is slowed down with respect to the bulk dynamics. More precisely, particles enter (respectively, leave) the system through the left boundary at rate $\alpha /n^{\theta}$ (respectively, $(1-\alpha)/n^{\theta}$) and particles enter (respectively, leave) the system through the right boundary at rate $\beta/ n^{\theta}$ (respectively, $(1-\beta)/ n^{\theta}$). Here, $0<\alpha, \beta<1$ and $\theta \geq 0$ are fixed parameters.

Given that the hydrodynamic limit has been established, which is, in some sense, a law of large numbers for the density of particles, since its limit is deterministic, it is quite natural to ask about its  large deviations. That is, the asymptotic probability to observe rare events (which, \textit{grosso modo}, goes exponentially fast to zero for events that do not contain the expected limit from the law of large numbers). This is precisely what we do here: in this paper we study the large deviations of the model studied in \cite{bmns}, in both the subcritical case $\theta\in(0,1)$ and in the supercritical case $\theta\in(1,+\infty)$. Due to its special characteristics and also for a matter of size of this article, the critical case $\theta=1$ is left to future work.

We describe next the main features of this work, starting with some words  about the super-\break exponential replacement lemmas which are of fundamental importance in the derivation of our results. For $\theta\in(0,1)$ since we are in the regime of Dirichlet boundary conditions,  we need to replace the value of the empirical  measure at the boundary by the value $\alpha$, at the left boundary  and $\beta$, at the right boundary. This can be achieve by taking as reference measure a product measure associated to a continuous profile $g_{\alpha,\beta}$ which is locally equal to $\alpha$ at the left boundary and locally equal to $\beta$ at the right boundary. For $\theta\in(1,+\infty)$,  we need to assure that the profiles which do not preserve the total mass of the system have probability super-exponentially small. These facts help solving the elliptic equation (associated to the weakly asymmetric system), and this provides the correct perturbation to take  in order to observe a chosen profile.

For $\theta\in(0,1)$, the large deviations rate function we obtained  coincides with the large deviations rate function of many previous works, as \cite{BSGLandim}, or \cite{flm} in dimension one and parameter $a=0$, or in \cite{Bodineau_Lagouge} if we do not consider the reaction dynamics as they do. We stress that despite having the same large deviations rate function,  the case $\theta\in(0,1)$  is not a particular case of those aforementioned works, since many of the estimates that we need are harder to obtain. Nevertheless, their exchange rates at the boundary corresponds to taking $\theta=0$ in our rates. At the end, we prove that slowing the exchange rate of  the boundary by $n^{-\theta}$, with $\theta\in(0,1)$ the large deviations behave as in the case $\theta=0$.

For $\theta\in(1,+\infty)$, contrarily to the case $\theta\in(0,1)$,  the large deviations rate function depends on the value of the density profile at the boundary.  The rate function for the case $\theta\in(0,1)$ and $\theta\in(1,+\infty)$ are then written in the following succinct form, as the supremum over the set of possible perturbations $H$ of the price function $J_H(\rho)$, but restricting the set of reachable profiles $\rho$ to distinct sets in each case. In other words, the rate function for $\theta\in(0,1)$ and $\theta\in(1,+\infty)$ are quite similar, in their form, but they have, as natural, different attainable  profiles. The constraint $\rho_t(0)=\alpha$ and $\rho_t(1)=\beta$ defines the set of reachable profiles for $\theta\in (0,1)$, which corresponds to the Dirichlet case,  while  a time-invariant mass  constraint defines the reachable set of profiles for $\theta\in(1,+\infty)$, which corresponds to the Neumann case.  Both sets of reachable profiles are natural if we take into consideration that  the corresponding  hydrodynamic equations have Dirichlet and Neumann boundary conditions, respectively.

In neither the cases $\theta\in(0,1)$ and $\theta\in(1,+\infty)$ the current through the boundary plays any role. This can be explained as follows. For $\theta\in(0,1)$, in the same spirit of \cite{BSGLandim,Bodineau_Lagouge,flm}, a super-exponential replacement lemma at the boundary holds, meaning that the exchange of particles is fast enough to not allow any large deviations in the diffusive scaling. 
On the other hand, for  $\theta\in(1,+\infty)$, the exchange of particles is so slow that any large deviations of the current through the boundary have no strength  to interfere in the large deviations of the density. That is, the current through the boundary disappears super-exponentially fast in the diffusive time scaling.
 
 The paper is structured as follows: In Section~\ref{s2} we give definitions and we state our main results. Section~\ref{s3} contains the necessary super-exponential replacement lemmas which are crucial along the arguments. In Section~\ref{s4} we study the hydrodynamic limit of the associated weakly asymmetric process. In Section ~\ref{s5} it is presented the large deviations upper bound and in Section~\ref{sec6} the lower bound.



\section{Statement of results}\label{s2}

\subsection{The model}

Given $n\geq{1}$, denote  $\Sigma_n=\{1,\ldots,n-1\}$ and consider the state space $\Omega_n:=\{0,1\}^{\Sigma_n}$. Configurations on this state space $\Omega_n$ will be denoted by $\eta$ so that, for $x\in\Sigma_n$,  $\eta(x)=0$ means that the site $x$ is vacant while $\eta(x)=1$ means that the site $x$ is occupied.   We define the infinitesimal generator $\mc L_{n}=\mc L_{n,0}+n^{-\theta}\mc L_{n,b}$  as follows. For any function $f:\Omega_n\rightarrow \bb{R}$,  
\begin{equation}\label{ln0}
\begin{split}
(\mc L_{n,0}f)(\eta)\;=\;
\sum_{x=1}^{n-2}\Big(f(\eta^{x,x+1})-f(\eta)\Big)\,, 
\end{split}
\end{equation}
\begin{equation}\label{lnb}
(\mc L_{n,b}f)(\eta)\;=\;
\sum_{x\in\{1,n-1\}} \Big[{r_x}(1-\eta(x))+(1-r_x)\eta(x)\Big]\Big(f(\sigma^{x} \eta)-f(\eta)\Big)\,,
\end{equation}
with $r_1=\alpha$ and  $r_{n-1}=\beta$. Above,  for $x\in\{1,\ldots, n-2\}$, the configuration $\eta^{x,x+1}$ is obtained from $\eta$ by exchanging the occupation variables $\eta(x)$ and $\eta(x+1)$, i.e.,
\begin{equation}\label{etax,x+1}
(\eta^{x,x+1})(y)\;=\;\left\{\begin{array}{cl}
\eta(x+1)\,,& \mbox{if}\,\,\, y=x\,,\\
\eta(x)\,,& \mbox{if} \,\,\,y=x+1\,,\\
\eta(y)\,,& \mbox{otherwise,}
\end{array}
\right.
\end{equation}
 and for $x\in\{1,n-1\}$ the configuration  $\sigma^x\eta$ is obtained from $\eta$ by flipping  the occupation  variable $\eta(x)$, i.e,
  \begin{equation}\label{sigmax}
(\eta^x)(y)\;=\;\left\{\begin{array}{cl}
1-\eta(y)\,,& \mbox{if}\,\,\, y=x\,,\\
\eta(y)\,,& \mbox{otherwise.}
\end{array}
\right.
\end{equation}
The dynamics of this model can be described in words in the following way. In the bulk, particles move according to continuous time symmetric random walks under the  exclusion rule: whenever a particle tries to jump to an occupied site, such jump is suppressed. Additionally, at the left boundary, particles can be created (resp. removed) at rate $\alpha/ n^\theta$ (resp. at rate $(1-\alpha)/n^\theta$) and at the right boundary, particles can be created (resp. removed) at rate $\beta/ n^\theta$ (resp. at rate $(1-\beta)/ n^\theta$), see Figure~\ref{fig1} for an illustration.
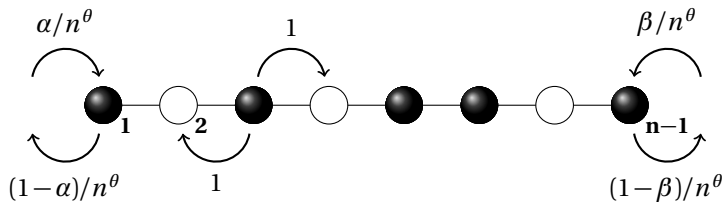
\begin{figure}[H]
\centering
\begin{tikzpicture}
\centerarc[thick,<-](0.5,0.3)(10:170:0.45);
\centerarc[thick,->](0.5,-0.3)(-10:-170:0.45);
\centerarc[thick,->](2.5,-0.3)(-10:-170:0.45);
\centerarc[thick,<-](3.5,0.3)(10:170:0.45);
\centerarc[thick,<-](8.5,-0.3)(-10:-170:0.45);
\centerarc[thick,->](8.5,0.3)(10:170:0.45);
\draw (1,0) -- (8,0);

\shade[ball color=black](1,0) circle (0.25);
\shade[ball color=black](3,0) circle (0.25);
\shade[ball color=black](5,0) circle (0.25);
\shade[ball color=black](6,0) circle (0.25);
\shade[ball color=black](8,0) circle (0.25);

\filldraw[fill=white, draw=black]
(2,0) circle (.25)
(4,0) circle (.25)
(7,0) circle (.25);

\draw (1.3,-0.05) node[anchor=north] {\small  $\bf 1$}
(2.3,-0.05) node[anchor=north] {\small $\bf 2 $}
(8.5,-0.05) node[anchor=north] {\small $\bf n\!-\!1$};
\draw (0.5,0.8) node[anchor=south]{$\alpha/n^\theta$};
\draw (0.5,-0.8) node[anchor=north]{$(1-\alpha)/n^\theta$};
\draw (3.5,0.8) node[anchor=south]{$1$};
\draw (8.5,-0.8) node[anchor=north]{$(1-\beta)/n^\theta$};
\draw (8.5,0.8) node[anchor=south]{$\beta/n^\theta$};
\draw (2.5,-0.8) node[anchor=north]{$1$};
\end{tikzpicture}
\caption{Illustration of  jump rates. The leftmost and rightmost rates are the entrance/exiting rates.}\label{fig1}
\end{figure}
When $\alpha=\beta=\rho$, for which there is no external current induced by the reservoirs, the Bernoulli product measures given by $\nu_\rho\{\eta:\eta(x)=1\}=\rho$ are invariant. However, when $\alpha \neq \beta$, this is no longer true.  Nevertheless, for $\alpha\neq \beta$, there is a unique stationary measure of the system, that we denote by  $\mu_{\text{ss}}$, which is not a product measure. For further properties on this measure we refer the reader to \cite{d}, for instance. In \cite[Theorem 2.2]{bmns}, it is shown that this measure is associated to a profile $\bar{\rho}(\cdot)$ which is stationary with respect to the corresponding hydrodynamic equation.

Fix, once and for all, a time horizon $T>0$. 
We denote by $\{\eta_t: t\in[0,T]\}$  the Markov process with generator $n^2\mc L_n$, omitting the dependence on $n$ to shorten notation. This family of Markov processes  $\{\eta_t\,:\, t\in[0,T]\}$ indexed on $n\in \bb N$ is what we will call the \textit{Exclusion Process with Slow Boundary} (EPSB).

\subsection{Empirical measure}

The so-called  \textit{empirical measure}, which represents the spatial density of particles in the system, is defined by
\begin{equation}\label{eq:emp_mea}
\pi^n(du)\;=\;\pi^n(\eta, du)\;:=\;\frac {1}{n} \sum_{x = 1}^{n\!-\!1}  \eta(x)\, \delta_{\frac{x}{n}}(du)\,,
\end{equation}
where $\delta_{\frac{x}{n}}$ is the Dirac-measure on $x/n\in [0,1]$ and $\eta\in\Omega_n$.  
Note that the empirical measure is a random positive measure on $[0,1]$ with mass bounded by one.
Let
\begin{equation}\label{defM}
\mc M\;=\;\{\,\mu \mbox{ is a positive measure on } [0,1]: \mu([0,1])\leq 1\}\,,
\end{equation}
hence $\pi^n\in\mc M$.
The integral of a function $f:[0,1]\to \bb R$ with respect to the empirical measure is $
\int_0^1 f(u)\,\pi^n(du)=\frac {1}{n} \sum_{x = 1}^{n\!-\!1}  \eta(x)\, f(\pfrac{x}{n})$, for which we will write $\<\pi^n,f\>$. 
The time evolution of the density of particles can be represented by the time evolution of the empirical measure as
\begin{equation*}
\pi^n_t(du)\;=\;\pi^n(\eta_t, du)\;:=\;\frac {1}{n} \sum_{x = 1}^{n\!-\!1}  \eta_t(x)\, \delta_{\frac{x}{n}}(du)\,,
\end{equation*}
where  $\{\eta_t\,:\, t\in[0,T]\}$ is the EPSB. This is the object we are concerned with in this work. 
\subsection{Notations} 
In what follows we present notations to be used everywhere in this paper and we also recall some classical spaces from Analysis.\smallskip

$\bullet$ We will write $\<\cdot,\cdot\>$  to denote both an integral of a function $f$ with respect to a measure $\mu$, that is, 
$\<\mu,f\>=\int_0^1 f(u)\,d\mu(u)$, 
and to denote the inner product on $L^2[0,1]$ given by $\<f,g\>=\int_0^1 f(u)\,g(u)\,du$. The double bracket $\<\!\<\cdot,\cdot\>\!\>$ denotes the inner product in $L^2([0,T]\times[0,1])$ and the corresponding norm is denoted by $\|\cdot\|_{L^2(0,T;(0,1))}$ \smallskip

$\bullet$  Let  $\Ddiscreto=\mc D([0,T],\Omega_n)$ be  the space of trajectories that are right continuous, with left limits and  taking values in $\Omega_n$, which was defined in the beginning of the Section \ref{s2}.
Denote by $\bb P_{\mu_{n}} $ the probability on  $\Ddiscreto$ induced by $\{\eta_t\,:\, t\in[0,T]\}$ and by the initial measure $\mu_n$, and let $\bb E_{\mu_n}$ be the expectation with respect to $\bb P_{\mu_n}$.

Denote by $\DM=\mc D([0,T],\mc M)$   the space of trajectories that are right continuous, with left limits and  taking values in $\mc M$, which was defined in \eqref{defM}. 
Denote by $\bb Q_{\mu_{n}} $ the probability on  $\DM$ induced by
 $\{\pi^n_t\,:\, t\in[0,T]\}$ and by the initial measure $\mu_n$ on $\Omega_n$.
 
Denote by $\DMO$ the subset of $\DM$ consisting in trajectories taking values on measures which have density with respect to the Lebesgue measure between zero and one.\smallskip

$\bullet$ We will denote $C^{i,j} :=C^{i,j}([0,T]\times [0,1])$, the set of functions which are $C^i$ in time and $C^j$ in space. When only one \textit{superindex} appears, it  means that we are considering a function depending only on the space variable. For example, $C^0$ denotes the set of continuous functions $H:[0,1]\to \bb R$.  When a \textit{subindex} $0$ appears, it will restrict the considered set to functions which vanish at the boundary of $[0,1]$. When a subindex $c$ appears, it will restrict the considered set to functions of support compact in $(0,1)$.
For example, by $C^{1,2}_c$ we  mean the subset of $C^{1,2}$ with functions of compact support in $[0,T]\times (0,1)$ and by $C^{1,2}_0$ we  mean the subset of $C^{1,2}$ composed by functions $H$ such that $H(t,0)=H(t,1)=0$ for all $t\geq 0$. 

Define then
\begin{equation}\label{C_theta_set}
\C \;=\; 
\begin{cases}
C_0^{1,2}, \;\textrm{if }\theta\in(0,1),\\
C^{1,2}, \; \textrm{if }\theta\in(1,+\infty).
\end{cases}
\end{equation}

$\bullet$  Given a function $g:[0,T]\times [0,1]$, we sometimes use  $g_t(u)$ to denote  $g(t,u)$. 
It should   not be confounded  with the notation $\partial_t g(t,u)$ for  the time derivative.\smallskip

$\bullet$ The notation $g(n)=O(f(n))$ means  $g(n)$ is bounded from above by $Cf(n)$, where the  constant $C>0$ does not depend on $n$. A presence of \textit{subindexes} in the \textit{big Oh}  means that the constant may depend on those subindexes. Equivalently, $f\lesssim g$ will stand for $f=O(g)$.
The notation $g(n)=o(f(n))$ will stand for 
$\displaystyle \lim_{n\to\infty} g(n)/f(n)=0$.\smallskip

$\bullet$ The indicator function of a set $A$ will be written as $\textbf 1_{A}(u)$, which is
one if $u\in A$ and zero otherwise.\smallskip

$\bullet$ The discrete derivatives and the discrete Laplacian are defined by
\begin{equation}\label{nabla}\nabla_n^+ H_n(\pfrac{x}{n})\;=\;n \Big[H(\pfrac{x+1}{n})-H(\pfrac{x}{n})\Big]\,, \qquad \nabla_n^- H_n(\pfrac{x}{n})\;=\;n \Big[H(\pfrac{x}{n})-H(\pfrac{x-1}{n})\Big]
\,,
\end{equation}
\begin{equation}\label{lapla}\Delta_n H_n(\pfrac{x}{n})\;=\;n^2 \Big[H(\pfrac{x+1}{n})+H(\pfrac{x-1}{n})-2H(\pfrac{x}{n})\Big]\,.
\end{equation}

\begin{definition}[Sobolev Space]\label{Sobolev}\quad
Let  $\mc H^1$ be the set of all locally summable functions $\zeta: (0,1)\to\bb R$ such that
there exists a function $\p_u\zeta\in L^2$ satisfying
$ \<\partial_uG,\zeta\>\,=\,-\<G,\partial_u\zeta\>$,
for all $G\in C^{\infty}_{c}$.
For $\zeta\in\mc H^1$, we define the norm
\begin{equation*}
 \Vert \zeta\Vert_{\mc H^1}\,:=\, \Big(\Vert \zeta\Vert_{L^2}^2+\Vert\partial_u\zeta\Vert_{L^2}^2\Big)^{1/2}\,.
\end{equation*}
Let $\Sob$ be the space of
 all measurable functions
$\xi:[0,T]\to \mc H^1$ such that
\begin{equation*}
\Vert\xi \Vert_{\Sob}^2 \,
:=\,\int_0^T \Vert \xi_t\Vert_{\mc H^1}^2\,dt\,<\,\infty\,.
\end{equation*}
\end{definition}

\begin{remark}\rm  An  equivalent definition  for the Sobolev space 
$\Sob$ is the  set  of bounded functions
$\xi:[0,T]\times \bb T\to \bb R$ such that there exists a function 
$\partial\xi\in L^2([0,T]\times \bb T)$ 
satisfying 
\begin{equation*}
\<\!\< \partial_u H,\xi \>\!\> \;=\;- \<\!\<  H,\partial\xi \>\!\>\,,
\end{equation*}
for all functions $H\in C^{0,1}_c$.
\end{remark}

\subsection{Hydrodynamic limit}

Fix a measurable profile $ \rho_0: [0,1] \rightarrow [0,1 ]$. For each  $n \in \bb N$, let $\mu_n$ be a probability measure on $\Omega_n$.  We say that the sequence $\{\mu_n\}_{n\in \bb N}$ is \textit{associated} with the profile $\rho_0(\cdot)$ if,  for any $ \delta >0 $ and any  $ f \in C^0$, the following limit holds:
\begin{equation}\label{eq3}
\lim_{n\to\infty}
\mu_{n} \Big[\, \eta:\, \Big|\<\pi^n_0,f\>
- \<\rho_0,f\>\Big| > \delta\,\Big] \;=\; 0\,.
\end{equation}

From \cite{bmns} we have the following result:
\begin{theorem}[Hydrodynamic limit for the EPSB, c.f.\ \cite{bmns}]
\label{thm:hid_lim}\quad 
 Suppose that the 
sequence\break $\{\mu_n\}_{n\in \bb N}$ is \textit{associated} with a profile $\rho_0(\cdot)$ in the sense of \eqref{eq3}. 
Then,  for each $ t \in [0,T] $, for any $ \delta >0 $ and any continuous function $ f:[0,1]\to\bb R $, 
\begin{equation*}
\lim_{ n \rightarrow +\infty }
\bb P_{\mu_{n}} \Big[\,\eta_{\cdot} : \Big\vert\<\pi^n_t,\,f\> - \<\rho_t,\,f\>\, \Big\vert
> \delta \,\Big] \;=\; 0\,,
\end{equation*}
 where  $\rho(t,\cdot)$ is:\medskip

$\bullet$ If $0<\theta<1$,  the unique weak solution of the 
 heat equation with    Dirichlet boundary conditions
\begin{equation}\label{hydroeq}
\begin{cases}
\p_t \rho(t,u)= \p_u^2 \rho(t,u)\,, & \textrm{ for } t>0\,,\, u\in (0,1)\,,\\
 \rho(t,0) =\alpha\,, \, \rho(t,1) =\beta & \textrm{ for } t>0\,,\\
 \rho(0,u)=\rho_0(u)\,,& \textrm{ for } u\in [0,1]\,.
\end{cases}
\end{equation}

$\bullet$ If $\theta>1$,  the unique weak solution of the 
  heat equation with    Neumann boundary conditions
\begin{equation}\label{hydroeq_Neumann}
\begin{cases}
\p_t \rho(t,u)= \p_u^2 \rho(t,u)\,, & \textrm{ for } t>0\,,\, u\in (0,1)\,,\\
\p_u \rho(t,0) =\p_u \rho(t,1)=0\,, & \textrm{ for } t>0\,,\\
\rho(0,u)=\rho_0(u)\,,& u\in [0,1]\,.
\end{cases}
\end{equation}

\end{theorem}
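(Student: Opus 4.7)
The plan is to follow the standard entropy/martingale method for hydrodynamic limits: first establish tightness of $\{\bb Q_{\mu_n}\}_{n\in\bb N}$ in $\DM$, then characterize any limit point as being concentrated on absolutely continuous trajectories whose densities solve the corresponding PDE weakly, and finally invoke uniqueness of weak solutions to identify the limit. Since $\mc M$ is compact in the weak topology, tightness reduces by Aldous' criterion to controlling the oscillations of $\<\pi^n_t,H\>$ for $H$ in a countable dense subset of $\C$, which in turn follows from Dynkin's formula: the process
\begin{equation*}
M^n_t(H) \;=\; \<\pi^n_t,H_t\> - \<\pi^n_0,H_0\> - \int_0^t \big(\p_s + n^2\mc L_n\big)\<\pi^n_s,H_s\>\,ds
\end{equation*}
is a martingale whose predictable quadratic variation is of order $T/n + T n^{-\theta}$, and whose integrand is uniformly bounded for $H$ smooth.

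The heart of the argument is an explicit computation, using \eqref{ln0}--\eqref{lnb} and Abel summation, which yields
\begin{equation*}
n^2\mc L_n\<\pi^n,H\> \;=\; \tfrac{1}{n}\!\!\sum_{x=2}^{n-2}\!\eta(x)\Delta_n H(\tfrac{x}{n}) + \eta(1)\nabla_n^+ H(\tfrac{1}{n}) - \eta(n-1)\nabla_n^- H(\tfrac{n-1}{n}) + n^{1-\theta}\Big[(\alpha-\eta(1))H(\tfrac{1}{n}) + (\beta-\eta(n-1))H(\tfrac{n-1}{n})\Big].
\end{equation*}
For $\theta\in(0,1)$ I take $H\in C_0^{1,2}$, so that Taylor expansion gives $H(1/n),H((n-1)/n)=O(1/n)$; the slow-boundary contribution is then of order $n^{-\theta}$ and vanishes. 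Passing to the limit along a subsequence, any cluster point is supported on $\rho\in\DMO$ satisfying the heat equation weakly in $(0,1)$, with boundary gradient terms $\rho_s(0)\p_u H_s(0)-\rho_s(1)\p_u H_s(1)$. To promote this to the Dirichlet boundary conditions $\rho_s(0)=\alpha$, $\rho_s(1)=\beta$ I need a boundary replacement lemma stating that any space-time average of $\eta_s(1)$ (resp.\ $\eta_s(n-1)$) can be replaced by $\alpha$ (resp.\ $\beta$), which in turn is obtained by a Feynman--Kac/entropy-inequality bound exploiting that the boundary Dirichlet form scales like $n^{2-\theta}$, sufficient to force fast equilibration with the reservoir as long as $\theta<1$.

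For $\theta\in(1,+\infty)$ I take any $H\in C^{1,2}$; now the prefactor $n^{1-\theta}\to 0$ kills the $\mc L_{n,b}$-term outright, and the surviving discrete Laplacian and bulk-boundary-gradient terms pass to the limit giving
\begin{equation*}
\<\rho_t,H_t\> - \<\rho_0,H_0\> \;=\; \int_0^t\<\rho_s,\p_s H_s + \p_u^2 H_s\>\,ds + \int_0^t\big[\rho_s(0)\p_u H_s(0) - \rho_s(1)\p_u H_s(1)\big]\,ds,
\end{equation*}
which is exactly the weak formulation of \eqref{hydroeq_Neumann}. Uniqueness of weak solutions of \eqref{hydroeq} and \eqref{hydroeq_Neumann} is classical, via standard energy estimates, which closes the argument. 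The hard part is the subcritical boundary replacement at $\theta\in(0,1)$: one must quantify equilibration of the boundary site in the \emph{slowed} regime, and verify that the natural reference product measure (associated to a smooth profile matching $\alpha$ and $\beta$ at the endpoints, as used for the large-deviations analysis later in the paper) yields a Dirichlet-form coercivity estimate that beats the bulk contribution at the boundary sites; the sharpness of this estimate is precisely what fails at and above the critical threshold $\theta=1$.
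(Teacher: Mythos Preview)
The paper does not actually prove this theorem: it is quoted verbatim from \cite{bmns}, and the only remark following the statement is that the result is a corollary of the weak convergence $\bb Q_{\mu_n}\Rightarrow \bb Q$ proved there. So there is no ``paper's own proof'' to compare against beyond the citation.

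That said, your sketch is essentially the standard entropy/martingale route used in \cite{bmns}, and it is also the same scheme the present paper carries out in detail in Section~\ref{s4} for the perturbed process (Theorem~\ref{thm:hid_lim_weak}): tightness via Aldous plus the martingale decomposition \eqref{dynkin}, characterization of limit points through the generator computation and boundary replacement lemmas (Lemma~\ref{lem:rep_lem_general}), and uniqueness of weak solutions. Your generator identity and the treatment of the two regimes match that template. One point you underplay: for $\theta>1$ the surviving boundary terms $\eta_s(1)\nabla_n^+H_s(\tfrac{1}{n})$ and $\eta_s(n-1)\nabla_n^-H_s(\tfrac{n-1}{n})$ are not functions of the empirical measure, so passing to $\rho_s(0)\,\p_uH_s(0)$ and $\rho_s(1)\,\p_uH_s(1)$ still requires a one-block replacement at the boundary (replace $\eta_s(1)$ by $\eta_s^{\eps n}(1)=\pi^n_s*\iota_\eps(0)$ and then send $\eps\downarrow 0$), exactly as in \eqref{eq:V_theta} for $\theta>1$; this is not automatic from $n^{1-\theta}\to 0$.
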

In \cite{bmns} the authors  prove that the 
sequence of probability measures $ \{ \bb Q_{\mu_n}\}_{n \in \bb N} $ converges weakly to $ \bb Q $ as $n \rightarrow +\infty $, where $ \bb Q $ is  the probability measure on $\DM $ which gives mass $1$ to the path $ \pi(t,du) = \rho_t(u)du $, being $ \rho_t(\cdot) $ the unique  weak solution of \eqref{hydroeq}. Observe that  Theorem \ref{thm:hid_lim} is a corollary of this result.

\subsection{Large Deviations Principle}\label{sub2.5}
We start by recalling the notion of \textit{energy} in the way as \cite{BSGLandim,flm,FN2017} and many other related papers. 
\begin{definition}\label{energy} For $H\in C^{0,1}_c$, define
	$\mc E_H: \DM\to \bb R\cup\{+\infty\}$ by
	\begin{equation*}
	\mc E_H(\pi)\,=\,\left\{\begin{array}{cl}
	\<\!\< \p_u H,\rho \>\!\>
	- 2 \| H\|_{L^2(0,T;(0,1))} \,, &  \mbox{if}\,\,\,\,\pi\in \DMO\mbox{ and }\pi_t(du)=\rho_t(u)\,du\,,\\ 
	\infty\,, &\mbox{otherwise\,.}
	\end{array}
	\right. 
	\end{equation*}
The energy functional $\mc E:\DM\to \bb R_+\cup\{\infty\}$ is then defined as
	$	\mc E(\pi)=\sup_{H\in C^{0,1}_c}\mc E_H(\pi)$.
\end{definition} 
By the Riesz Representation Theorem, it is  well-known that $\mc E(\pi)<\infty$ implies  $\pi_t=\rho_t(u)du$ with $\rho$ belonging to the Sobolev space $L^2(0,T;\mc H^1(0,1))$, see \cite{FN2017}, for instance.

 Given a profile $\rho\in L^2(0,T;\mc H^1(0,1))$, we define the linear functional $\ell_{H}^\theta(\rho)$ acting on $H\in C^{1,2}$ as
\begin{equation*}
\ell_H^\theta(\rho)     = \<\rho_T,H_T\>-\<\rho_0,H_0\>-\int_0^T \<\rho_s, (\p_s+\Delta)H_s\>\,ds
+ \int_0^T\Big(\beta\p_uH_s(1)- \alpha\p_uH_s(0) \Big)ds
\end{equation*}
if $\theta\in(0,1)$
and
\begin{align*}
\ell_H^\theta(\rho)  &    = \<\rho_T,H_T\>-\<\rho_0,H_0\>-\int_0^T \<\rho_s, (\p_s+\Delta)H_s\>\,ds
+ \int_0^T\Big( \rho_s(1)\p_uH_s(1)- \rho_s(0)\p_uH_s(0) \Big)ds\\
& = \<\rho_T,H_T\>-\<\rho_0,H_0\>-\int_0^T \<\rho_s, \p_sH_s\>\,ds + \int_0^T \<\p_u\rho_s, \p_u H_s\>\,ds
\end{align*}
if $\theta\in(1,+\infty)$.
Let   $\Phi_H(\rho)$ be  the non-negative convex functional acting on $H\in C^{1,2}$ as
\begin{equation}\label{12a}
\begin{split}
& \Phi_H(\rho) \;=\; \int_0^T \< \chi(\rho_s), (\p_u H_s)^2 \>\,ds
\end{split}
\end{equation}
where   $\chi(u)=u(1-u)$ is the so-called \textit{static compressibility} of the system.
Given $H\in C^{1,2}$, we define the functional $J_H^\theta: \DM\to \bb R\cup \{+\infty\}$ by
\begin{equation}\label{functional:J}
J_H^\theta(\pi)\;=\;\begin{cases}
\ell_H^\theta(\rho)-\Phi_H(\rho)\,,&\mbox{if }\pi\in\mathcal F_\theta \text{ and }   \mc E(\pi)<\infty  \text{ with }  \pi_t=\rho_t(u)du\,,\\
+\infty\,,& \mbox{otherwise},
\end{cases}
\end{equation}
where

\begin{equation}\label{Ftheta}
 \mc F_\theta\;:=\;
\begin{cases}
\DM, & \text{ if }  \theta\in (0,1),\\
\big\{\pi\in \DM: \<\pi_t,1\>=\<\pi_0,1\> \,,\, \forall t\in[0,T]\big\}, & \text{ if } \theta\in(1,+\infty).
\end{cases} 
 \end{equation}
 We point out that, for $\theta\in(1,+\infty)$, $\mc F_\theta$ is the set of trajectories   such that the total  mass  is constant in time and also that the   boundary integrals in $\ell^\theta_H(\rho)$ are well-defined  due to the assumption $\rho\in L^2(0,T;\mc H^1(0,1))$ and the notion of \textit{trace} of a Sobolev space, see  for instance \cite{E}. \medskip

We  study in this paper the large deviations of the empirical measure starting the system  from a deterministic configuration $\eta^n$,  such that the sequence $\{\eta^n\}_{n\in \bb N}$ of deltas of Dirac  is  associated to $\gamma(u) du$, where   $\gamma:[0,1]\to[0,1]$ is a continuous  profile bounded away from $0$ and $1$.   The probability and expectation of the process starting from a delta of Dirac measure at $\eta^n$ will be denoted  by $\bb P_{\delta_{\eta^n}}$ and $\bb E_{\delta_{\eta^n}}$, respectively. We define next the large deviations rate function.
\begin{definition}\label{I def}Recall from \eqref{C_theta_set} the definition of $\C$.   Let  $\I(\cdot\,|\gamma):\DM\to\bb R_+\cup\{+\infty\}$ be defined by
 \begin{equation}\label{rate_function}
\I(\pi|\gamma)\;=\; \sup_{H\in \C}\, J_H^\theta(\pi)\,.
 \end{equation}
\end{definition}
The rate functional above is lower semi-continuous with compact level sets in both cases $\theta\in[0,1)$ and $\theta\in(1,+\infty)$. The proof of this fact can be readily adapted from \cite[Theorem 4.7]{LandimTsunoda} taking into account that the set of trajectories with constant mass is a closed set in $\DM$.

 We are now in position to state the main result of this paper. 
Let $\bb Q_{\delta_{\eta^n}}$ be the probability measure induced by the empirical measure when we start the system  from $\eta^n$, where $\{\eta^n\}_{n\in \bb N}$ is a sequence of  deterministic configurations  associated to the continuous  profile $\gamma:[0,1]\to[0,1]$, which is bounded away from $0$ and $1$.
\begin{theorem}\label{LDP}
The sequence of probability  measures $\{\bb Q_{\delta_{\eta^n}}\}_{n\geq 1}$ satisfies the following large deviations principle:
\begin{enumerate}[a)]
  \item  {\bf (Upper bound)} For any   closed subset $\mc C$ of $\DM$, 
 \begin{equation*}
 \varlimsup_{n\to\infty}\pfrac{1}{n}\log \bb Q_{\delta_{\eta^n}}\big[\,\mc C\,\big]
\;\leq\; -\inf_{\pi\in\mc C} \I(\pi|\gamma)\,.
 \end{equation*}
 \medskip
 \item  {\bf (Lower bound)} For any  open subset $\mc O$ of $\DM$,
\begin{equation*}
 \varliminf_{n\to\infty}
\pfrac{1}{n}\log \bb Q_{\delta_{\eta^n}}\big[\,\mc O\,\big]\; \geq \; -\inf_{\pi\in \mc O}\I(\pi|\gamma)\,.
\end{equation*}
\end{enumerate}
\end{theorem}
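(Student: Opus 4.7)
The plan is to follow the Donsker-Varadhan--Kipnis-Olla-Varadhan strategy, as streamlined in \cite{BSGLandim,flm}, adapted to the two slow-boundary regimes. The central object is, for each $H\in\C$, the exponential martingale
\begin{equation*}
M^{n,H}_t \;=\; \exp\Big\{n\<\pi^n_t,H_t\>-n\<\pi^n_0,H_0\>-\int_0^t e^{-n\<\pi^n_s,H_s\>}(\p_s+n^2\mc L_n)e^{n\<\pi^n_s,H_s\>}\,ds\Big\}.
\end{equation*}
Two summations by parts on the bulk contribution, together with an expansion of the exponential, split the Dynkin integrand into: a linear piece producing $\<\rho_s,\Delta H_s\>$ plus discrete boundary terms at $x=1,n-1$; a quadratic piece producing $\chi(\eta(x))(\nabla^+_n H_s)^2$; and a contribution from $n^{2-\theta}\mc L_{n,b}$. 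Identifying the limit of this functional with $J^\theta_H(\pi)$ of \eqref{functional:J}, in each regime of $\theta$, is the backbone of both bounds, and rests on the super-exponential replacement lemmas of Section~\ref{s3}: they freeze the boundary occupations at $\alpha,\beta$ when $\theta\in(0,1)$ and make the boundary exchange of particles vanish super-exponentially when $\theta\in(1,+\infty)$.

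For the upper bound I first treat compact sets $K\subset\DM$: $\bb E_{\delta_{\eta^n}}[M^{n,H}_T]=1$ combined with Chebyshev gives $\bb Q_{\delta_{\eta^n}}(K)\le\exp(-n\inf_{\pi\in K}J^{\theta,n}_H(\pi)+o(n))$, with $J^{\theta,n}_H$ the discrete analogue; invoking the replacement lemmas trajectory by trajectory and a standard minimax step (as in \cite[Lemma A2.3.3]{kl}) upgrades this to $-\inf_K\I(\cdot|\gamma)$. Extension to closed sets comes from exponential tightness, which follows from Aldous' criterion and the uniform bound on the number of particles. In parallel, an energy estimate of the type in \cite{FN2017} shows that $\I(\pi|\gamma)<\infty$ forces $\pi_t=\rho_t(u)\,du$ with $\rho\in L^2(0,T;\mc H^1)$, which justifies enlarging the class of test functions from $C^{0,1}_c$ (as in Definition~\ref{energy}) to the full class $\C$ appearing in \eqref{rate_function}.

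For the lower bound, the main ingredient is Section~\ref{s4}: the weakly asymmetric process obtained by tilting $n^2\mc L_n$ with $H$ has hydrodynamic limit $\rho^H$, the unique weak solution of the perturbed parabolic equation (heat equation with nonlinear drift $-\p_u(\chi(\rho)\p_u H)$ and the boundary conditions of \eqref{hydroeq} or \eqref{hydroeq_Neumann}). Given a smooth target $\rho$ in the effective domain of $\I(\cdot|\gamma)$, I invert $\rho=\rho^H$ to find $H$, and then use the Radon-Nikodym identity $\bb Q_{\delta_{\eta^n}}(\mc O)=\bb E^H_{\delta_{\eta^n}}[\Ind{\pi^n\in\mc O}(M^{n,H}_T)^{-1}]$, restricted to an event where $(M^{n,H}_T)^{-1}\ge\exp(-nJ^\theta_H(\rho)-o(n))$ and on which $\pi^n$ stays close to $\rho^H=\rho$, to obtain
\begin{equation*}
\varliminf_{n\to\infty}\tfrac{1}{n}\log\bb Q_{\delta_{\eta^n}}(\mc O)\;\ge\;-J^\theta_H(\rho),
\end{equation*}
for any sufficiently small neighborhood $\mc O$ of $\rho$. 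A density argument, approximating any $\pi$ with $\I(\pi|\gamma)<\infty$ by such smooth profiles while respecting the Dirichlet trace at the boundary ($\theta<1$) or the total-mass constraint ($\theta>1$), closes the lower bound.

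I expect three main obstacles. First, the super-exponential replacements at the boundary: for $\theta\in(0,1)$ the boundary rate $n^{2-\theta}$ is weaker than the natural diffusive scale, so the usual entropy argument against a Bernoulli product measure of constant density fails, forcing the use of the reference measure $\initmeasure$ associated with a smooth profile $\g$ locally equal to $\alpha$ and $\beta$ near the boundaries, so that the Dirichlet form can absorb the entropy production; for $\theta\in(1,+\infty)$ the slowness of the boundary makes mass conservation natural, but upgrading it to a genuinely super-exponential estimate (rather than the mere $o(n)$ one sees in hydrodynamics) requires a further perturbation of $\g$ adjusted to the target mass. Second, the density argument in the lower bound must produce smooth approximants that stay strictly inside $(0,1)$, so that $\chi(\rho)$ is nondegenerate, and that satisfy the correct boundary condition or mass constraint; a two-step mollification in space and time, combined with convex combinations with the stationary profile of \eqref{hydroeq}--\eqref{hydroeq_Neumann}, should achieve this. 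Third, the inversion $\rho\mapsto H$ in the Neumann case is well-posed only modulo an additive constant fixed by the mass, which matches exactly the appearance of $\mc F_\theta$ in \eqref{Ftheta} and explains why test functions in $\C$ need not vanish at the boundary when $\theta>1$.
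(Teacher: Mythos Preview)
Your roadmap matches the paper's proof essentially step by step: super-exponential replacements against the product measure $\nu^n_{\g(\cdot)}$ (Section~\ref{s3}), the Radon--Nikodym/exponential-martingale computation together with the Minimax Lemma and exponential tightness for the upper bound (Section~\ref{s5}), and the perturbed hydrodynamics plus an $\I$-density argument for the lower bound (Sections~\ref{s4} and~\ref{sec6}). Two minor points where the paper's execution differs from what you anticipate: for $\theta>1$ the super-exponential mass conservation (Lemma~\ref{lemma_5.1} and Lemma~\ref{lem:current_exp}) comes not from any further perturbation of the reference profile but from a direct large-deviations estimate on the boundary Poisson clocks (whose total rate is $O(n^{2-\theta})$), and the $\I$-density in the Neumann case follows the construction of \cite{LandimTsunoda} (evolve by the hydrodynamic equation on $[0,\delta]$, reverse on $[\delta,2\delta]$, then time-shift the original path) rather than convex interpolation with a stationary profile.
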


\section{Superexponential Replacement Lemmas}\label{s3}
We start this section  by stating some important estimates on entropy bounds and Dirichlet forms. For technical reasons, it will be important to fix a  particular case where the profile is locally constant equal to $\alpha$ near zero, locally constant equal to $\beta$ near one, and linearly interpolated elsewhere. We denote once and for all this profile by $\g:[0,1]\to[0,1]$ that is  illustrated in Figure~\ref{fig3}.
\begin{figure}[!htb]
\centering
\begin{tikzpicture}[scale=1,smooth];
\draw[thick,->] (0,-0.5)--(0,4.5) node[left]{$\g$};
\draw[thick,->] (-0.5,0)--(4.5,0);
\draw (0,0) node[below left]{$0$};
\draw (2pt,4)--(-2pt,4) node[left]{$1$};

\draw[ultra thick] (0,1)--(0.5,1)--(3.5,3)--(4,3);

\draw (0,1) node[left]{$\alpha$};

\draw[densely dashed] (0.5,1) -- (0.5,0) node[below]{$\delta$}; 
\draw[densely dashed] (3.5,3) -- (3.5,0) node[below]{$1\!-\!\delta$}; 
\draw[densely dashed] (4,3) -- (4,0) node[below]{ $1$}; 
\draw[densely dashed] (3.5,3) -- (0,3) node[left]{$\beta$}; 
\end{tikzpicture}
\caption{Profile $\g$. Note that it depends on $\delta$, which is fixed and whose specific value does not play any role.}\label{fig3}
\end{figure}
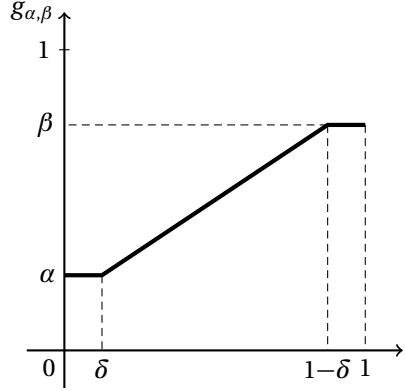
 
By $\nu_{\g(\cdot)}^n$ we denote the \textit{slow varying Bernoulli product measure}  on $\Omega_n$ with parameters given by the profile $\g$, that  is,
\begin{equation}\label{eq:initial_measure}
\nu_{\g(\cdot)}^n\big\{\eta\in \Omega_n\,:\, \eta(x) =1 \text{ for all }x\in D \big\}\;=\;\prod_{x\in D}\g\big(\pfrac{x}{n}\big)\,,\quad \forall\, D\subset \Sigma_n\,.
\end{equation}

\subsection{Entropy bounds and estimates on Dirichlet forms}
For a density function $f:\Omega_n \to [0,\infty)$ with respect to $\nu_{\g(\cdot)}^{n} $ we define
\begin{equation*}
D_{n}(\sqrt{f},\nu_{\g(\cdot)}^{n} )\;:=\; D_{n,0}(\sqrt{f},\nu_{\g(\cdot)}^{n} )+D_{n,b}(\sqrt{f},\nu_{\g(\cdot)}^{n} )\,,
\end{equation*}
where 
\begin{align}\label{eq:bulk_dir_form}
D_{n,0}(\sqrt{f},\nu_{\g(\cdot)}^{n} )& \; :=\; \sum_{x\in\Sigma_n}\, \Big\langle  1,\,\left(\sqrt {f(\eta^{x,x+1})}-\sqrt {f(\eta)}\right)^{2} \Big \rangle_{\nu_{\g(\cdot)}^{n}}\,,\\ 
D_{n,b}(\sqrt{f},\nu_{\g(\cdot)}^{n} )& \;:=\;\frac{1}{n^\theta}\!\!\!\!\sum_{x\in\{1,n-1\}} \!\!\!\! \Big\langle r_x(1-\eta(x))+(1-r_x)\eta(x),\left(\!\!\sqrt{f(\eta^{x})}-\sqrt {f(\eta)}\right)^{2} \!\Big\rangle_{\nu_{\g(\cdot)}^{n}}\label{eq:bound_dir_form}
\end{align} 
and $r_x$ has been already defined in \eqref{lnb}.
  Our first goal is to express a relationship between the Dirichlet form defined by $\langle \mc L_{n}\sqrt{f},\sqrt{f} \rangle_{\nu_{\g(\cdot)}^{n}}$ and $
D_{n}(\sqrt{f},\nu_{\g(\cdot)}^{n} )$. We claim that  
\begin{equation}\label{dir_est:ini_prof}
\begin{split}
\langle \mc L_{n}\sqrt{f},\sqrt{f} \rangle_{\nu_{\g(\cdot)}^n} & \;\lesssim\; -D_{n}(\sqrt{f},\nu_{\g(\cdot)}^n) + \sum_{x=1}^{n-1}\Big(\g(\pfrac{x+1}{n})-\g(\pfrac{x}{n})\Big)^2\\
& \;\lesssim\; -D_{n}(\sqrt{f},\nu_{\g(\cdot)}^n) + \frac{1}{n}\\
\end{split}
\end{equation}
The second inequality above is readily deduced from the definiton of $\g$. To prove the first inequality,   we recall the following lemma from \cite{BGJO,G_lec_not}.
\begin{lemma}
\label{lemmaleft0}
Let $T :   \Omega_n \to \Omega_n$ be a map  and let $c: \eta \to c(\eta)$ be a positive local function. Let $f$ be a density with respect to a probability  measure $\mu$ on $\Omega_n$. Then
\begin{align}
&\left\langle c (\eta) [ \sqrt{f(T (\eta))} -\sqrt{f(\eta)}]\; ,\;\sqrt{f (\eta)} \right\rangle_{\mu} \; \lesssim\;   -\int c (\eta)\left(\left[ \sqrt{f(T (\eta))}\right]-\left[ \sqrt{f(\eta)}\right]\right)^{2}  d\mu\notag \\
& + \int \dfrac{1}{c (\eta)}\left[c (\eta)-c (T(\eta)) \dfrac{\mu (T(\eta))}{\mu(\eta)} \right]^{2}\left(\left[ \sqrt{f(T(\eta))}\right]+\left[ \sqrt{f(\eta)}\right]\right)^{2}  d\mu\,.\label{use_comp}
\end{align}
\end{lemma}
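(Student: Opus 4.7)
The approach is a purely algebraic manipulation built on the elementary identity $(a-b)b=\tfrac12(a^2-b^2)-\tfrac12(a-b)^2$ applied with $a=\sqrt{f(T(\eta))}$ and $b=\sqrt{f(\eta)}$. Multiplying by $c(\eta)$ and integrating against $\mu$ gives
\begin{equation*}
\left\langle c(\eta)[\sqrt{f(T(\eta))}-\sqrt{f(\eta)}],\sqrt{f(\eta)}\right\rangle_\mu \;=\; \tfrac12\!\int\! c(\eta)[f(T(\eta))-f(\eta)]\,d\mu \;-\; \tfrac12\!\int\! c(\eta)\big(\sqrt{f(T(\eta))}-\sqrt{f(\eta)}\big)^{2}d\mu\,.
\end{equation*}
The second term on the right already supplies a negative multiple of the Dirichlet-form contribution appearing in \eqref{use_comp}, so the whole problem reduces to estimating the ``transport'' integral $\int c(\eta)[f(T(\eta))-f(\eta)]\,d\mu$.

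To handle it, I would use that the maps $T$ arising in all applications of the lemma (the bond exchange $\eta\mapsto\eta^{x,x+1}$ and the spin flip $\eta\mapsto\sigma^{x}\eta$) are involutions of the finite state space $\Omega_n$, so $\eta\mapsto T(\eta)$ is a bijection of the summation index and the change of variables gives
\begin{equation*}
\int c(\eta)\,f(T(\eta))\,d\mu \;=\; \int c(T(\eta))\,\tfrac{\mu(T(\eta))}{\mu(\eta)}\,f(\eta)\,d\mu\,.
\end{equation*}
Setting $B(\eta):=c(T(\eta))\mu(T(\eta))/\mu(\eta)-c(\eta)$, this rewrites $\int c(\eta)[f(T(\eta))-f(\eta)]\,d\mu=\int B(\eta)f(\eta)\,d\mu$. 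A direct computation from $T^{2}=\mathrm{id}$ shows that $B(T(\eta))\,\mu(T(\eta))/\mu(\eta)=-B(\eta)$, and applying the same change of variables once more yields the key antisymmetrization identity
\begin{equation*}
\int B(\eta)\,f(\eta)\,d\mu \;=\; \tfrac12\int B(\eta)\big[f(\eta)-f(T(\eta))\big]\,d\mu\,.
\end{equation*}

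The last step is to factor $f(\eta)-f(T(\eta))=(\sqrt{f(\eta)}-\sqrt{f(T(\eta))})(\sqrt{f(\eta)}+\sqrt{f(T(\eta))})$ and apply Young's inequality $|xy|\le\tfrac{A}{2}x^{2}+\tfrac{1}{2A}y^{2}$ with $x=\sqrt{c(\eta)}(\sqrt{f(T(\eta))}-\sqrt{f(\eta)})$ and $y=B(\eta)\,c(\eta)^{-1/2}(\sqrt{f(\eta)}+\sqrt{f(T(\eta))})$. For any fixed $A<4$ (say $A=2$), the positive multiple of $\int c(\eta)(\sqrt{f(T(\eta))}-\sqrt{f(\eta)})^{2}\,d\mu$ produced by Young is strictly smaller than the $\tfrac12\!\int\! c(\eta)(\sqrt{f(T(\eta))}-\sqrt{f(\eta)})^{2}\,d\mu$ coming from Step~1, so combining the two leaves a strictly negative multiple of the Dirichlet form plus a remainder bounded by a constant times $\int c(\eta)^{-1}B(\eta)^{2}(\sqrt{f(\eta)}+\sqrt{f(T(\eta))})^{2}d\mu$; with the constant absorbed by $\lesssim$, this is exactly \eqref{use_comp}. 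The one real subtlety is the hypothesis $T^{2}=\mathrm{id}$: without it, neither the change of variables nor the antisymmetry of $B$ is available and the inequality in its stated form could fail; for the maps used in \eqref{eq:bulk_dir_form} and \eqref{eq:bound_dir_form} this hypothesis is automatic, so the lemma applies verbatim.
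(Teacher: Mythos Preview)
Your proof is correct and is the standard argument for this inequality. The paper does not actually prove this lemma: it simply recalls it from the references \cite{BGJO,G_lec_not}, so there is no in-paper proof to compare against. Your derivation via the polarization identity $(a-b)b=\tfrac12(a^2-b^2)-\tfrac12(a-b)^2$, followed by the change of variables under the involution $T$ and the antisymmetrization of $B(\eta)$, is exactly how these estimates are obtained in the cited sources.

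You are also right to flag that the lemma, as stated, silently relies on $T$ being an involution (or at least a bijection with $T^2=\mathrm{id}$) of the finite state space $\Omega_n$; without it the change-of-variables step and the antisymmetry $B(T(\eta))\mu(T(\eta))/\mu(\eta)=-B(\eta)$ are not available. Since every application in the paper uses either $T(\eta)=\eta^{x,x+1}$ or $T(\eta)=\sigma^x\eta$, this hypothesis is always met.
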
 
As a consequence of the previous  lemma,  taking $\mu=\nu_{\g(\cdot)}^n$ we have that
\begin{equation*}
\begin{split}
&\left\langle \mc L_{n,0}\sqrt{f},\sqrt{f}\right \rangle_{\nu_{\g(\cdot)}^n}  \;\lesssim\;  -D_{n,0}(\sqrt{f},\nu_{\g(\cdot)}^n) + \sum_{x=1}^{n-1}\Big(\g(\tfrac xn)-\g(\tfrac {x+1}n)\Big)^2\\
& \left\langle \mc L_{n,b}\sqrt{f},\sqrt{f} \right\rangle_{\nu_{\g(\cdot)}^n}\; \lesssim\;  -D_{n,b}(\sqrt{f},\nu_{\g(\cdot)}^n) + \frac{1}{n^{\theta}}\Big\{\Big(\g(\tfrac 1n)-\alpha\Big)^2+\Big(\g(\tfrac {n-1}n)-\beta\Big)^2\Big\}
\end{split}
\end{equation*}
for any density $f$ with respect to $\nu_{\g(\cdot)}^n$. We leave of details of deriving the above inequalities  to the reader. 
We stress that the profile $\g(\cdot)$ is assumed to satisfy the conditions described below \eqref{eq:initial_measure}, hence the error 
  coming from the bulk dynamics is of order $O(\tfrac 1n)$. In the case $\theta\in (1,+\infty)$ we do not need to impose any extra condition on the profile $\g(\cdot)$ in order to have  the bound given by \eqref{dir_est:ini_prof} since the factor $\tfrac {1}{n^\theta}$  is enough to control this term. On the other hand, the case $\theta\in(0,1)$ indeed requires that the profile $\g(\cdot)$ is equal to $\alpha$ (resp. $\beta$) at $0$ (resp. $1$) and locally constant in a neighborhood of the boundary so that we can control the error. 
\subsection{Replacement lemmas and energy estimates}
In this section we prove  the replacement lemmas required  to write down the Radon-Nikodym derivative as a function of the empirical measure as well some energy estimates.  
Before we proceed we introduce the notion of the  empirical average on a box around $x$. 	By abuse of notation, let   $\eps n$ denotes $\lfloor \eps n\rfloor$, the integer part of $\eps n$.
\begin{definition} For any $x\in \Sigma_n$  and $\eps> 0$ that satisfy $x+\eps n\in \Sigma_n$ we denote by $\eta^{\eps n}(x)$  the centred average  on a box of size $\eps n$ situated to the right or to the  left of the site $x \in \Sigma_n$, that is,
	\begin{numcases}{\eta^{\eps n}(x)=}
		\frac{1}{\eps n}\sum_{z=x+1}^{x+\eps n}\eta(z)\,,& if  $ x\in\{1,\dots,n-1-\eps n\}\,,$\nonumber\\
		\frac{1}{\eps n}\sum_{z=x-\eps n}^{x-1}\eta(z)\,,& if $ x\in\{n-1-\eps n, \dots, n-1\}\,.$\label{eq:emp_average}
		\end{numcases}
\end{definition}

\begin{lemma}\label{lemma:3.2}
Let   $\psi=\psi_{x,\eps,n}:\Omega_n\to\bb R$ be a uniformly bounded function on $n$ and $\eps$ which is invariant for the map $\eta\mapsto \eta^{y,y+1}$ for any $y \in\{x+1,\ldots, x+\eps n\}$, that is, $\psi(\eta)=\psi(\eta^{y,y+1})$ for any $y \in\{x+1,\ldots, x+\eps n\}$. Then, for any density $f$ with respect to $\nu_{\g(\cdot)}^n$, for any $n\geq 1$, for any $\varepsilon>0$ and for  any positive constant $A$, it holds that
\begin{equation*}
\left\vert \left\langle\psi(\eta) \big[\eta(x)-\eta^{\eps n}(x)\big],f\right\rangle_{\nu_{\g(\cdot)}^{n}} \right\vert \; \lesssim\; \tfrac{1}{A}  D_n(\sqrt{f},\nu_{\g(\cdot)}^{n})+ A\varepsilon n+\varepsilon\,.
\end{equation*}
\end{lemma}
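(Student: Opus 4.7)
The plan is to combine a telescopic rewriting of $\eta(x)-\eta^{\eps n}(x)$ into discrete gradients $\eta(z)-\eta(z+1)$ with the classical change-of-variables ``exchange trick'' that produces the Dirichlet form. First I would write, after summing the identity $\eta(x)-\eta(y)=\sum_{z=x}^{y-1}(\eta(z)-\eta(z+1))$ over $y=x+1,\dots,x+\eps n$ and switching the order of summation,
\begin{equation*}
\eta(x)-\eta^{\eps n}(x) \;=\; \frac{1}{\eps n}\sum_{z=x}^{x+\eps n -1}(x+\eps n -z)\,\big[\eta(z)-\eta(z+1)\big].
\end{equation*}
Thus the left-hand side of the lemma is controlled by a weighted sum, indexed by $z$, of terms $\langle \psi(\eta)[\eta(z)-\eta(z+1)],f\rangle_{\nu_{\g(\cdot)}^n}$, with weights bounded by $1$.

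For each $z\in\{x+1,\dots,x+\eps n-1\}$, the invariance hypothesis $\psi(\eta)=\psi(\eta^{z,z+1})$ applies, so the substitution $\eta\leftrightarrow\eta^{z,z+1}$ gives
\begin{equation*}
\langle \psi(\eta(z)-\eta(z+1)),f\rangle_{\nu_{\g(\cdot)}^n}
\;=\;\tfrac{1}{2}\big\langle \psi(\eta)(\eta(z)-\eta(z+1))\big[f(\eta)-f(\eta^{z,z+1})R_z(\eta)\big],1\big\rangle_{\nu_{\g(\cdot)}^n},
\end{equation*}
where $R_z(\eta)=\nu_{\g(\cdot)}^n(\eta^{z,z+1})/\nu_{\g(\cdot)}^n(\eta)$. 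I would then split $f-f^{z,z+1}R_z=(f-f^{z,z+1})+f^{z,z+1}(1-R_z)$. The first piece, factored as $(\sqrt{f}-\sqrt{f^{z,z+1}})(\sqrt{f}+\sqrt{f^{z,z+1}})$, is handled by Young's inequality $ab\le \tfrac{a^2}{2A}+\tfrac{Ab^2}{2}$ together with the change-of-variables identity $\int f^{z,z+1}d\nu_{\g(\cdot)}^n=\int fR_zd\nu_{\g(\cdot)}^n \lesssim 1$; this produces the edge contribution $\tfrac{1}{A}D_{n,0}^{(z)}(\sqrt f)+CA$. The second piece, via the crucial estimate $|1-R_z(\eta)|=O(1/n)$ stemming from the Lipschitz character of $\g$ and the fact that $\g$ is bounded away from $0$ and $1$, contributes $O(1/n)$.

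Summing over $z$ with weights $(x+\eps n-z)/\eps n\le 1$, the Dirichlet pieces telescope to $\tfrac{1}{A}\sum_z D_{n,0}^{(z)}(\sqrt f)\le \tfrac{1}{A}D_n(\sqrt f,\nu_{\g(\cdot)}^n)$; the constant-$A$ pieces aggregate to $CA\cdot \tfrac{1}{\eps n}\sum_{z}(x+\eps n-z)=O(A\eps n)$; and the $O(1/n)$ remainders aggregate to $O(\eps)$. This reproduces exactly the claimed bound. The endpoint $z=x$ is not covered by the invariance hypothesis, but the same manipulation applies after the adding-and-subtracting of $\psi(\eta^{x,x+1})f(\eta^{x,x+1})R_x(\eta)$; the extra term $\int(\eta(x)-\eta(x+1))[\psi(\eta)-\psi(\eta^{x,x+1})]f^{x,x+1}R_x\,d\nu_{\g(\cdot)}^n$ is $O(1)$ by boundedness of $\psi$ and $\int f^{x,x+1}R_x\,d\nu_{\g(\cdot)}^n=1$, hence absorbed into $A\eps n$.

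The main obstacle is not the telescoping nor the Young step, but the uniform control of the Radon–Nikodym ratio $R_z$ on the non-product-looking but slowly varying Bernoulli reference $\nu_{\g(\cdot)}^n$: one must use that $\g$ is Lipschitz with values in a compact subset of $(0,1)$ to certify $R_z=1+O(1/n)$ independently of $\eta$. This is precisely the role of the specific choice of $\g$ depicted in Figure~\ref{fig3}, and it is what produces the small residual $\eps$ term in the statement.
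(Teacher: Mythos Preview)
Your approach is essentially the same as the paper's. Both proofs telescope $\eta(x)-\eta^{\eps n}(x)$ into bond gradients, apply the exchange $\eta\leftrightarrow\eta^{z,z+1}$, use Young's inequality to extract the Dirichlet piece $\tfrac{1}{A}D_n$ and the $A\eps n$ piece, and control the remainder via the Lipschitz property of $\g$ (which gives $|1-R_z|=O(1/n)$, equivalently $|\g(\tfrac{z+1}{n})-\g(\tfrac{z}{n})|=O(1/n)$). The only cosmetic differences are that you switch the order of summation to get a single weighted sum, and you introduce the Radon--Nikodym ratio $R_z$ explicitly before splitting, whereas the paper first writes the trivial algebraic identity $f=\tfrac12(f-f^{z,z+1})+\tfrac12(f+f^{z,z+1})$ and only then invokes the change of variables on the symmetric part.

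One remark on your endpoint discussion: the claim that the extra $O(1)$ term at $z=x$ is ``absorbed into $A\eps n$'' is not correct as stated, since in the intended application $A=C/n$ makes $A\eps n$ small while an $O(1)$ error would survive and blow up after sending $C\to\infty$. What is really going on is that the hypothesis range $y\in\{x+1,\dots,x+\eps n\}$ in the lemma is off by one from what the telescoping actually uses, namely $z\in\{x,\dots,x+\eps n-1\}$; the paper's own proof tacitly assumes the invariance also at $z=x$ (and does not use $y=x+\eps n$). In the actual applications the function $\psi$ is invariant under $\eta^{x,x+1}$ as well, so this is a statement-level typo rather than a gap in the argument.
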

 \begin{proof}
We present the proof only for the case  $x\in\{1,..., n-1-\eps n\}$ since  the remaining case is  analogous.  Note that 
 \begin{equation*}
\eta(x)-\eta^{\eps n}(x)\;=\;\frac{1}{\eps n}\sum _{y=x+1}^{x+\eps n} \sum_{z=x}^{y-1} \eta(z)-\eta(z+1)\,.
\end{equation*}
By writing the term  $\eta(z)-\eta(z+1)$ as twice its half and performing the change of variables $\eta$ into $\eta^{z,z+1}$, for each $z$, we have that
\begin{equation*}
\begin{split} 
 \left\langle\psi(\eta)\big(\eta(z+1)-\eta(z)\big), f(\eta)\right\rangle_{\nu_{\g(\cdot)}^{n}}  \;=\;& \dfrac{1}{2} \left\langle\psi(\eta)\big(\eta(z+1)-\eta(z)\big),f(\eta)-f( \eta^{z, z+1})\right\rangle_{\nu_{\g(\cdot)}^{n}} \\
+&\dfrac{1}{2} \left\langle\psi(\eta)\big(\eta(z+1)-\eta(z)\big),f(\eta)+f(\eta^{z, z+1})\right\rangle_{\nu_{\g(\cdot)}^{n}}.
\end{split}
\end{equation*}
By using  that for any $a,b \ge 0$, $(a-b)=(\sqrt a -\sqrt b)(\sqrt a +\sqrt b)$, from Young's inequality, we have, for any positive constant $A$, that 
\begin{align}
 \Big\vert \left\langle\psi(\eta) \big(\eta(x)-\eta^{\eps n}(x)\big),f\right\rangle_{\nu_{\g(\cdot)}^{n}}\Big\vert 
& \lesssim \;   \dfrac{A}{ \eps n} \sum _{y=x+1}^{x+\eps n} \sum _{z=x}^{y-1}\left\langle\big(\eta(z+1)-\eta(z)\big)^2,\Big(\sqrt {f(\eta)}+\sqrt {f(\eta^{z, z+1})}\Big)^2\right\rangle_{\nu_{\g(\cdot)}^{n}} \notag\\
&+\dfrac{1 }{A\eps n}\sum _{y=x+1}^{x+\eps n}  \sum _{z=x}^{y-1} \left\langle 1,\Big(\sqrt {f(\eta)}-\sqrt {f( \eta^{z, z+1})}\Big)^2\right\rangle_{\nu_{\g(\cdot)}^{n}}\notag \\
&+\dfrac{1}{\eps n} \sum _{y=x+1}^{x+\eps n} \left| \sum _{z=1}^{y-1}\left\langle \psi(\eta)\big(\eta(z+1)-\eta(z)\big) ,f(\eta)+f(\eta^{z, z+1})\right\rangle_{\nu_{\g(\cdot)}^{n}} \right|.\label{eq:term1}
\end{align}
Note  that the second term on the right hand side of last display  is bounded from above by\break $\frac 1A D_n (\sqrt f,\nu_{\g(\cdot)}^n) $. Since there is at most a particle per site and since $f$ is a density, the first term at the right hand side of last display is bounded from above by $A\epsilon n$. Finally, to estimate  the third term on the right hand side of last display, we note that, since $\g(\cdot)$ is Lipschitz and there is at most a particle per site,  it is not complicated to show that 
\begin{equation*}
\begin{split}
&\sum_{z=1}^{y-1}
\left\vert \left\langle \psi(\eta)\big(\eta(z+1)-\eta(z)\big) ,f(\eta)+f(\eta^{z, z+1})\right\rangle_{\nu_{\g(\cdot)}^{n}}\right\vert\; \lesssim \; \sum_{z=1}^{y-1}\Big|\g\big(\tfrac{z+1}{n}\big)-\g\big(\tfrac{z}{n}\big)\Big|\;\lesssim \;y/n\,,
\end{split}
\end{equation*}
from where the proof ends. 
\end{proof}

In what follows $\varphi^n$ is a sequence of functions in $C^{0,0}$ with  uniformly bounded supremum norm. Define, for all $\theta\geq 0$,
\begin{equation}\label{eq:V_0}
V_{\eps,0}^{\theta,\varphi^n}(\eta_s,s)\;=\;\frac{1}{n}\sum_{x=1}^{n-2}\varphi_s^n({u_x})\bigg\{\frac{\big(\eta_s(x)-\eta_s(x+1)\big)^2}{2}-\chi\big(\eta_s^{\eps n}(x)\big)\bigg\}\,,
\end{equation}
where $\chi$ is the static compressibility of the system defined in Subsection~\ref{sub2.5}.
Although this expression does not depend on $\theta$, we keep $\theta$ in  the notation to make short some statements in the sequel.
For $ x\in\{1,n-1\}$, let
\begin{equation} \label{eq:V_theta}
V_{\eps,x}^{\theta,\varphi^n}(\eta_s,s)\;=\;\begin{cases}
\varphi^n_s(u_x)\,\big[\eta_s(x)-r_x\big]\,,& \mbox{ if } \theta\in(0,1)\,,\\
\varphi^n_s(u_x)\,\big[\eta_s(x)-\eta_s^{\eps n}(x)\big]\,,& \mbox{ if } \theta\in(1,+\infty)\,,\\
\end{cases}
\end{equation}
where $r_1=\alpha$, $r_{n-1}=\beta$ and $\eta^{\eps n} (x)$ was defined in \eqref{eq:emp_average}.

 \begin{proposition}\label{lem:rep_lem_sup_p}
For any  $t\in[0,T]$, any  $\theta\geq 0$  and any    $ x=0,1,n-1$, we have that
\begin{equation*}
\varlimsup _{\varepsilon \downarrow 0}\varlimsup _{n\rightarrow \infty}\frac 1n\log \bb P_{\nu^n_{\g(\cdot)}}\left[\Big|\int_0^tV_{\eps,x}^{\theta,\varphi^n}(\eta_s,s)\, ds\Big|>\delta\right] \;=\;-\infty\,,
\end{equation*}
for all $\delta>0$. 
\end{proposition}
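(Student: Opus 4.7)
The strategy is the classical super-exponential Feynman--Kac approach. Applying Markov's exponential inequality with parameter $n\gamma$ together with $e^{|y|}\leq e^{y}+e^{-y}$, then Feynman--Kac and the variational formula for the largest eigenvalue of the (symmetrized) tilted generator, the estimate reduces to showing, for every $\gamma>0$ and each sign $\pm$, that
\begin{equation*}
\limsup_{\eps\downarrow 0}\limsup_{n\to\infty}\frac{1}{n}\sup_{s\in[0,t]}\sup_f\Bigl\{\pm n\gamma\,\bigl\langle V_{\eps,x}^{\theta,\varphi^n}(\cdot,s),f\bigr\rangle_{\nu^n_{\g(\cdot)}}+n^2\,\bigl\langle\mc L_n\sqrt f,\sqrt f\bigr\rangle_{\nu^n_{\g(\cdot)}}\Bigr\}\;\leq\;0\,,
\end{equation*}
where $f$ ranges over densities with respect to $\nu^n_{\g(\cdot)}$; the outer $-\gamma\delta$ term from Chebyshev then delivers the super-exponential decay upon letting $\gamma\to\infty$. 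Estimate \eqref{dir_est:ini_prof} replaces the generator term by $-c\,n^2 D_n(\sqrt f,\nu^n_{\g(\cdot)})+O(n)$, so the task becomes to bound $\pm n\gamma\langle V_{\eps,x}^{\theta,\varphi^n},f\rangle$ by $c\,n^2 D_n(\sqrt f,\nu^n_{\g(\cdot)})+o(n)$, uniformly in $s$ and in the density $f$.

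\textbf{Bulk case ($x=0$).} I would write $\tfrac{1}{2}(\eta(x)-\eta(x+1))^2=\tfrac{1}{2}[\eta(x)+\eta(x+1)]-\eta(x)\eta(x+1)$, using $\eta(y)^2=\eta(y)$. Since $\psi(\eta):=\varphi_s^n(u_x)$ is deterministic and hence invariant under every bulk exchange, Lemma~\ref{lemma:3.2} replaces $\eta(x)$ and $\eta(x+1)$ by the local average $\eta^{\eps n}(x)$; the quadratic term $\eta(x)\eta(x+1)$ is handled by the analogous iterated one-block argument (applied first to the right factor, against a slightly shifted box, then to the left), producing $(\eta^{\eps n}(x))^2$ with errors of the same order. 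Together these give $\eta^{\eps n}(x)-(\eta^{\eps n}(x))^2=\chi(\eta^{\eps n}(x))$, cancelling the subtracted compressibility inside $V_{\eps,0}^{\theta,\varphi^n}$. Summing over $x$ with weight $1/n$, the total cost is $\tfrac{1}{A}D_n(\sqrt f,\nu^n_{\g(\cdot)})+A\eps+\eps$; multiplying by $n\gamma$, subtracting $c\,n^2 D_n$ and choosing $A=\gamma/(cn)$ cancels the Dirichlet-form contribution and leaves a residual $\lesssim\eps(\gamma^{2}+n\gamma)$, which divided by $n$ is $O(\gamma\eps)$ and vanishes as $\eps\downarrow 0$.

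\textbf{Boundary cases ($x=1,n-1$).} For $\theta\in(1,+\infty)$, $V_{\eps,x}^{\theta,\varphi^n}=\varphi_s^n(u_x)[\eta(x)-\eta^{\eps n}(x)]$ is exactly of the form handled by Lemma~\ref{lemma:3.2}, and the same balance, using only the bulk Dirichlet form $D_{n,0}$, closes the estimate. For $\theta\in(0,1)$ I need to replace $\eta(1)$ by $\alpha$ (symmetrically $\eta(n-1)$ by $\beta$); by the construction of $\g$, the marginal of $\nu^n_{\g(\cdot)}$ at site $1$ is exactly $\text{Bernoulli}(\alpha)$. Applying Lemma~\ref{lemmaleft0} with $T=\sigma^1$ and $c(\eta)$ equal to the flip rate at site $1$, together with Young's inequality, yields a bound of the form
\begin{equation*}
\bigl|\bigl\langle\varphi_s^n(u_1)[\eta(1)-\alpha],f\bigr\rangle_{\nu^n_{\g(\cdot)}}\bigr|\;\lesssim\;\tfrac{1}{A'}\,n^{\theta}D_{n,b}(\sqrt f,\nu^n_{\g(\cdot)})+\tfrac{A'}{n^{\theta}}\,,
\end{equation*}
the factor $n^\theta$ arising because $D_{n,b}$ carries the built-in $n^{-\theta}$. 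Multiplying by $n\gamma$ and comparing against the $-c\,n^2 D_n$ piece (which contains $-c\,n^2 D_{n,b}$) forces $A'\gtrsim\gamma n^{\theta-1}$; the resulting residual is of order $\gamma^{2}n^{2\theta-2}$, which vanishes as $n\to\infty$ precisely because $\theta<1$.

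\textbf{Main obstacle.} The subcritical boundary step is the delicate one: it crucially exploits that, although the boundary rates are slowed by $n^{-\theta}$, the diffusive acceleration $n^2$ still delivers $n^{2-\theta}$ units of effective dissipation there, strictly more than the variational cost $n\gamma$, and only because $\theta<1$. This margin collapses exactly at $\theta=1$ (which is why the critical case is postponed to future work) and flips sign for $\theta>1$, which is precisely why in the supercritical regime one is forced to replace $\eta(1)$ by a bulk average rather than by the reservoir density $\alpha$.
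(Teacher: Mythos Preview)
Your proposal is correct and follows exactly the same route as the paper: exponential Chebyshev, Feynman--Kac/variational formula, replacing the generator term by the Dirichlet form via \eqref{dir_est:ini_prof}, then balancing the integrand against $n D_n$ by means of Lemma~\ref{lemma:3.2} (or its boundary analogue based on Lemma~\ref{lemmaleft0}), and finally sending the free constant to infinity.

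The paper's write-up is considerably terser than yours: it treats the three cases $x=0,1,n-1$ uniformly, citing only Lemma~\ref{lemma:3.2} for the key estimate and leaving to the reader both the reduction of the bulk term $\tfrac12(\eta(x)-\eta(x+1))^2-\chi(\eta^{\eps n}(x))$ to iterated applications of that lemma, and the separate argument needed at the boundary for $\theta\in(0,1)$. Your observation that the latter case must invoke the boundary piece $D_{n,b}$ of the Dirichlet form, together with the fact that $\g$ is locally constant equal to $\alpha$ (resp.\ $\beta$) so that the change-of-variables $\eta\mapsto\sigma^1\eta$ produces no measure error, is precisely the mechanism the paper leaves implicit. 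Your explanation of why the balance closes exactly when $\theta<1$ (effective boundary dissipation $n^{2-\theta}$ versus cost $n\gamma$) is the right heuristic. One minor arithmetical point: with your bound $|\langle\cdot,f\rangle|\lesssim (A')^{-1}n^\theta D_{n,b}+A'$ and the choice $A'\sim\gamma n^{\theta-1}$, the residual after the factor $1/n$ is of order $\gamma^2 n^{\theta-1}$ rather than $\gamma^2 n^{2\theta-2}$, but this still vanishes for $\theta<1$, so nothing in the argument changes.
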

\begin{proof}
Note that, for $a_n\to+\infty$ and $b_n,c_n>0$, 
\begin{equation}\label{sum_log_super}
\varlimsup_{n\to+\infty} \frac{1}{a_n}\log(b_n+c_n)\;=\;\max\Big\{\varlimsup_{n\to+\infty} \frac{1}{a_n}\log b_n , \varlimsup_{n\to+\infty} \frac{1}{a_n}\log c_n\Big\}\,.
\end{equation}
Using this fact, in order to prove \eqref{eq38super} it is enough to show that estimate  without the absolute value.  By the exponential Chebychev's  inequality, this  probability (without the absolute value) is bounded from above by 
\begin{equation*}
\exp\{-C\delta n\} \,\bb E_{\nu^n_{\g(\cdot)}}\left[\exp\Big\{Cn\int_0^tV_{\eps,x}^{\theta,\varphi^n}(\eta_s,s)\, ds\Big\}\right] \,,
\end{equation*}
for   any  $C>0$.  From Feynman-Kac's formula, last expectation is  bounded from above by 

\begin{equation*} 
\exp\bigg\{\int_0^t \sup _{f}\Big\{\langle Cn V_{\eps,x}^{\theta,\varphi^n}(\eta,s),f\rangle_{\nu^{n}_{\g(\cdot)}}   +n^2\langle \mc L_{n}\sqrt{f},\sqrt{f} \rangle_{\nu_{\g(\cdot)}^n} \Big\}\, ds\bigg\}\,,
\end{equation*}
where the supremum is carried over all the densities $f$ with respect to $\nu_{\g(\cdot)}^n$. 
Up to here we have  
\begin{align}
\frac 1n\log \bb P_{\nu^n_{\g(\cdot)}}&\left[\Big|\int_0^tV_{\eps,x}^{\theta,\varphi^n}(\eta_s,s)\, ds\Big|>\delta\right]\notag\\&\leq\; -C\delta + \int_0^t \sup _{f}\Big\{\langle CV_{\eps,x}^{\theta,\varphi^n}(\eta,s),f\rangle_{\nu^{n}_{\g(\cdot)}}   +n\langle \mc L_{n}\sqrt{f},\sqrt{f} \rangle_{\nu_{\g(\cdot)}^n} \Big\}\, ds\,.\label{eq311}
\end{align}

Due to \eqref{dir_est:ini_prof}, the last expression is bounded from above by  a constant times 
\begin{equation} 
-C\delta + \int_0^t \sup _{f}\Big\{\langle C V_{\eps,x}^{\theta,\varphi^n}(\eta,s),f\rangle_{\nu^{n}_{\g(\cdot)}}   -  n D_{n}(\sqrt{f},\nu_{\g(\cdot)}^n)+1 \Big\}\, ds\,.
\end{equation}
The next step is to obtain a relationship between the two first parcels inside  the supremum above, which has been provided in  Lemma~\ref{lemma:3.2}. Last display can be  bounded from above by
\begin{equation} 
-C\delta + t\, \sup _{f}\Big\{\frac{C}{A}  D_n(\sqrt{f},\nu_{\g(\cdot)}^{n})+ CA\varepsilon n+C\varepsilon -  n D_{n}(\sqrt{f},\nu_{\g(\cdot)}^n)+1\Big\}\,.
\end{equation}
Choosing $A=\tfrac Cn $ on the previous expression, we get $ -C\delta  +t (\varepsilon C^2 +\varepsilon C+1)$, so  taking $\varepsilon \to 0$, we get $ -C\delta  +t$. And then $C\to+\infty$  concludes  the proof, because   $t\in [0,T]$ and $\delta>0$ are fixed.
\end{proof}

In possess of the previous results, it is a standard procedure to derive the (superexponential) energy as written below. One can  follow the arguments as in \cite{FN2017}, for instance.    
\begin{proposition}\label{-l_p} For a function  $H\in 
	C^{0,1}_c$ and $\ell\in \bb R$ fixed, the following inequality holds:
	\begin{equation*}
	\varlimsup_{\eps\downarrow 0}\varlimsup_{n\to \infty}\pfrac{1}{n}
	\log \bb P_{\nu_{\g(\cdot)}^n}\Big[\,\mc E_H(\pi^n*\ioe)\geq \ell\,\Big]\;\leq\; -\ell\,.
	\end{equation*} 
\end{proposition}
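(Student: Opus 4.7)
The plan is to follow the standard exponential-Chebyshev plus Feynman--Kac strategy for superexponential energy estimates. By the exponential Chebyshev inequality,
\begin{equation*}
\frac{1}{n}\log \bb P_{\nu^n_{\g(\cdot)}}\big[\mc E_H(\pi^n*\ioe)\geq \ell\big]\;\leq\;-\ell+\frac{1}{n}\log \bb E_{\nu^n_{\g(\cdot)}}\big[e^{\,n\mc E_H(\pi^n*\ioe)}\big],
\end{equation*}
so the task reduces to showing that the exponential moment on the right is $o(n)$ in the $\varlimsup_\eps\varlimsup_n$ sense. The quadratic subtraction $-2\|H\|^2_{L^2(0,T;(0,1))}$ built into $\mc E_H$ is tailored precisely to absorb the cost coming from the Feynman--Kac variational problem.

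Since $n\<\p_u H_s,\pi^n_s*\ioe\>$ is a bounded function of $\eta$ (because $H$ and $\ioe$ are fixed), Feynman--Kac gives
\begin{equation*}
\frac{1}{n}\log \bb E_{\nu^n_{\g(\cdot)}}\big[e^{n\<\!\<\p_u H,\pi^n*\ioe\>\!\>}\big]\;\leq\;\int_0^T \sup_{f}\Big\{\<\<\p_u H_s,\pi^n_s*\ioe\>,f\>_{\nu^n_{\g(\cdot)}}+n\<\mc L_n\sqrt f,\sqrt f\>_{\nu^n_{\g(\cdot)}}\Big\}\,ds,
\end{equation*}
the supremum being taken over densities $f$ with respect to $\nu^n_{\g(\cdot)}$. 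By the Dirichlet-form bound \eqref{dir_est:ini_prof}, the generator term is majorised by $-D_n(\sqrt f)+O(1/n)$, the boundary error being negligible after dividing by $n$.

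For the linear-in-$f$ term I would perform a discrete Abel summation, writing $(\pi^n_s*\ioe)(u)$ pointwise as a (two-sided) box average of $\eta$ and moving the spatial derivative off $H$, so that $n\<\p_u H_s,\pi^n_s*\ioe\>$ becomes a bond-sum of $H_s(x/n)\,[\eta(x+1)-\eta(x)]$ up to smoothing and boundary errors that vanish as $n\to\infty$. For each bond I would apply Lemma \ref{lemmaleft0} to the swap map $\eta\mapsto\eta^{x,x+1}$, together with the factorisation $a-b=(\sqrt a-\sqrt b)(\sqrt a+\sqrt b)$ and Young's inequality, choosing the Young constant so that the cross term is exactly absorbed by $D_n(\sqrt f)$. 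The surviving quadratic remainder is proportional to $\tfrac{1}{n}\sum_x H_s(x/n)^2\,(\eta(x+1)-\eta(x))^2$.

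The last step is to invoke Proposition \ref{lem:rep_lem_sup_p} applied to $V^{\theta,\varphi^n}_{\eps,0}$ with $\varphi^n_s=H_s^2$, which replaces $\tfrac12(\eta(x+1)-\eta(x))^2$ by the compressibility $\chi(\eta^{\eps n}(x))\leq 1/4$ at the superexponential level; in the limit $n\to\infty$ and then $\eps\to 0$ the quadratic remainder is controlled by $2\<\!\<H^2,\chi(\rho)\>\!\>\leq 2\|H\|^2_{L^2}$, which exactly cancels the subtraction in $\mc E_H$. The main technical point I foresee is the calibration of Young's constant so that the cross term is cleanly absorbed by $D_n(\sqrt f)$ while, simultaneously, the prefactor on the quadratic remainder matches the constant $2$ in the definition of $\mc E_H$; the remaining details, including the handling of boundary contributions coming from $\g$ being only locally constant near $0$ and $1$, follow routinely along the lines of \cite{FN2017}.
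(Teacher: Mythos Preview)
Your overall strategy—exponential Chebyshev, Feynman--Kac, discrete summation by parts, and a bond-wise Young inequality balanced against the Dirichlet form—is exactly the standard route the paper has in mind (it gives no proof and simply refers to \cite{FN2017}). The steps up through the Young inequality are correctly identified.

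Your last step, however, reflects a genuine conceptual slip. After Feynman--Kac the object to bound, for each fixed $s$, is
\[
\sup_{f}\Big\{\big\langle \textstyle\sum_{x} H_s(\tfrac{x}{n})[\eta(x)-\eta(x+1)],\, f\big\rangle_{\nu^n_{\g(\cdot)}} - c\,n\,D_n(\sqrt f,\nu^n_{\g(\cdot)})\Big\},
\]
a variational problem over \emph{densities} $f$. The Young step produces, bond by bond, a remainder of the form
\[
A\, H_s(\tfrac{x}{n})^2\,\big\langle (\eta(x)-\eta(x+1))^2(\sqrt{f}+\sqrt{f^{x,x+1}})^2\big\rangle_{\nu^n_{\g(\cdot)}},
\]
which is an average against the arbitrary density $f$, not a functional of the trajectory $\eta_\cdot$. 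Proposition~\ref{lem:rep_lem_sup_p} is a statement about trajectories under $\bb P_{\nu^n_{\g(\cdot)}}$ and cannot be invoked inside this supremum over $f$. The correct closure is simpler than what you propose: bound $(\eta(x)-\eta(x+1))^2\le 1$ and $(\sqrt f+\sqrt{f'})^2\le 2(f+f')$, use that $f$ is a density together with the near-swap-invariance of $\nu^n_{\g(\cdot)}$, and obtain a remainder $\lesssim A\sum_x H_s(\tfrac{x}{n})^2\approx A n\|H_s\|_{L^2}^2$. Choosing $A$ of order $1/n$ so that the Dirichlet contribution is absorbed leaves a remainder of order $\|H_s\|_{L^2}^2$, which is precisely what the subtraction $-2\|H\|^2_{L^2(0,T;(0,1))}$ in $\mc E_H$ is designed to offset. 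No replacement lemma enters; Proposition~\ref{lem:rep_lem_sup_p} is used elsewhere in the paper (for the Radon--Nikodym derivative, where $\chi^n_s$ genuinely appears), not in the energy estimate.
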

\begin{corollary} \label{cor_super_energy_p} For $k\in\bb N$, for  functions  $\{H_j\}_{1\leq j\leq k}$ in $ 
	C^1_c$, and   $\ell\in \bb R$ fixed, we have
\begin{equation*}
\varlimsup_{\eps\downarrow 0}\varlimsup_{n\to\infty}\pfrac 1n \log
\bb P_{\nu_{\g(\cdot)}^n}\Big[\max_{1\leq j\leq k}\mc E_{H_j}\big(\pi^n*\ioe \,\big)
\;\geq\; \ell\,\Big]\;\leq\; -\ell\,.
\end{equation*}
\end{corollary}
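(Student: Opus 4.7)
The plan is to reduce the maximum over a \emph{fixed finite} family $\{H_j\}_{1\le j\le k}$ to the single-function statement already provided by Proposition~\ref{-l_p}, via a union bound followed by the elementary logarithmic identity that was recorded in the proof of that same proposition.

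First, I would observe the crude inclusion of events
\begin{equation*}
\Big\{\max_{1\le j\le k}\mc E_{H_j}(\pi^n*\ioe)\geq \ell\Big\}\;=\;\bigcup_{j=1}^{k}\Big\{\mc E_{H_j}(\pi^n*\ioe)\geq \ell\Big\},
\end{equation*}
so that by a union bound
\begin{equation*}
\bb P_{\nu_{\g(\cdot)}^n}\!\Big[\max_{1\le j\le k}\mc E_{H_j}(\pi^n*\ioe)\geq \ell\Big]\;\leq\;\sum_{j=1}^{k}\bb P_{\nu_{\g(\cdot)}^n}\!\Big[\mc E_{H_j}(\pi^n*\ioe)\geq \ell\Big].
\end{equation*}
Since $k$ does not depend on $n$, the logarithmic sum identity \eqref{sum_log_super}, extended by straightforward induction to any finite number of summands, gives
\begin{equation*}
\varlimsup_{n\to\infty}\pfrac 1n\log\!\left(\sum_{j=1}^{k}\bb P_{\nu_{\g(\cdot)}^n}\!\Big[\mc E_{H_j}(\pi^n*\ioe)\geq \ell\Big]\right)\;=\;\max_{1\le j\le k}\,\varlimsup_{n\to\infty}\pfrac 1n\log\bb P_{\nu_{\g(\cdot)}^n}\!\Big[\mc E_{H_j}(\pi^n*\ioe)\geq \ell\Big].
\end{equation*}

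Next, I would take the outer $\varlimsup_{\eps\downarrow 0}$; since the maximum is over finitely many $j$, it commutes with the $\varlimsup_{\eps\downarrow 0}$. Applying Proposition~\ref{-l_p} to each $H_j$ separately (each $H_j$ lies in the class for which the single-function superexponential energy bound holds), each of the $k$ iterated $\varlimsup$'s is bounded from above by $-\ell$. Taking the maximum of $k$ quantities each at most $-\ell$ yields $-\ell$, which is exactly the claim.

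The only real ``step'' is the verification of the iterated logarithmic identity for finite sums, which is immediate from \eqref{sum_log_super} by induction on $k$; there is no genuine obstacle here, since the finiteness of the family prevents $k$ from absorbing any exponential factor in $n$. Had the statement involved a number of functions $k=k(n)$ growing with $n$, one would instead need to control the growth of $k(n)$ against the exponential rate, but that is not the case in this corollary.
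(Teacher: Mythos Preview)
Your argument is correct and is exactly the standard reduction the paper has in mind: the paper does not spell out a proof of this corollary, merely indicating that it follows from Proposition~\ref{-l_p} by the standard procedure (with a reference to \cite{FN2017}), and your union bound together with \eqref{sum_log_super} is precisely that procedure. The only minor remark is that the inequality $\varlimsup_{\eps\downarrow 0}\max_{1\le j\le k}(\cdot)\le \max_{1\le j\le k}\varlimsup_{\eps\downarrow 0}(\cdot)$, which you invoke when passing the outer $\varlimsup$ through the finite maximum, deserves the one-line justification via pigeonhole on the maximizing index along a subsequence; but this is elementary and does not affect the validity of your proof.
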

We can now move towards super-exponential replacement lemmas for the system starting from the configuration $\eta^n$ associated to the profile $\gamma$. Since 
\begin{align*}
 \frac{\dradon\bb P_{\delta_{\eta^n}}}{\dradon\bb P_{\nu_{\g(\cdot)}^n}} \;=\;
 \frac{\dradon\delta_{\eta^n}}{\dradon\nu_{\g(\cdot)}^n}\;=\;\one_{\eta^n}(\eta)\prod_{x=1}^{n-1}\Big(\g\big(\pfrac{x}{n}\big)\Big)^{\eta(x)}\Big(1-\g\big(\pfrac{x}{n}\big)\Big)^{1-\eta(x)}\,,
 \end{align*}
 we deduce that there exists a constant $c_{\alpha,\beta}>0$ such that 
 \begin{align*}
 \bigg\vert\frac{\dradon\bb P_{\delta_{\eta^n}}}{\dradon\bb P_{\nu_{\g(\cdot)}^n}}\bigg\vert \;\leq\; e^{c_{\alpha,\beta}n}\,.
 \end{align*}
From the  inequality above,  we have  $
\bb P_{\delta_{\eta^n}}[\cdot]\leq \exp\{c_{\alpha,\beta} n\} \bb P_{\nu_{\g(\cdot)}^n}[\cdot]$. Then, from Proposition~\ref{lem:rep_lem_sup_p}, Proposition~\ref{-l_p} and Corollary~\ref{cor_super_energy_p} we obtain the analogous  results when the system starts from $\eta^n$, that is:
 \begin{proposition} \label{lem:rep_lem_sup}
For any  $t\in[0,T]$, any  $\theta\geq 0$  and any    $ x=0,1,n-1$, we have that
\begin{equation}\label{eq38super}
\varlimsup _{\varepsilon \downarrow 0}\varlimsup _{n\rightarrow \infty}\frac 1n\log \bb P_{\delta_{\eta^n}}\left[\Big|\int_0^tV_{\eps,x}^{\theta,\varphi^n}(\eta_s,s)\, ds\Big|>\delta\right] \;=\;-\infty\,,
\end{equation}
for all $\delta>0$. 
\end{proposition}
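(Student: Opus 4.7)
The plan is to deduce Proposition \ref{lem:rep_lem_sup} directly from Proposition \ref{lem:rep_lem_sup_p} by a change of initial measure. All the ingredients have been assembled in the paragraph preceding the statement: since $\g(\cdot)$ takes values in a compact subinterval of $(0,1)$ (cf.\ Figure \ref{fig3}), the computation displayed there produces a constant $c_{\alpha,\beta}>0$, independent of $n$, such that
\[
\bb P_{\delta_{\eta^n}}[A]\;\leq\; e^{c_{\alpha,\beta}n}\,\bb P_{\nu_{\g(\cdot)}^n}[A]
\]
for every measurable set $A\subset \mc D_{\Omega_n}$. This bound is purely a statement about the initial distributions, and it lifts to path space because the two processes share the same generator $n^2\mc L_n$.

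First I would apply this inequality with
\[
A_n\;=\;\Big\{\,\Big|\int_0^t V_{\eps,x}^{\theta,\varphi^n}(\eta_s,s)\,ds\Big|>\delta\,\Big\}.
\]
Taking logarithms and dividing by $n$ yields
\[
\tfrac{1}{n}\log \bb P_{\delta_{\eta^n}}[A_n]\;\leq\; c_{\alpha,\beta}\,+\,\tfrac{1}{n}\log \bb P_{\nu_{\g(\cdot)}^n}[A_n].
\]
Passing to the $\varlimsup$ as $n\to\infty$ and then $\eps\downarrow 0$ sends the second term on the right-hand side to $-\infty$ by Proposition \ref{lem:rep_lem_sup_p}, while the constant $c_{\alpha,\beta}$ remains bounded. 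The left-hand side therefore tends to $-\infty$, which is exactly the desired conclusion.

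There is essentially no genuine obstacle here: the proposition is a one-line corollary of the preceding result. The only point worth verifying is that $c_{\alpha,\beta}$ is indeed finite uniformly in $n$, which follows from the explicit form of $\g$ (locally constant equal to $\alpha$ near $0$, equal to $\beta$ near $1$, and linearly interpolated in between, hence bounded away from $\{0,1\}$), so that both $\log \g(x/n)$ and $\log(1-\g(x/n))$ are uniformly bounded in $x$ and $n$. The very same argument, with the same constant $c_{\alpha,\beta}$, transfers Proposition \ref{-l_p} and Corollary \ref{cor_super_energy_p} to the deterministic initial configuration $\eta^n$, as stated in the text immediately preceding the proposition.
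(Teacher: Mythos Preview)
Your proposal is correct and follows exactly the paper's own argument: the paper does not give a separate proof of Proposition~\ref{lem:rep_lem_sup} but derives it (together with Proposition~\ref{-l} and Corollary~\ref{cor_super_energy}) directly from Proposition~\ref{lem:rep_lem_sup_p} via the Radon--Nikodym bound $\bb P_{\delta_{\eta^n}}[\cdot]\leq e^{c_{\alpha,\beta}n}\,\bb P_{\nu_{\g(\cdot)}^n}[\cdot]$ displayed just before the statement. Your justification of the uniformity in $n$ of $c_{\alpha,\beta}$ is also the right one.
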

\begin{proposition}\label{-l} For a function  $H\in 
	C^{0,1}_c$ and $\ell\in \bb R$ fixed, the following inequality holds:
	\begin{equation*}
	\varlimsup_{\eps\downarrow 0}\varlimsup_{n\to \infty}\pfrac{1}{n}
	\log \bb P_{\delta_{\eta^n}}\Big[\,\mc E_H(\pi^n*\ioe)\geq \ell\,\Big]\;\leq\; -\ell+c_{\alpha, \beta}\,.
	\end{equation*} 
\end{proposition}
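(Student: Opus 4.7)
The plan is to reduce Proposition~\ref{-l} to its counterpart for the reference product measure, Proposition~\ref{-l_p}, by a direct change-of-measure argument using the Radon--Nikodym bound displayed immediately before the statement.

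Concretely, the key input is the pointwise inequality $\bb P_{\delta_{\eta^n}}[A] \leq e^{c_{\alpha,\beta}n}\, \bb P_{\nu_{\g(\cdot)}^n}[A]$, valid for every measurable $A \subset \Ddiscreto$. Applying it to the event $A = \{\eta_{\cdot} : \mc E_H(\pi^n * \ioe) \geq \ell\}$, taking logarithms, and dividing by $n$, one gets
$$\tfrac{1}{n}\log \bb P_{\delta_{\eta^n}}\big[\mc E_H(\pi^n * \ioe) \geq \ell\big] \;\leq\; c_{\alpha,\beta} + \tfrac{1}{n}\log \bb P_{\nu_{\g(\cdot)}^n}\big[\mc E_H(\pi^n * \ioe) \geq \ell\big].$$
Next, I would pass to $\varlimsup$ in $n$ and then in $\eps$ on both sides. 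Since the constant $c_{\alpha,\beta}$ is independent of both $n$ and $\eps$, it is unaffected by these limits, while the second term is bounded from above by $-\ell$ thanks to Proposition~\ref{-l_p}. Adding the two contributions yields the claimed bound $-\ell + c_{\alpha,\beta}$.

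There is essentially no obstacle here: the bulk of the work (the Feynman--Kac bound, the estimate on the Dirichlet form at the reference measure \eqref{dir_est:ini_prof}, and the replacement estimate in Lemma~\ref{lemma:3.2}) has already been performed in the proof of Proposition~\ref{-l_p}. The only new ingredient is the comparison of initial conditions. The reason this comparison costs only an additive constant $c_{\alpha,\beta}$ on the $\tfrac{1}{n}\log$ scale is that the weight $\nu_{\g(\cdot)}^n(\{\eta^n\})$ of any single configuration decays no faster than exponentially in $n$, precisely because the profile $\g$ is bounded away from $0$ and $1$ on $[0,1]$ by construction. The very same transfer yields the analogous versions of Proposition~\ref{lem:rep_lem_sup_p} and Corollary~\ref{cor_super_energy_p}, as indicated in the paragraph just above the statement.
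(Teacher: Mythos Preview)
Your proof is correct and follows precisely the paper's own argument: the paper derives Proposition~\ref{-l} (together with Proposition~\ref{lem:rep_lem_sup} and Corollary~\ref{cor_super_energy}) directly from the corresponding statements under $\nu_{\g(\cdot)}^n$ via the bound $\bb P_{\delta_{\eta^n}}[\cdot]\leq e^{c_{\alpha,\beta}n}\,\bb P_{\nu_{\g(\cdot)}^n}[\cdot]$, exactly as you do.
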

\begin{corollary} \label{cor_super_energy} For $k\in\bb N$, for  functions  $\{H_j\}_{1\leq j\leq k}$ in $ 
	C^1_c$, and   $\ell\in \bb R$ fixed, we have
\begin{equation*}
\varlimsup_{\eps\downarrow 0}\varlimsup_{n\to\infty}\pfrac 1n \log
\bb P_{\delta_{\eta^n}}\Big[\max_{1\leq j\leq k}\mc E_{H_j}\big(\pi^n*\ioe \,\big)
\;\geq\; \ell\,\Big]\;\leq\; -\ell+c_{\alpha, \beta}\,.
\end{equation*}
\end{corollary}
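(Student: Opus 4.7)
The plan is to deduce Corollary~\ref{cor_super_energy} directly from Proposition~\ref{-l} via a union bound, exploiting the fact that $k$ is a fixed (finite) integer and that the exponential rate of a finite sum of positive sequences equals the maximum of the exponential rates of the summands.

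First I would bound the probability of the maximum by the sum of the probabilities. Since $\max_{1\leq j\leq k}\mc E_{H_j}(\pi^n*\ioe)\geq \ell$ if and only if there exists some $j\in\{1,\ldots,k\}$ with $\mc E_{H_j}(\pi^n*\ioe)\geq \ell$, a simple union bound gives
\begin{equation*}
\bb P_{\delta_{\eta^n}}\Big[\max_{1\leq j\leq k}\mc E_{H_j}(\pi^n*\ioe)\geq \ell\Big]\;\leq\; \sum_{j=1}^k \bb P_{\delta_{\eta^n}}\Big[\mc E_{H_j}(\pi^n*\ioe)\geq \ell\Big].
\end{equation*}

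Next I would pass to the logarithmic scale and apply the identity \eqref{sum_log_super} iteratively (since $k$ is finite, it applies to a sum of $k$ terms by induction). This yields
\begin{equation*}
\varlimsup_{\eps\downarrow 0}\varlimsup_{n\to\infty}\frac{1}{n}\log \sum_{j=1}^k \bb P_{\delta_{\eta^n}}\Big[\mc E_{H_j}(\pi^n*\ioe)\geq \ell\Big] \;=\; \max_{1\leq j\leq k}\varlimsup_{\eps\downarrow 0}\varlimsup_{n\to\infty}\frac{1}{n}\log \bb P_{\delta_{\eta^n}}\Big[\mc E_{H_j}(\pi^n*\ioe)\geq \ell\Big].
\end{equation*}

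Finally, Proposition~\ref{-l}, applied to each $H_j\in C^1_c\subset C^{0,1}_c$ individually, bounds each term on the right-hand side above by $-\ell+c_{\alpha,\beta}$; taking the maximum over $j\in\{1,\ldots,k\}$ preserves this bound, which concludes the proof. No substantial obstacle is expected here: the only subtlety is ensuring that \eqref{sum_log_super} extends from two to $k$ summands, which is immediate by induction since $k$ does not depend on $n$.
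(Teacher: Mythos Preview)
Your proof is correct and follows essentially the same logic as the paper. The paper obtains Corollary~\ref{cor_super_energy} by applying the change-of-measure bound $\bb P_{\delta_{\eta^n}}[\cdot]\leq e^{c_{\alpha,\beta}n}\,\bb P_{\nu_{\g(\cdot)}^n}[\cdot]$ directly to the event $\{\max_j \mc E_{H_j}\geq \ell\}$ and then invoking Corollary~\ref{cor_super_energy_p}; you instead take the union bound first and apply Proposition~\ref{-l} to each summand. Since Corollary~\ref{cor_super_energy_p} itself is (implicitly) obtained from Proposition~\ref{-l_p} by the same union-bound argument, the two routes differ only in the order in which the union bound and the change of measure are performed, and both are valid.
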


\section{Perturbed Process}\label{s4}

In order to derive a large deviations principle, it is natural to start with a  class of perturbations of the original process, which can lead the system to converge to any profile, or at least to any profile in a  dense set.

\textit{A priori}, it is not clear what is the natural set of perturbations of the system that one has to consider. For this reason, it makes sense to study at first a quite general set of perturbations. We will decide \textit{a posteriori} which is the correct set of perturbations based on the following criterion: the Radon-Nikodym derivative should be (close to) a function of the empirical measure  and the elliptic equation associated to the perturbed process must have a (unique) solution. This will be made clear along the text. 
Of course, we could have started from the correct set of perturbations, but we chose not doing so for  sake of clarity.

Fix two functions $H$ and $G$.  The general perturbed process we hence consider is the weakly asymmetric  exclusion process with slow boundary (WAEPSB), which we define through the generator\break $\mc L^{H,G}_{n,t}=\mc L^{H,t}_{n,0}+n^{-\theta}\mc L^{G,t}_{n,b}$ acting on functions 
$f:\Omega_n\rightarrow \bb{R}$ as:
\begin{align}\label{ln0H} 
(\mc L_{n,0}^{H,t}f)(\eta)& \;=\;
\sum_{x=1}^{n-2}e^{(\eta(x)-\eta(x+1))\big(H_t(\frac{x+1}{n})-H_t(\frac{x}{n})\big)}\Big(f(\eta^{x,x+1})-f(\eta)\Big)\,,\\
(\mc L_{n,b}^{G,t}f)(\eta)& \;=\;
	\sum_{x\in\{1,n-1\}} \!\Big[e^{G_t(\frac xn)}r_x(1-\eta(x))+e^{-G_t(\frac xn)}(1-r_x)\,\eta(x)\Big]\Big(f( \eta^x)-f(\eta)\Big)\,,\label{lnbH} 
\end{align}
 where  $\eta^{x,x+1}$ was defined in \eqref{etax,x+1}, $r_1=\alpha$, $r_{n-1}=\beta$ and $\sigma^x\eta$ was defined in \eqref{sigmax}.
The role of  the function $H$ and $G$ is  to introduce  weak asymmetries at  the bulk  and at the boundary, respectively. We  assume here that $H\in C^{1,2}$ and that $G$ is $C^1$ in time.
\begin{figure}[!htb]
\centering
\begin{tikzpicture}
\centerarc[thick,<-](0.5,0.3)(10:170:0.45);
\centerarc[thick,->](0.5,-0.3)(-10:-170:0.45);
\centerarc[thick,->](4.5,-0.3)(-10:-170:0.45);
\centerarc[thick,<-](4.5,0.3)(10:170:0.45);
\centerarc[thick,->](8.5,-0.3)(-10:-170:0.45);
\centerarc[thick,<-](8.5,0.3)(10:170:0.45);
\draw (1,0) -- (8,0);

\shade[ball color=black](1,0) circle (0.25);
\shade[ball color=black](3,0) circle (0.25);
\shade[ball color=black](5,0) circle (0.25);
\shade[ball color=black](6,0) circle (0.25);
\shade[ball color=black](8,0) circle (0.25);

\filldraw[fill=white, draw=black]
(2,0) circle (.25)
(4,0) circle (.25)
(7,0) circle (.25);

\draw (1.3,-0.05) node[anchor=north] {\small  $\bf 1$}
(2.3,-0.05) node[anchor=north] {\small $\bf 2 $}
(4.35,-0.05) node[anchor=north] {\small $\bf x $}
(8.5,-0.05) node[anchor=north] {\small $\bf n\!-\!1$};
\draw (0.5,0.8) node[anchor=south]{$\displaystyle\frac{\alpha}{n^\theta} e^{G_t\big(\tfrac{1}{n}\big)}$};
\draw (0.5,-0.8) node[anchor=north]{$\displaystyle\frac{(1-\alpha)}{n^\theta} e^{-G_t\big(\tfrac{1}{n}\big)}$};
\draw (4.5,0.8) node[anchor=south]{$\displaystyle e^{\tfrac{1}{n}\nabla^+_n H_t(\frac{x}{n})}$};
\draw (8.5,-0.8) node[anchor=north]{$\displaystyle\frac{\beta}{n^\theta} e^{G_t\big(\tfrac{n-1}{n}\big)}$};
\draw (8.5,0.8) node[anchor=south]{$\displaystyle\frac{(1-\beta)}{n^\theta} e^{-G_t\big(\tfrac{n-1}{n}\big)}$};
\draw (4.5,-0.8) node[anchor=north]{$\displaystyle e^{-\tfrac{1}{n}\nabla^+_n H_t(\frac{x}{n})}$};
\end{tikzpicture}
\caption{Illustration of  jump rates for the perturbed process.}\label{fig4}
\end{figure}
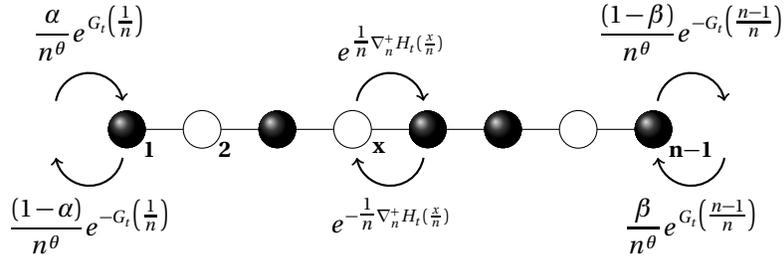

  The general formula for the Radon-Nikodym derivative between two Markov processes $\bb P$ and $\overline{\bb P}$  can be found in \cite{kl},  and  is given by
\begin{equation}\label{der_rN}
\frac{\dradon \bb P}{\dradon \overline{\bb P}}\Bigg|_{\mc F_t}\;=\;
\exp\Bigg\{-\Big(\int_0^t\big[\lambda(X_s)-\overline{\lambda}(X_s)\big]ds-\sum_{s\leq t}\log\frac{\lambda(X_{s^-})p(X_{s^-},X_s))}{\overline{\lambda}(X_{s^-})\overline{p}(X_{s^-},X_s)}\Big)\Bigg\}\,,
\end{equation}
where $\lambda$ and $\overline{\lambda}$  are the waiting times and $p(\cdot,\cdot)$ and $\overline{p}(\cdot,\cdot)$ are the   transition probabilities of $\bb P $ and $\overline{\bb P}$, respectively.  Above $\mc F_t$ stands for the natural filtration. It is  important to remark that the classical book \cite[formula (2.6), page 320]{kl} has 
indeed a typo on this formula:  the  formula for the Radon-Nikodym derivative   $\frac{\dradon \bb P}{\dradon \overline{\bb P}}\big|_{\mc F_t}$ as presented there was, in fact, the formula for  $\frac{\dradon \overline{\bb P}}{\dradon \bb P}\big|_{\mc F_t}$. Since one is the inverse of the other, this is of course, just a typo. However, since such formula is the key stone to obtain the rate function, we think it is worth explaining this typo in details and we leave this discussion  to Appendix~\ref{Appendix}. In what follows  we  compute the Radon-Nikodym derivative 
$\frac{\dradon\bb P_{\subm}}{\dradon\bb P_{\subm}^{H,G}}\Big|_{\mc F_t}$, where:

	$\bullet$ The measure $\bb P_{\subm}$ is induced by the Markov process with infinitesimal generator $\mc L_n=\mc L_{n,0}+n^{-\theta}\mc L_{n,b}$, see  \eqref{ln0} and \eqref{lnb},  starting from the configuration $\eta^n$.
	
	$\bullet$ The measure $\bb P_{\eta^n}^{H,G}$ is induced by the Markov process with infinitesimal generator $\mc L_{n,t}^{H,G}$\break $=\mc L_{n,0}^{H,t}+n^{-\theta}\mc L_{n,b}^{G,t}$, see   \eqref{ln0H} and \eqref{lnbH}, starting from the configuration $\eta^n$.

Having the expression \eqref{der_rN} for the Radon-Nikodym derivative between two processes, we first deal  with the sum
\begin{equation}\label{sum_log}
-\sum_{s\leq t}\log\frac{\lambda(X_{s^-})p(X_{s^-},X_s))}{\overline{\lambda}(X_{s^-})\overline{p}(X_{s^-},X_s)}\,.
\end{equation}
Evaluating   the parameters $\lambda,\bar\lambda, p,\bar p$ for our model,  \eqref{sum_log}  becomes  
\begin{align}
&\sum_{s\leq t}\bigg[G_s(\tfrac{1}{n})\Big\{\textbf{1}_{\{\eta_{s^-}(1)=0, \eta_{s}(1)=1, \eta_{s^-}(2)=\eta_{s}(2)\}}-\textbf{1}_{\{\eta_{s^-}(1)=1, \eta_{s}(1)=0, \eta_{s^-}(2)=\eta_{s}(2)\}}\Big\}\notag\\
&+\!G_s(\tfrac{n-1}{n})\Big\{\textbf{1}_{\{\eta_{s^-}(n-1)=0, \eta_{s}(n-1)=1, \eta_{s^-}(n-2)=\eta_{s}(n-2)\}}
-\textbf{1}_{\{\eta_{s^-}(n-1)=1, \eta_{s}(n-1)=0, \eta_{s^-}(n-2)=\eta_{s}(n-2)\}}\Big\}\notag\\
&+\sum_{x=1}^{n-2}\tfrac{1}{n}\nabla_n^+H_s(\tfrac{x}{n})\Big\{\textbf{1}_{\{\eta_{s^-}(x)=1, \eta_{s}(x)=0, \eta_{s^-}(x+1)=0, \eta_{s}(x+1)=1\}}\notag\\
&-\textbf{1}_{\{\eta_{s^-}(x)=0, \eta_{s}(x)=1, \eta_{s^-}(x+1)=1, \eta_{s}(x+1)=0\}}\Big\}\bigg]\,,\label{eq4.8}
\end{align}
where
$\nabla_n^+$ is  the discrete derivative defined in \eqref{nabla}.
To shorten the expression above, we define now some currents. 

For $x\in\{1,\ldots,n-2\}$, denote by $J^n_{x,x+1}(t)$, the current through the edge $\{x, x + 1\}$, that
is, the total number of particles that have jumped from $x$ to $x + 1$ minus the total
number of particles that have jumped from $x + 1$ to $x$ up to time $t$. The quantity  $J^n_{0,1}(t)$ denotes the current at site $1$, that is, the total number of particles created  at the site $1$ minus the total number of  particles destroyed at the site $1$  up to time $t$, while $J^n_{n-1,n}(t)$ denotes the current at site $n-1$, that is, the total number of  particles destroyed at the site $n-1$ minus the total number of particles created  at the site $n-1$  up to time $t$.
These notions allow to rewrite the expression \eqref{eq4.8} simply as
\begin{equation}\label{eq4.6}
\int_0^t\Big\{G_s(\tfrac{1}{n})\partial_sJ^n_{0,1}(s)-G_s(\tfrac{n-1}{n})\partial_sJ^n_{n-1,n}(s)+\sum_{x=1}^{n-2} \tfrac{1}{n}\nabla_n^+ H_s (\tfrac{x}{n})\partial_sJ^n_{x,x+1}(s)\Big\}\,ds\,.
\end{equation}
From an integration by parts in time, a  summation by parts in space  and  the conservation law $\eta_t(x)-\eta_0(x)=J^n_{x-1,x}(t)-J^n_{x,x+1}(t)$, we infer that \eqref{eq4.6} is the same as 
\begin{equation}\label{eq:sum}
	\begin{split}
&n\Bigg\{\<\pi_t^n, H_t\>-\<\pi_0^n, H_0\>-\int_0^t\<\pi_s^n, \partial_sH_s\>\,ds\\
&+\big(G_t(\tfrac{1}{n})-H_t(\tfrac{1}{n})\big)\tfrac{1}{n}J^n_{0,1}(t)-\int_0^t(\p_sG_s(\tfrac{1}{n})-\p_sH_s(\tfrac{1}{n}))\tfrac{1}{n}J^n_{0,1}(s)\,ds\\&+\big(H_t(\tfrac{n-1}{n})-G_t(\tfrac{n-1}{n})\big)\tfrac{1}{n}J^n_{n-1,n}(t)-\int_0^t(\p_sH_s(\tfrac{n-1}{n})-\p_sG_s(\tfrac{n-1}{n}))\tfrac{1}{n}J^n_{n-1,n}(s)\,ds\Bigg\}\,.
\end{split}
\end{equation}
On the other hand,  the
integral term on the Radon-Nikodym derivative \eqref{der_rN} is given by
\begin{align*}
&
\int_0^t\big[\lambda(X_s)-\overline{\lambda}(X_s)\big]ds\;=\;
n\Bigg\{-\int_0^t \<\pi^n_s, \Delta_nH_s\>\,ds -\int_0^t \<\chi^n_s, (\nabla_n^+ H_s)^2\>ds\\&+\int_0^t\Big[ \eta_s(n-1)\nabla_n^- H_s(\pfrac{n-1}{n})- \eta_s(1)\nabla_n^+ H_s(\pfrac{1}{n})\Big]\,ds +O_H(\pfrac{1}{n})\\
&+ \sum_{x\in\{1,n-1\}}\!\!n^{1-\theta}\int_0^t \Big[r_x(1-e^{G_s(\frac{x}{n})})(1-\eta_s(x))+(1-r_x)(1-e^{-G_s(\frac{x}{n})})\eta_s(x)\Big]\,ds
\Bigg\}\,,
\end{align*}
where the discrete derivatives  $\nabla_n^+ H$ and $\nabla_n^- H$ and the discrete Laplacian $\Delta_nH$ have been defined in \eqref{nabla} and \eqref{lapla} and
\begin{equation}\label{eq:chi_rep}
\chi^n_s(du)\;=\;\frac{1}{2n}\sum_{x=1}^{n-2}\Big(\eta_s(x)-\eta_s(x+1)\Big)^2\delta_{\frac{x}{n}}(du)\,.
\end{equation}
Putting all together, the Radon-Nikodym  derivative is  given by
	\begin{equation}\label{correct_RN}
\begin{split}
& \frac{\dradon\bb P_{\subm}}{\dradon\bb P^{H,G}_{\subm}}\Bigg|_{\mc F_t} \;=\;  \exp \Bigg\{-n\;\Bigg[\<\pi^n_t,H_t\>-\<\pi^n_0,H_0\>
-\int_0^t \<\pi^n_s, (\p_s+\Delta_n)H_s\>\,ds\\
& -\int_0^t \<\chi^n_s, (\nabla^+_n H_s)^2\>ds+\int_0^t\Big[ \eta_s(n-1)\nabla_n^- H_s(\pfrac{n-1}{n})- \eta_s(1)\nabla_n^+ H_s(\pfrac{1}{n})\Big]\,ds+ O_H(\pfrac{1}{n}) \\
&- \sum_{x\in\{1,n-1\}}\!\!n^{1-\theta}\int_0^t \Big[r_x(e^{G_s(\frac{x}{n})}-1)(1-\eta_s(x))+(1-r_x)(e^{-G_s(\frac{x}{n})}-1)\eta_s(x)\Big]\,ds\\
&+\big(G_t(\tfrac{1}{n})-H_t(\tfrac{1}{n})\big)\tfrac{1}{n}J^n_{0,1}(t)-\int_0^t(\p_sG_s(\tfrac{1}{n})-\p_sH_s(\tfrac{1}{n}))\tfrac{1}{n}J^n_{0,1}(s)\,ds\\&+\big(H_t(\tfrac{n-1}{n})-G_t(\tfrac{n-1}{n})\big))\tfrac{1}{n}J^n_{n-1,n}(t)-\int_0^t(\p_sH_s(\tfrac{n-1}{n})-\p_sG_s(\tfrac{n-1}{n}))\tfrac{1}{n}J^n_{n-1,n}(s)\,ds \Bigg]\;\Bigg\}\,.
\end{split}
\end{equation}
At this point we impose that $G=H$, that is, we pick $G$ as  $G(\pfrac{1}{n})=H(\pfrac{1}{n})$ and $G(\pfrac{n-1}{n})=H(\pfrac{n-1}{n})$. The reason for such a choice  goes as follows for each regime of $\theta$.

For $\theta\in(0,1)$, as shown in Proposition~\ref{lem:rep_lem_sup} (see also \eqref{eq:V_theta} for $\theta\in(0,1)$),  the time integral of the occupation variables $\eta_s(1)$ and $\eta_s(n-1)$ can be replaced by $\alpha$ and $\beta$, respectively. This situation lies in the same scenario of \cite{flm} which works on the case $\theta=0$ and no perturbation over the current is required.  

 For $\theta\in(1,+\infty)$, we need a spoiler:   Lemma \ref{lem:current_exp} will assure that the normalized currents $\tfrac{1}{n}J^n_{n-1,n}$ and $\tfrac{1}{n}J^n_{0,1}$ are super-exponentially small. Hence, no perturbation at the boundary would contribute in the limit, and the choice $G=H$ takes place for sake of simplicity.
 
 Finally, we  justify why we did not start \textit{a priori} with the choice $G=H$.  First, for pedagogical reasons: the most natural form of the Radon-Nikodym derivative is given by \eqref{correct_RN}, including the current at the boundary. Second, but not less important, the critical case $\theta=1$ not treated here may require some perturbation at the boundary. Since we are seeking to investigate this in a future work, the general formulation is already presented here to avoid double-working.

Now, as usual, we replace the discrete Laplacian by the continuous Laplacian, the discrete derivative by the continuous derivative and the values of $H$ on $1/n$ and $(n-1)/n$ by the values of $H$ on $0$ and~$1$, respectively. These changes can be done by paying a price of  order $1/n$, because $H\in C^{1,2}$. Because of the choice $G=H$, the Radon-Nikodym derivative \eqref{correct_RN} can be rewritten as 
\begin{equation}\label{correct_RN_1}
\begin{split}
& \frac{\dradon\bb P_{\subm}}{\dradon\bb P^{H}_{\subm}}\bigg|_{\mc F_t} =  \exp \Bigg\{-n\;\Bigg[\<\pi^n_t,H_t\>-\<\pi^n_0,H_0\>
-\int_0^t \<\pi^n_s, (\p_s+\Delta)H_s\>\,ds\\
& -\int_0^t \<\chi^n_s, (\p_u H_s)^2\>ds+\int_0^t\Big[ \eta_s(n-1)\p_uH_s(1)- \eta_s(1)\p_u H_s(0)\Big]\,ds+O_H(\pfrac{1}{n}) \\
&- \sum_{x\in\{1,n-1\}}\!\!n^{1-\theta}\int_0^t \Big[r_x(e^{H_s(u_x)}-1)(1-\eta_s(x))+(1-r_x)(e^{-H_s(u_x)}-1)\eta_s(x)\Big]\,ds
\Bigg]\;\Bigg\}\,,
\end{split}
\end{equation}
where $u_1=0$ and $u_{n-1}=1$. 

Note that for $\theta\in(0,1)$, the last sum in \eqref{correct_RN_1} above may explode, motivating us to adittionally assume $H_s(0)=H_s(1)=0$ for all $s\in[0,T]$, also in agreement with \cite{flm}.
In Section~\ref{s5}  this Radon-Nikodym derivative will be further studied.

\subsection{Hydrodynamic limit for the perturbed process}
Recall the definition of the empirical measure from \eqref{eq:emp_mea}. Let $\mu_n$ be a measure in $\Omega_n$ associated with  a measurable  profile $\gamma(\cdot)$.   Denote by $\bb P_{\mu_{n}}^H$  the measure  on $\mc D([0,T],\mc M)$ induced by the Markov process with infinitesimal generator $n^2\mc L_{n}^{H,t}$ and the initial measure $\mu_n$ and denote by $\bb Q^H_{\mu_{n}} $ the probability on  $\mc D([0,T],\mc M)$ induced by
 $\{\pi^n_t;\; t\in[0,T]\}$ and the initial measure $\mu_n$.
 Recall the definition \eqref{C_theta_set} for ${\textbf{C}_\theta}$ and keep in mind that we additionally assume $H_s(0)=H_s(1)=0$ for all $s\in[0,T]$, when $\theta\in (0,1)$.
\begin{theorem}\label{thm:hid_lim_weak}
 Suppose that the 
sequence $\{\mu_n\}_{n\in \bb N}$ is \textit{associated} with a measurable  profile $\gamma(\cdot)$ in the sense of \eqref{eq3}. 
Then,  for each $ t \in [0,T] $, for any $ \delta >0 $ and any  function $ f \in C^0 $, 
\begin{equation*}
\lim_{ n \rightarrow +\infty }
\bb P_{\mu_{n}}^H \Big[\;\eta_{\cdot} : \Big\vert \<\pi^n_t,f\> - \<\rho^H_t,f\>\, \Big\vert
> \delta \;\Big] \;=\; 0\,,
\end{equation*}
 where  $\rho^H\in L^2(0,T;\mc H^1(0,1))$ and\medskip

$\bullet$ If $\theta\in(0,1)$,  then $\rho^H$  
  is the unique  solution of the integral equation
\begin{equation}\label{eq:weak_eq_theta_menor_um}
\begin{split}
 \FDir(t,f,\rho^H)\;:=\;&\<\rho^H_t,f_t\>-\<\gamma,f_0\>-\int_0^t \<\rho^H_s, (\p_s+\Delta)f_s\>\,ds \\&+\int_0^t \Big[\beta\, \p_uf_s(1) -\alpha\, \p_uf_s(0)\Big]\, ds   -2\int_0^t\<\chi(\rho^H_s) \,\p_uH_s, \p_uf_s\>\,ds\;=\;0\,, 
\end{split}
\end{equation} for all $t\geq 0$ and for all $f\in \C$.
\bigskip

$\bullet$ If $\theta\in(1,\infty)$, then  $\rho^H$  
  is the unique   solution of the integral equation
\begin{equation}\label{eq:weak_eq_theta_maior_1}
\begin{split}
 \FNeu(t,f,\rho^H)\;:=\;& \<\rho^H_t,f_t\>-\<\gamma,f_0\>-\int_0^t \<\rho^H_s, (\p_s+\Delta)f_s\>\,ds \\&+\int_0^t \Big[\rho^H_s(1)\p_uf_s(1) -\rho^H_s(0)\p_uf_s(0)\Big]\, ds   -2\int_0^t\<\chi(\rho^H_s) \,\p_uH_s, \p_uf_s\>\,ds \;=\;0\,, 
\end{split}
\end{equation} for all $t\geq 0$ and $f\in \C$. 
\end{theorem}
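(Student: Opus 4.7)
The plan is to follow the classical entropy/martingale route for weakly asymmetric systems, using the super-exponential replacement lemmas of Section~\ref{s3} to handle the nonlinear drift and the slow boundary. I would organize the argument in four steps: tightness of $\{\bb Q^H_{\mu_n}\}_n$ on $\DM$, martingale decomposition, identification of limit points via replacement, and finally uniqueness of the limiting weak equation.

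\textbf{Tightness and first reduction.} Since $\pi^n_t\in\mc M$ and $\mc M$ is compact, tightness at fixed times is automatic, so by Aldous' criterion it suffices to control the modulus of continuity of $\<\pi^n_t,f\>$ for $f\in C^2$. For each such $f$, Dynkin's formula yields
\begin{equation*}
M^{n,f}_t \;=\; \<\pi^n_t,f\> - \<\pi^n_0,f\> - \int_0^t \big(\partial_s+n^2\mc L_{n,s}^{H,G}\big)\<\pi^n_s,f\>\,ds,
\end{equation*}
a martingale whose quadratic variation is $O(1/n)$ by a direct computation (the bulk rates are $1+O(1/n)$ and the boundary rates carry a factor $n^{-\theta}$). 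Aldous' criterion then follows from uniform bounds on the integral term, which come from the Taylor expansion described below. A standard argument then shows that every limit point is concentrated on trajectories $\pi_t(du)=\rho^H_t(u)\,du$ with $0\le\rho^H_t\le 1$.

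\textbf{Expansion of the integrand.} Taylor expanding $e^{(\eta(x)-\eta(x+1))\frac{1}{n}\nabla_n^+H_s(x/n)}=1+\frac{1}{n}(\eta(x)-\eta(x+1))\nabla_n^+H_s(x/n)+O(n^{-2})$ and reorganising the sums by discrete integration by parts, one obtains
\begin{align*}
n^2\mc L_{n,0}^{H,s}\<\pi^n_s,f_s\> \;=\;& \<\pi^n_s,\Delta_n f_s\> \;+\; \eta_s(1)\nabla_n^+f_s(\tfrac{1}{n}) - \eta_s(n-1)\nabla_n^-f_s(\tfrac{n-1}{n})\\
&\;+\;2\<\chi^n_s,\nabla_n^+H_s\,\nabla_n^+f_s\> \;+\; O_{H,f}(1/n),
\end{align*}
with $\chi^n_s$ as in \eqref{eq:chi_rep}. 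The boundary part of the generator contributes $n^{1-\theta}$ times a bounded quantity, hence vanishes in the limit whenever $\theta>1$, and when $\theta\in(0,1)$ it blows up unless $H_s(0)=H_s(1)=0$, which is precisely the extra assumption imposed on~$H$ in that regime (and encoded in $\C$).

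\textbf{Replacement lemmas and passage to the limit.} To identify the quadratic term, one applies Proposition~\ref{lem:rep_lem_sup} with $\varphi^n=2\,\nabla_n^+H_s\nabla_n^+f_s$ and $x$ ranging over the bulk, which super-exponentially replaces $\frac{1}{2}(\eta(x)-\eta(x+1))^2$ by $\chi(\eta^{\eps n}(x))$, and then passes $\eps\downarrow 0$ using the absolute continuity of the limit measure to identify this with $\chi(\rho^H_s)$. For the linear boundary terms $\eta_s(1)\p_uf_s(0)$ and $\eta_s(n-1)\p_uf_s(1)$, the regime-dependent behaviour splits:
\begin{itemize}
\item[$\bullet$] If $\theta\in(0,1)$, the boundary version of Proposition~\ref{lem:rep_lem_sup} replaces $\eta_s(1)$ by $\alpha$ and $\eta_s(n-1)$ by $\beta$, producing the Dirichlet boundary integrals in~\eqref{eq:weak_eq_theta_menor_um}.
\item[$\bullet$] If $\theta\in(1,+\infty)$, no replacement is needed: the traces $\rho^H_s(0),\rho^H_s(1)$ are well defined thanks to the $L^2(0,T;\mc H^1)$ regularity (which can be proved via a super-exponential energy estimate analogous to Proposition~\ref{-l}), and they give the Neumann-type boundary integrals in~\eqref{eq:weak_eq_theta_maior_1}.
\end{itemize}
Combined with $M^{n,f}_t\to 0$ in probability and the convergence of the initial condition, this proves that every limit point is concentrated on solutions of the corresponding integral equation.

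\textbf{Uniqueness of the weak solution.} It remains to show that \eqref{eq:weak_eq_theta_menor_um} and \eqref{eq:weak_eq_theta_maior_1} have at most one solution in $L^2(0,T;\mc H^1)$. In both cases I would argue by the energy method: given two solutions $\rho^1,\rho^2$ with the same initial datum, subtract the equations, use $w=\rho^1-\rho^2$ as a test function (after a standard time-mollification procedure), integrate by parts, and bound the cross term $\<\chi(\rho^1)\p_u H-\chi(\rho^2)\p_u H,\p_u w\>$ using the Lipschitz character of $\chi$ on $[0,1]$ and Cauchy--Schwarz; a Gronwall argument yields $w\equiv 0$. In the Dirichlet case this is direct; in the Neumann case the boundary terms cancel because $w$ satisfies the same linearised boundary relation. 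I expect this uniqueness step and the careful justification of the replacement at the boundary when $\theta\in(0,1)$ (where the slow boundary rate $n^{-\theta}$ is just barely fast enough in the diffusive scale) to be the main technical obstacles; all the bulk arguments are by now classical.
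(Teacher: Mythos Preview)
Your overall scheme (tightness via Aldous, martingale expansion, replacement, uniqueness) is exactly the paper's, and most steps are correctly identified. Two points need correction.

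For $\theta\in(0,1)$, you say the boundary contribution to $n^2\mc L_{n}^{H,s}\langle\pi^n_s,f\rangle$ ``blows up unless $H_s(0)=H_s(1)=0$''. This is a misattribution: that term is $n^{1-\theta}f(\tfrac1n)\cdot(\text{bounded})$ and the factor depending on $H$ is bounded regardless; what makes it diverge is the \emph{test function} $f$ not vanishing at the boundary. The paper accordingly proves tightness first for $f\in C^2_c$ and then extends by an $L^1$-approximation. The hypothesis $H\in C^{1,2}_0$ is needed elsewhere (in the Radon--Nikodym derivative~\eqref{correct_RN_1}), not here.

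The real gap is your claim that for $\theta\in(1,+\infty)$ ``no replacement is needed'' at the boundary. The single-site variable $\eta_s(1)$ is not a continuous functional of $\pi^n_s$, so knowing that the limit point has density in $L^2(0,T;\mc H^1)$ does not by itself let you pass to the limit in $\int_0^t\eta_s(1)\,\p_uf_s(0)\,ds$. One must first replace $\eta_s(1)$ by the box average $\eta_s^{\eps n}(1)=\langle\pi^n_s,\iota_\eps(0)\rangle$---this is exactly the $\theta>1$ case of $V^{\theta,\varphi^n}_{\eps,x}$ in~\eqref{eq:V_theta}---then take the weak limit in $n$, and only afterwards send $\eps\downarrow0$ using the trace. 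Relatedly, the replacement lemma you invoke (Proposition~\ref{lem:rep_lem_sup}) is stated for $\bb P_{\delta_{\eta^n}}$; since the hydrodynamic limit is under $\bb P^H_{\mu_n}$, a change of measure (bounded Radon--Nikodym derivative plus an entropy estimate, as in Lemma~\ref{lem:rep_lem_general}) is still needed.

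Your uniqueness argument (energy method, testing with $w=\rho^1-\rho^2$ after time-mollification) is a valid alternative to the paper's proof, which instead expands $\bar\rho_t$ in the Neumann (resp.\ Dirichlet) eigenfunctions $\psi_k$ of the Laplacian, sets $\mc R(t)=\sum_k\frac{1}{2((k\pi)^2+1)}\langle\bar\rho_t,\psi_k\rangle^2$, and shows $\mc R'(t)\lesssim\mc R(t)$ directly. The eigenfunction route has the minor advantage of avoiding the mollification needed to justify $w$ as an admissible test function.
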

The classical counterpart of \eqref{eq:weak_eq_theta_menor_um} is  the partial differential equation
\begin{equation}\label{eq:edpasy_menor_um}
\left\{
\begin{array}{l}
\displaystyle \partial_t \rho \, =\, \Delta \rho -2\,\p_u\big(\chi(\rho)\p_u H \big) \\
 \rho_t (0)\,=\,\alpha\,, \quad \forall\,t\in(0,T]\\
 \rho_t (1)\,=\,\beta\,, \quad \forall\,t\in(0,T]\\
\displaystyle \rho(0,\cdot) \,=\, \gamma(\cdot)\\
\end{array}
\right.
\end{equation}
while the classical counterpart of \eqref{eq:weak_eq_theta_maior_1} is 
\begin{equation}\label{eq:edpasy=1_maior_um}
\left\{
\begin{array}{l}
\displaystyle \partial_t \rho \, =\, \Delta \rho -2\,\p_u\big(\chi(\rho)\p_u H \big) \\
\p_u \rho_t (0)\,=\,2\,\chi\big(\rho_t(0)\big)\,\p_u H_t(0)\,, \quad \forall\,t\in(0,T]\\
\p_u \rho_t (1)\,=\,2\,\chi\big(\rho_t(1)\big)\,\p_u H_t(1)\,, \quad \forall\,t \in(0,T]\\
\displaystyle \rho(0,\cdot) \,=\, \gamma(\cdot)\\
\end{array}
\right.
\end{equation}
that is, $\rho^H$ in each case is a weak solution of the respective PDE above. 
\begin{remark}\rm As the reader can observe,  the  PDE \eqref{eq:edpasy_menor_um} has Dirichlet boundary conditions, while the PDE \eqref{eq:edpasy=1_maior_um} has Robin boundary conditions. At a first glance, the fact that the PDE \eqref{eq:edpasy=1_maior_um} has Robin boundary conditions may look as a contradiction, since the corresponding PDE \eqref{hydroeq_Neumann} in the symmetric case  is of Neumann  type.  This apparent contradiction is due to the fact that such  PDE is not the heat equation, but \textit{the heat equation with a non linear drift}. By taking $f\equiv 1$ in \eqref{eq:weak_eq_theta_maior_1} we can see that the total mass of the solution $\rho$ of \eqref{eq:edpasy=1_maior_um} is time-invariant, which characterizes it as very close to the symmetric case with Neumann boundary conditions.   
\end{remark}

The outline  of the proof of Theorem \ref{thm:hid_lim_weak} goes as follows.  As usual, the proof is split into tightness of the sequence  $\lbrace\mathbb{Q}^H_{\mu_n}\rbrace_{n\geq 1}$ and the characterization  of  limit points of this sequence. Let us denote such a  limit point by $\mathbb Q^H$. By Prohorov's Theorem, the two last results  imply  the convergence of  $\lbrace\mathbb{Q}_{\mu_n}^H\rbrace_{n\geq 1}$ to $\mathbb Q^H$  as $n\rightarrow \infty$. 

 In Subsection~\ref{sec:tightness} we deal with the tightness issue, while in Subsection~\ref{sec:charac_lim_points} we characterize the limit point $\mathbb Q^H$ as having   density $\rho^H_{t}(\cdot)$ which is a weak solution of  the corresponding hydrodynamic equation. By the  uniqueness of weak solutions of the hydrodynamic equations proved in Subsection~\ref{subsec:uniqueness}, we conclude that $\lbrace\mathbb{Q}_{\mu_n}\rbrace_{n\geq 1}$ has a unique limit point $\mathbb{Q}$, which yields the convergence of the whole sequence  to that limit point $\mathbb Q^H$.

\subsection{Tightness}\label{sec:tightness}

In this section we show that the sequence of probability measures\break$\{\mathbb Q_{\mu_n}^H\}_{n\geq 1}$ is tight in the Skorohod space $\DM$.
By  \cite[Proposition 1.7,  Chapter 4]{kl} it is enough to show that for every  test function $f$ in a dense subset of $C^0$ with respect to the uniform topology, the sequence of measures that
corresponds to the real processes $\langle\pi_{t}^{n},f\rangle$ is tight.
The prove this last claim, we will use the  Aldous' Criterion, see \cite{aldous1978}.
\begin{lemma}[Aldous' Criterion] Let $(S,d)$ be a Polish metric space. 
A sequence $\{P_{n}\}_{n\geq 1}$ of probability measures defined on a Skorohod space $\mathcal{D}_{S}$ is tight if the two conditions below hold:
\begin{enumerate}[(a)]
\item \label{item_a}
For every $t\in{[0,T]}$ and every $\varepsilon>0$, there exists a compact set  $K_{\varepsilon}^{t}\subset{\mathcal{M}}$  such that
\begin{equation*}
\sup_{n\geq 1}P_{n}\Big(\zeta_{t}\notin{K_{\varepsilon}^{t}}\Big)\;\leq\; {\varepsilon}\,.
\end{equation*}
\item \label{item_b}
For every $\varepsilon>0$,
\begin{equation*}
\lim_{\gamma\downarrow{0}}\varlimsup_{n\rightarrow{\infty}}\sup_{\substack{\tau\in{\mathcal{T}_{T}}\\ \theta\leq{\gamma}}}
P_{n}\Big(d(\zeta_{\tau+\theta},\zeta_{\tau})>\varepsilon\Big)\;=\;0\,,
\end{equation*}
\end{enumerate}
where $\mathcal{T}_{T}$ denotes the set of stopping times with respect to the canonical filtration, bounded by $T$, and $\zeta_t$ denotes the value of $\zeta\in \mathcal{D}_S$ at time $t$.
\end{lemma}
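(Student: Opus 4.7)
The plan is to reduce the tightness of $\{P_n\}$ to the standard characterization of relative compactness in the Skorokhod space $\mc D_S$, and then show that Aldous' stopping-time condition is strong enough to imply the usual modulus-of-continuity bound. By Prohorov's theorem, since $S$ is Polish (hence $\mc D_S$ is itself Polish under the Skorokhod topology), tightness of $\{P_n\}$ is equivalent to its relative compactness. A classical result of Skorokhod characterizes relatively compact subsets $K\subset \mc D_S$ as those satisfying: (i) for each $t\in[0,T]$ the set $\{f(t):f\in K\}$ is relatively compact in $S$, and (ii) $\lim_{\delta\downarrow 0}\sup_{f\in K}w'_f(\delta)=0$, where $w'_f(\delta)$ is the Skorokhod modulus obtained by minimizing $\max_i \sup_{s,t\in[t_{i-1},t_i)} d(f(s),f(t))$ over partitions $0=t_0<\cdots<t_k=T$ with $t_i-t_{i-1}>\delta$. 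Accordingly, I will upgrade the two hypotheses of the criterion into probabilistic versions of (i) and (ii).

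Condition~\eqref{item_a} directly takes care of the compact-containment requirement at each fixed $t$. To extend this from a fixed $t$ to a uniform statement on $[0,T]$, I would partition $[0,T]$ into finitely many points $0=t_0<t_1<\cdots<t_N=T$ with mesh smaller than a parameter $\gamma$, obtain compact sets $K^{t_i}_\eps$ from~\eqref{item_a}, and then enlarge them using the $\gamma$-oscillation control coming from~\eqref{item_b} (applied with deterministic stopping times $\tau\equiv t_i$). This produces a single compact set $K_\eps$ such that $\sup_n P_n(\exists\, t\in[0,T]:\zeta_t\notin K_\eps)$ is small, which is precisely the compact containment needed for (i) to hold with high probability.

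The core step is to show that~\eqref{item_b} implies control of the Skorokhod modulus $w'$. Here I would use Aldous' key observation: for any cadlag path, one can define random times $\tau_0=0$, $\tau_{k+1}=\inf\{t>\tau_k:d(\zeta_t,\zeta_{\tau_k})>\eps\}\wedge T$, which are stopping times with respect to the canonical filtration. If the consecutive stopping times are separated by at least $\gamma$, then $w'_{\zeta}(\gamma)\le 2\eps$ by construction. Hence it suffices to bound the probability that some $\tau_{k+1}-\tau_k<\gamma$. A union bound combined with the Markov/strong Markov character of the stopping-time hypothesis, applied to each pair $(\tau_k,\tau_k+\theta)$ with $\theta\le\gamma$, shows that this probability goes to zero uniformly in $n$ as $\gamma\downarrow 0$, provided the total number of such stopping times is a priori bounded; the latter follows again from~\eqref{item_b} because an infinite cascade of $\eps$-jumps in a small window would itself violate the hypothesis.

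The main obstacle is precisely this last point: promoting the quantitative control over a \emph{single} pair $(\tau,\tau+\theta)$ into control over the entire \emph{sequence} of hitting times $\{\tau_k\}$. This is handled by an iterative argument, choosing $\eps$ small first and then $\gamma$ small depending on $\eps$ and on an a priori (deterministic) upper bound for the number of $\eps$-jumps any path in the support of $P_n$ can make before time $T$; such a bound itself requires a separate estimate derived from~\eqref{item_b} by summing over a finite grid. Once this is in place, combining the compact containment from~\eqref{item_a} with the modulus control from~\eqref{item_b} yields relative compactness of $\{P_n\}$ in $\mc D_S$, hence tightness by Prohorov.
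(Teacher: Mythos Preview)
The paper does not prove this lemma at all: it is stated as a classical result and attributed to Aldous (the paper writes ``we will use the Aldous' Criterion, see \cite{aldous1978}'' and then states the lemma without proof). So there is no proof in the paper to compare your proposal against.

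That said, your sketch follows the standard route to Aldous' criterion (Prohorov plus the Skorokhod compactness characterization, with the $\eps$-hitting times $\tau_k$ to control $w'$), and the outline is correct. The one place that is still soft is the sentence ``the total number of such stopping times is a priori bounded; the latter follows again from~(b).'' Condition~(b) by itself does not give a deterministic bound on the number of $\eps$-jumps; what the usual proof shows is that for each fixed $N$, $\sup_n P_n(\tau_N<T)$ can be made small by choosing $N$ large (via a telescoping estimate $P_n(\tau_N<T)\le \sum_{k<N}P_n(\tau_{k+1}-\tau_k<\gamma)+P_n(\text{all gaps}\ge\gamma,\ \tau_N<T)$ and noting the second event is empty once $N\gamma>T$). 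With that refinement your argument goes through.
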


The condition \eqref{item_a} above in our setting can be translated into
\begin{equation*}
\lim_{A\rightarrow{+\infty}}\varlimsup_{n\rightarrow{+\infty}}\mathbb{P}^H_{\mu_n}\Big(|\langle\pi_{t}^{n},f\rangle|>A\Big)\;=\;0
\end{equation*}
which follows from Chebychev's inequality and the fact there is at most one particle per site.
 Now we show  condition {(b)}, which in this context, asks that 
  for all $\varepsilon > 0$ and any function $f$ in a dense subset of $C^0$, with respect to the uniform topology,  
\begin{equation}
\label{eq:tight_1}
\displaystyle \lim _{\delta \downarrow 0} \varlimsup_{n\rightarrow\infty} \sup_{\tau  \in \mathcal{T}_{T},\bar\tau \leq \delta} {\mathbb{P}}^H_{\mu _{n}}\Big(\eta_{\cdot}:\left\vert \langle\pi^{n}_{\tau+ \bar\tau},f\rangle-\langle\pi^{n}_{\tau},f\rangle\right\vert > \varepsilon \Big) \;=\;0\,,
\end{equation}
where the  stopping times are bounded by $T$.

The verification of condition \eqref{item_b} in our setting requires  two different dense sets  with respect to $C^0$ in the uniform topology. Namely, the space $C^1$ for $\theta<1$ and the space $C^2$ for  $\theta\in(1,+\infty)$.  For  $\theta\in(0,1)$, we first prove tightness  for functions $f \in C^{2}_{c}$ and then we extend it by a $L^1$ approximation procedure which is explained in \cite{bmns} to functions $f\in C^1$.

Given $f:\Omega_n\to\bb R$, we know by Dynkin's formula (see Lemma~A1.5.1 of \cite{kl})  that
\begin{equation}\label{dynkin}
M^{n,H}_{t}(f)\; =\; \langle \pi^{n}_{t},f\rangle - \langle \pi^{n}_{0},f \rangle - \int_{0}^{t} (\partial_{s} + n^{2}\mc L_n^{H,s}) \langle \pi^{n}_{s},f \rangle \,ds
\end{equation}
is a martingale with respect to the natural filtration $\{\mathcal{F}_{t}\}_{t \geq 0} = \{\sigma(\eta_{s}): s \leq t \}_{t \geq 0}$. 
By a simple computation,  for $\eta\in\Omega_n$, for $x\in \Sigma_n$ and for $s\in[0,t]$, we have that $\mc L_n^{H,s} \eta_s(x)=j^{H_s}_{x-1,x}(\eta_s) -j^{H_s}_{x,x+1}(\eta_s)$, where the instantaneous  current $j^{H_s}_{x,x+1}(\eta_s)$  is given  by
\begin{equation}\label{eq:current_bulk}
 j^{H_s}_{x,x+1}(\eta_s) \;=\; e^{(\eta_s(x)-\eta_s(x+1))\frac1n \nabla^+_nH_s(\frac{x}{n})}\big(\eta_s(x)-\eta_s(x+1)\big) 
\end{equation}
at the bulk  $x\in \{1,\dots,n-2 \}$ and given by 
\begin{align}
j^{H_s}_{0,1}(\eta_s) & \;=\; \frac{1}{n^{\theta}}\big(e^{H_s(\frac{1}{n})}\alpha(1-\eta_s(1))-e^{-H_s(\frac{1}{n})}(1-\alpha)\eta_s(1)\Big)\,,\label{eq:current_boundary_1}\\
j^{H_s}_{n-1,n}(\eta_s) &\;=\;\frac{1}{n^{\theta}}\big(-e^{H_s(\frac{n-1}{n})}\beta(1-\eta_s(n-1))+e^{-H_s(\frac{n-1}{n})}(1-\beta)\eta_s(n-1)\Big)\,.\label{eq:current_boundary_n}
\end{align}
at the boundary. Moreover,  the martingale $M^{n, H}_{t}(f) $ can be rewritten as
\begin{equation}\label{eq:Dynkin_mart}
  \langle \pi^{n}_{t},f \rangle - \langle \pi^{n}_{0},f \rangle  - \int_{0}^{t} \sum_{x=1}^{n-2}\nabla^+_{n}f(\tfrac xn)j^{H_s}_{x,x+1}(\eta_s)ds  -\int_{0}^{t}\Big[ nf(\tfrac 1n)j_{0,1}^{H_s}(\eta_s)-nf(\tfrac {n-1}{n})j_{n-1,n}^{H_s}(\eta_s)\Big]ds\,. 
\end{equation}

We  start with the case  $\theta\in(1,+\infty)$ and   prove (\ref{eq:tight_1}) directly for functions  $f \in C^{2}$.  
By the triangular inequality and an union bound, the probability in \eqref{eq:tight_1} is equal or less than 
\begin{equation*}
{\mathbb{P}}^H_{\mu _{n}}\Big(\eta_{\cdot}: \Big| M_{\tau}^{n,H}(f)- M_{\tau+ \bar\tau}^{n,H}(f)  \Big| > \frac \varepsilon 2 \Big)
 +{\mathbb{P}}^H_{\mu _{n}}\Big(\eta_{\cdot}: \Big|  \int_{\tau}^{\tau+ \bar\tau} n^2 \mc L_{n}^{H,s} \langle \pi_{s}^{n},f \rangle ds  \Big| > \frac \varepsilon 2 \Big)\,.
\end{equation*}
Applying  Chebychev's inequality in the term on the left hand side  of last display and  Markov's inequality in the term on the right hand side  of last display, the proof ends as long as we show that
\begin{equation} 
\label{eq:tight_2}
\displaystyle\lim _{\delta \downarrow 0} \varlimsup_{n\rightarrow\infty} \sup_{\tau  \in \mathcal{T}_{T},\bar\tau \leq \delta}\mathcal{\mathbb{E}}^H_{\mu _{n}}\Big[ \Big| \int_{\tau}^{\tau+ \bar\tau}n^2 \mc L_{n}^{H,s}\langle \pi_{s}^{n},f\rangle ds \Big|\Big] \;=\; 0
\end{equation}
and
 \begin{equation} 
 \label{eq:tight_3}
\displaystyle \lim _{\delta \downarrow 0} \varlimsup_{n\rightarrow\infty}\sup_{\tau  \in \mathcal{T}_{T},\bar\tau \leq \delta}\mathcal{\mathbb{E}}^H_{\mu _{n}}\Big[\Big( M_{\tau}^{n,H}(f)- M_{\tau+ \bar\tau}^{n,H}(f) \Big)^{2}  \Big]\;=\;0
\end{equation}
where $\mathcal{\mathbb{E}}^H_{\mu _{n}}$ denotes the expectation with respect to $\bb P_{\mu_n}^H$.
Now we prove \eqref{eq:tight_2} and for that purpose recall \eqref{dynkin}, which is equal to  \eqref{eq:Dynkin_mart} as mentioned above.   A  computation, based on the Taylor expansion of the exponential function and the fact that $H\in C^{1,2}$, permits to rewrite
\begin{equation}
  \sum_{x=1}^{n-2}\nabla^+_{n}f(\tfrac xn)j^{H_s}_{x,x+1}(\eta_s) 
\end{equation}
 as 
\begin{equation*}
   \nabla_n^+f(0)\eta_s(1)-\nabla^-_nf(1)\eta_s(n-1)+ \frac{1}{n}\sum_{x=1}^{n-1}\Delta_{n}f(\tfrac xn)\eta_s(x)
\end{equation*} plus terms of order $O_f(1)$. Since $f\in C^2$ and the fact that the number of particles per site is at most one, the last expression is also  of order $O_f(1)$.

Now we analyse the boundary terms in \eqref{eq:Dynkin_mart}.  Since  $f\in C^2$, these terms are of order $O(n^{1-\theta})$. Since $\theta\in(1,+\infty)$ we conclude that 
$
n^2 \mc L_n ^H( \langle \pi^n_{s}, f \rangle )$ is  bounded by a constant. 
Note that for  $\theta\in(0,1)$, since we consider  $f \in C_c^{2}$, all the boundary terms that appear in the expression for $n^2 \mc L_{n} ^{H,s}( \langle \pi^n_{s}, f \rangle )$ vanish and the previous bound also shows  \eqref{eq:tight_2} for the case $\theta\in(0,1)$, provided the test functions are in $C^2_c$.

 Now we prove  \eqref{eq:tight_3}. The quadratic variation of the martingale $M^{n,H}_t$ is given by 
\begin{equation*}
\langle M^{n,H}(f)\rangle_t\;=\;\int_0^t \Big[n^2\mc L_n^{H,s} \langle \pi^{n}_{s},f_s \rangle^2-2\langle \pi^{n}_{s},f_s \rangle n^2\mc L_n^{H,s}  \langle \pi^{n}_{s},f_s \rangle \Big]\, ds
\end{equation*}
and some computations give us that 
the contribution from the bulk dynamics in the previous expression writes as 
\begin{equation}\label{eq:quad_var_1}
\begin{split}
\int_0^t\frac{1}{n^2}\sum_{x=1}^{n-1}\Big(\nabla_n^+ f(\tfrac xn )\Big)^2\Big(& e^{\frac1n \nabla^+_nH_t(\tfrac{x}{n})}\eta_s(x)(1-\eta_s(x+1))+e^{-\frac1n \nabla^+_nH_t(\tfrac{x}{n})}\eta_s(x+1)(1-\eta_s(x))\Big)ds
\end{split}
\end{equation}
and the contribution from the boundary dynamics writes as 
\begin{equation}\label{eq:quad_var_2}
\int_0^t\frac{1}{n^\theta} \sum_{x\in\{1,n-1\}} f^2(\tfrac xn )\,\Big[ e^{H_t(\frac xn)}r_x(1-\eta(x)) +e^{-H_t(\frac xn)}(1-r_x)\,\eta(x)\Big]ds\,.
\end{equation}
Since $H\in C^{1,2}$, $f\in C^2$ and the fact that there is at most a particle per site, we conclude that the quadratic 
variation of the martingale $M^{n,H}_t$ is of order $O(\tfrac{1}{n}+\tfrac {1}{n^{\theta}})$, which vanishes  as $n\to+\infty$. Since $C^2$ is a dense subset of $C$, with respect to the uniform topology, the proof of tightness in the case $\theta\in(1,+\infty)$ ends. Now let us go back to the case $\theta\in(0,1)$. Recall that we have already seen above that for test functions in $C^2_c$ the limit in \eqref{eq:tight_2} is true. It remains to show \eqref{eq:tight_3}. But as in the case $\theta\in(1,+\infty) $ we can conclude that the quadratic variation of the  corresponding martingale is of order $O(\tfrac{1}{n})$ and again it vanishes  as $n\to+\infty$. This ends the proof of tightness.

\subsection{Replacement lemmas and energy estimates}

In this section we state  the replacement lemmas that we need in order to recognize the density profile as a weak solution of the corresponding hydrodynamic equation. At the end of this section we prove that the profile belongs to the Sobolev space given  in Definition \ref{Sobolev}.
We start with a replacement lemma which suits all cases of $\theta$. Recall \eqref{eq:V_0} and \eqref{eq:V_theta}.  In what follows $\varphi\in C^{0,0}$. 

 \begin{lemma} \label{lem:rep_lem_general}
For any $t\in[0,T]$, for any $\theta$  and  for   $ x=0,1,n-1$ we have that
\begin{equation*}
\begin{split}
&\varlimsup _{\varepsilon \downarrow 0}\varlimsup _{n\rightarrow \infty} \bb E_{\mu_n}\Big[\Big|\int_0^tV_{\eps,x}^{\theta,\varphi^n}(\eta_s,s)\, ds\Big|\Big] \;=\;0\,.
\end{split}
\end{equation*}
\end{lemma}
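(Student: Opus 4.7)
The plan is to deduce this $L^1$ statement directly from the super-exponential probability estimate already established in Section~\ref{s3}. The key observation is that $V_{\eps,x}^{\theta,\varphi^n}(\eta_s,s)$ is deterministically bounded: since $\varphi^n$ has uniformly bounded supremum norm, the occupation variables take values in $\{0,1\}$, and $\chi(u) = u(1-u)\in[0,1/4]$, there exists a constant $M>0$, depending only on $\|\varphi^n\|_\infty$ and on $\alpha,\beta$, such that $|V_{\eps,x}^{\theta,\varphi^n}(\eta_s,s)|\leq M$ for all $\eta,s,n,\eps$. Hence the random variable $Y^n := \int_0^t V_{\eps,x}^{\theta,\varphi^n}(\eta_s,s)\,ds$ satisfies $|Y^n|\leq Mt$ almost surely.

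Given this uniform bound, the standard splitting
\begin{equation*}
\bb E_{\mu_n}\big[|Y^n|\big] \;\leq\; \delta \;+\; Mt\cdot \bb P_{\mu_n}\big[|Y^n|>\delta\big]
\end{equation*}
reduces the lemma to showing that $\bb P_{\mu_n}[|Y^n|>\delta]\to 0$ as $n\to\infty$ (after first sending $\eps\downarrow 0$), followed by $\delta\downarrow 0$.

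To control this probability, I would use the entropy inequality to transfer from $\mu_n$ to the reference measure $\nu^n_{g(\cdot)}$: since both measures are supported on $\{0,1\}^{n-1}$ and $g(\cdot)$ is bounded away from $0$ and $1$, one has $H(\mu_n|\nu^n_{g(\cdot)})\leq Cn$ for some constant $C>0$, and consequently
\begin{equation*}
\bb P_{\mu_n}[A] \;\leq\; \frac{\log 2 + H(\mu_n|\nu^n_{g(\cdot)})}{\log\big(1 + 1/\bb P_{\nu^n_{g(\cdot)}}[A]\big)}\,.
\end{equation*}
Applied to $A=\{|Y^n|>\delta\}$, Proposition~\ref{lem:rep_lem_sup_p} tells us that for every $C'>0$, after taking $\eps\downarrow 0$ and $n$ large enough, we have $\bb P_{\nu^n_{g(\cdot)}}[A]\leq e^{-C'n}$. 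Choosing $C'>C$ makes the denominator in the entropy inequality dominate its numerator, giving $\bb P_{\mu_n}[A]\to 0$. Passing $\delta\downarrow 0$ concludes.

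There is no genuine obstacle here beyond bookkeeping: the three cases $x=0$ and $x\in\{1,n-1\}$ (and within the latter, the two sub-cases $\theta\in(0,1)$ versus $\theta\in(1,+\infty)$ dictated by \eqref{eq:V_theta}) are all already subsumed in Proposition~\ref{lem:rep_lem_sup_p}, whose proof incorporates the necessary Dirichlet-form estimates both in the bulk (through Lemma~\ref{lemma:3.2}) and at the boundary (through \eqref{eq:bound_dir_form}), together with the choice of the reference profile $\g$ designed precisely to absorb the boundary errors for every $\theta\geq 0$. Hence the present lemma follows, essentially for free, from the super-exponential version already proven.
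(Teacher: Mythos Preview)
Your proof is correct and follows essentially the same route the paper sketches: uniform boundedness of $V_{\eps,x}^{\theta,\varphi^n}$, the splitting $\bb E[|Y^n|]\le \delta + Mt\,\bb P[|Y^n|>\delta]$, and then the entropy inequality combined with the super-exponential estimate of Proposition~\ref{lem:rep_lem_sup_p} to kill the probability term. The paper's hint also mentions that ``the Radon--Nikodym derivative is bounded''; this refers to the bound $\tfrac{\dradon\bb P^{H}}{\dradon\bb P}\le e^{C(H,T)n}$ needed when the lemma is actually applied under the \emph{perturbed} law $\bb P^{H}_{\mu_n}$ in Section~\ref{sec:charac_lim_points}, but for the statement as written (under $\bb E_{\mu_n}$) your argument is complete.
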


From  the super-exponential replacement lemma stated in Lemma \ref{lem:rep_lem_sup} together the fact that the Radon-Nikodym derivative is bounded and an entropy estimate (needed in order to change measures), we obtain all the replacement lemmas stated above.  For this reason we omit their proofs and  leave the gaps to the  reader. 
Finally, we note that the density $\rho^H_{t}(u)$ belongs to $L^{2}(0,T;\mathcal{H}^{1})$, see  Definition \ref{Sobolev}. For that purpose, let us define the linear functional $\ell_{\rho^H}$ on $C^{0,1}_{c}$ by 
\begin{equation*}
\ell_{\rho^H}(f) \;=\; \<\!\< \partial_{u}f,\rho^H\>\!\> \;=\; \int^{T}_{0}\int_0^1 \partial_{u}f_{s}(u)  \pi_{s}(du) ds\,.
\end{equation*}

\begin{lemma} 
\label{EE}
The following inequality holds:
\begin{equation*}
\mathbb{E}\left[ \sup _{f\in C^{0,1}_{c} }\Big\lbrace \ell_{\rho^H}(f) - 2 \Vert f \Vert_{L^2(0,T;(0,1))}^{2}\Big\rbrace \right]\;\lesssim\; 1\,.
\end{equation*}
\end{lemma}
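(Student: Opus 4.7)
The plan is to obtain the energy estimate by passing to the limit from a super-exponential energy bound at the level of the empirical measure, transferring the estimate from the symmetric process to the weakly asymmetric one via the bounded Radon-Nikodym derivative \eqref{correct_RN_1}.

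First, I would fix a countable dense subset $\{f_k\}_{k\geq 1}\subset C^{0,1}_c$ in a topology for which $f\mapsto \ell_\pi(f)-2\|f\|^2_{L^2(0,T;(0,1))}$ is continuous for each $\pi\in\DM$. Then the supremum over $C^{0,1}_c$ agrees with the supremum over $\{f_k\}_{k\geq 1}$, and since the partial maxima $\max_{1\leq k\leq m}\{\ell_\pi(f_k)-2\|f_k\|^2\}$ are monotone increasing in $m$, by monotone convergence it suffices to prove
\begin{equation*}
\sup_{m\geq 1}\mathbb{E}\Big[\max_{1\leq k\leq m}\{\ell_{\rho^H}(f_k)-2\|f_k\|^2_{L^2(0,T;(0,1))}\}\Big]\;\lesssim\; 1.
\end{equation*}

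For each fixed $m$, the functional $\pi\mapsto \max_{1\leq k\leq m}\mc E_{f_k}(\pi)$ is continuous on $\DM$ (as a finite maximum of linear functionals of $\pi$ against $\partial_u f_k$), so combining the hydrodynamic limit $\bb Q^H_{\delta_{\eta^n}}\Rightarrow\bb Q^H$ from Theorem~\ref{thm:hid_lim_weak} with Fatou and a smoothing $\pi\mapsto \pi*\ioe$ (harmless because $\pi^n*\ioe$ approximates $\pi^n$ in duality with the fixed $\partial_u f_k$ as $\eps\downarrow 0$), the bound reduces to showing
\begin{equation*}
\varlimsup_{\eps\downarrow 0}\varlimsup_{n\to\infty}\mathbb{E}^{\bb P^H_{\delta_{\eta^n}}}\Big[\max_{1\leq k\leq m}\mc E_{f_k}(\pi^n*\ioe)\Big]\;\lesssim\; 1
\end{equation*}
uniformly in $m$. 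Applying the elementary inequality $\max_k a_k\leq \tfrac{1}{n}\log\sum_k e^{na_k}$ together with Jensen reduces the problem further to controlling $\max_{1\leq k\leq m}\tfrac{1}{n}\log\mathbb{E}^{\bb P^H_{\delta_{\eta^n}}}\big[\exp\{n\mc E_{f_k}(\pi^n*\ioe)\}\big]$, since the extra $\log m/n$ vanishes as $n\to\infty$.

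The Radon-Nikodym derivative \eqref{correct_RN_1} is uniformly bounded by $e^{C(H)n}$ (because $H\in C^{1,2}$, $\pi^n$ has total mass at most one, and the boundary contribution of order $n^{1-\theta}$ stays bounded, either because $H$ vanishes at the boundary when $\theta\in(0,1)$ or because $n^{1-\theta}\to 0$ when $\theta\in(1,+\infty)$), so one may replace $\bb P^H_{\delta_{\eta^n}}$ by $\bb P_{\delta_{\eta^n}}$ at an additive cost of $C(H)$ in the logarithmic scale. The layer-cake identity $\mathbb{E}[e^{nX}]=\int_0^\infty\bb P[X>\tfrac{\log t}{n}]\,dt$ together with Proposition~\ref{-l}, which provides $\bb P_{\delta_{\eta^n}}[\mc E_{f_k}(\pi^n*\ioe)\geq \ell]\leq e^{-n\ell+c_{\alpha,\beta}n+o(n)}$ with the constant $c_{\alpha,\beta}$ independent of $k$, then yields $\tfrac{1}{n}\log\mathbb{E}^{\bb P_{\delta_{\eta^n}}}[e^{n\mc E_{f_k}(\pi^n*\ioe)}]\leq c_{\alpha,\beta}+1+o(1)$. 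Altogether we obtain a bound $C(H)+c_{\alpha,\beta}+1+o(1)$, uniform in $m$ and $k$, which finishes the argument.

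The main obstacle is precisely the uniformity in $k$, essential in order to let $m\to\infty$ at the end; this works because Proposition~\ref{-l} furnishes a bound whose right-hand side is independent of the test function, reflecting that the underlying Dirichlet form estimate \eqref{dir_est:ini_prof} and the entropy argument of Lemma~\ref{lemma:3.2} are insensitive to the particular shape of $f_k$ (only to its regularity). A secondary technicality is orchestrating the triple limit $n\to\infty$, $\eps\downarrow 0$, $m\to\infty$ in a consistent order, which is handled by the monotonicity of $\max_{1\leq k\leq m}$ in $m$ and the continuity in $\pi$ of each $\mc E_{f_k}$.
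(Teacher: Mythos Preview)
Your proposal follows the standard route (countable dense reduction, log-sum-exp plus Jensen, change of measure via the bounded Radon--Nikodym derivative), which is exactly the argument the paper defers to when it says that the energy estimate is obtained ``following the arguments as in \cite{FN2017}''; the paper provides no detailed proof of its own here.

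One step needs tightening. In the last stage you invoke Proposition~\ref{-l} through the layer-cake identity to recover $\tfrac{1}{n}\log\bb E_{\delta_{\eta^n}}[\exp\{n\mc E_{f_k}(\pi^n*\ioe)\}]\leq c_{\alpha,\beta}+1+o(1)$. But Proposition~\ref{-l} is stated as a $\varlimsup$ bound for each \emph{fixed} level $\ell$; the implicit $o(n)$ in the tail bound you write down could in principle depend on $\ell$, so integrating over $\ell$ is not justified from the statement alone. The clean fix---and the route taken in the references the paper cites---is to bound the exponential moment directly via Feynman--Kac and the Dirichlet form estimate \eqref{dir_est:ini_prof}, which yields $\bb E_{\nu^n_{\g(\cdot)}}[\exp\{n\mc E_{f_k}(\pi^n*\ioe)\}]\leq e^{c_0 n}$ with $c_0$ independent of $k$; Proposition~\ref{-l_p} is itself derived from this bound via Chebyshev, so the layer-cake detour is slightly circular. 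With this adjustment your scheme is complete and coincides with the intended argument.
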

From  the last result  it follows that  $\ell_{\rho^H}$ is $\bb Q^H$ almost surely continuous, so that this linear functional  can be extended to $L^{2}([0,T]\times (0,1))$. Then, by the Riesz's Representation Theorem,  we can find $\zeta \in L^{2}([0,T]\times (0,1))$ such that
$\ell _{\rho^H}(f) = -\<\!\< f,\zeta\>\!\>$
for all $f \in C^{0,1}_{c}$, which implies   $\rho^H \in L^{2}(0,T;\mathcal{H}^{1})$.

\subsection{Characterization of limit points}\label{sec:charac_lim_points}
Since at most one particles is allowed per site,   any limit point of  the sequence $\lbrace\mathbb{Q}^H_{\mu_n}\rbrace_{n\geq 1}$ is concentrated on trajectories of  measures that are  absolutely continuous with respect to the Lebesgue measure. That is, any limit point  $\bb Q^H$ of the sequence sequence $\{\mathbb Q_n^H\}_{n \ge 1}$ is concentrated on trajectories of measures $\pi_{t}(du)$  such that $\pi_{t}(du)=\rho^H(t,u)du$.

 Since the initial measure is associated to the profile $\gamma(\cdot)$ we also know  that all limit points $\bb Q^H$ of the sequence $\{\mathbb Q^H_{\mu_n}\}_{n \ge 1}$ are concentrated on the initial measure $\pi_{0}(du)=\gamma(u)du$. Now we prove that all limit points are concentrated on  trajectories of measures of the form $\rho^H_t(u)du$, where  $\rho^H_{t}(\cdot)$ is a weak solution of the corresponding hydrodynamic equation. 
  For that purpose, let $\bb Q^H$ be a limit point of the sequence $\lbrace\bb Q_{\mu_n}^H\rbrace_{n \geq 1}$ and assume, without loss of generality that $\lbrace\bb Q_{\mu_n}^H\rbrace_{ n \ge 1}$ converges weakly to $\bb Q^H$ as $n\to+\infty$.  

\begin{proposition}\label{prop:weak_sol}
If $\bb Q^H$ is a limit point of  $ \{\bb Q^H_{\mu_n}\}_{n\in\mathbb N}$,  then 
\begin{equation*}
\bb Q^H\Big( \pi\in \DM \;:\; \pi_t(du)=\rho_t(u)du \text{ and }\mc F_\theta(t, f,\rho)= 0,\forall t\in [0,T]\,,\, \forall f \in \C\,\Big)\;=\;1\,,
\end{equation*}
where $\C$ has been defined in \eqref{C_theta_set}, and
\begin{equation*}
\mc F_\theta(t, f,\rho) \;:=\; 
\begin{cases}
\FDir(t,f,\rho), \;\textrm{if }\theta\in[0,1),\\
\FNeu(t,f,\rho), \; \textrm{if }\theta\in(1,+\infty),\\
\end{cases}
\end{equation*}
with $\FDir$ and $\FNeu$ defined in  \eqref{eq:weak_eq_theta_menor_um} and \eqref{eq:weak_eq_theta_maior_1}.
\end{proposition}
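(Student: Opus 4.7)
The plan is to start from Dynkin's martingale \eqref{dynkin}, show that the martingale term is negligible in $L^2$, identify the integrand using a Taylor expansion of the exponential rates, and apply the replacement lemmas (via $V_{\eps,x}^{\theta,\varphi^n}$ from \eqref{eq:V_0}, \eqref{eq:V_theta}) in order to rewrite things in terms of $\rho^H$. Absolute continuity of $\bb Q^H$-almost sure limit trajectories is automatic from the one particle per site constraint, so the content is in the weak formulation. By continuity of the limit process (jumps of $\pi^n$ are $O(1/n)$) and separability of $\textbf{C}_\theta$, it is enough to verify $\mc F_\theta(t,f,\rho)=0$ for a countable dense set of $(t,f)$ and pass the weak convergence $\bb Q^H_{\mu_n}\to \bb Q^H$ term by term.

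The explicit form \eqref{eq:Dynkin_mart} is the starting point. The quadratic variation bound of order $O(1/n+1/n^\theta)$ derived in Subsection~\ref{sec:tightness} together with Doob's inequality yields $M_t^{n,H}(f)\to 0$ in $L^2(\bb P^H_{\mu_n})$. Next I Taylor-expand the bulk current \eqref{eq:current_bulk}, which gives
\[
j^{H_s}_{x,x+1}(\eta_s)\;=\;(\eta_s(x)-\eta_s(x+1))+\tfrac{1}{n}\nabla_n^+H_s(\tfrac xn)(\eta_s(x)-\eta_s(x+1))^2+O(\tfrac1{n^2}).
\]
Abel summation applied to the first contribution produces the discrete Laplacian $\frac1n\sum \Delta_n f_s(\tfrac xn)\eta_s(x)$ (converging to $\langle\rho_s,\Delta f_s\rangle$) plus the two boundary pieces $\nabla_n^+f_s(\tfrac1n)\eta_s(1)$ and $-\nabla_n^+f_s(\tfrac{n-2}{n})\eta_s(n-1)$, whose prefactors converge to $\p_uf_s(0)$ and $-\p_u f_s(1)$. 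The quadratic piece has the form required by $V_{\eps,0}^{\theta,\varphi^n}$ with $\varphi^n_s(u)=2\,\nabla_n^+f_s(u)\nabla_n^+H_s(u)$; Lemma~\ref{lem:rep_lem_general} replaces $\frac12(\eta_s(x)-\eta_s(x+1))^2$ by $\chi(\eta_s^{\eps n}(x))$ and then $\eps\downarrow0$ together with the continuity of $\chi$ and the density of $\pi_s$ identifies the limit as $2\,\langle\chi(\rho^H_s)\p_u H_s,\p_uf_s\rangle$. Combined with $\int_0^t\langle\pi_s^n,\p_sf_s\rangle ds\to\int_0^t\langle\rho^H_s,\p_sf_s\rangle ds$, this recovers all the terms of \eqref{eq:weak_eq_theta_menor_um}--\eqref{eq:weak_eq_theta_maior_1} that appear in both regimes.

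It remains to handle the boundary. The contribution coming from the $\mc L_{n,b}^{H,s}$ part of \eqref{eq:Dynkin_mart} is $\int_0^t[nf_s(\tfrac1n)j^{H_s}_{0,1}-nf_s(\tfrac{n-1}n)j^{H_s}_{n-1,n}]\,ds$, which by \eqref{eq:current_boundary_1}--\eqref{eq:current_boundary_n} carries a prefactor $n^{1-\theta}$. When $\theta\in(1,+\infty)$ this vanishes uniformly, so no extra term arises; when $\theta\in(0,1)$, the fact that $\textbf{C}_\theta=C^{1,2}_0$ gives $nf_s(\tfrac1n)=O(1)$ but the $n^{-\theta}$ in the current still kills the term. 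To close the case $\theta\in(0,1)$ one then applies Proposition-\ref{lem:rep_lem_sup}-type replacement (or just the $L^1$ version Lemma~\ref{lem:rep_lem_general}) with $V^{\theta,\varphi^n}_{\eps,1}$ and $V^{\theta,\varphi^n}_{\eps,n-1}$, i.e., in the Dirichlet form of $V$, replacing $\eta_s(1)\leadsto\alpha$ and $\eta_s(n-1)\leadsto\beta$ in the boundary expressions obtained by summation by parts. For $\theta\in(1,+\infty)$ one uses instead the Neumann form of $V$, replacing $\eta_s(1)\leadsto \eta^{\eps n}_s(1)$ and $\eta_s(n-1)\leadsto\eta^{\eps n}_s(n-1)$; the energy estimate (Lemma~\ref{EE}) gives $\rho^H\in L^2(0,T;\mc H^1)$ so that the traces $\rho^H_s(0)$, $\rho^H_s(1)$ are well defined and the empirical box averages converge to them as $\eps\downarrow 0$.

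The main obstacle is the boundary step in the supercritical case $\theta\in(1,+\infty)$: after the replacement $\eta_s(x)\leadsto \eta_s^{\eps n}(x)$ at $x\in\{1,n-1\}$ one still has to take $n\to\infty$ followed by $\eps\downarrow0$ and justify that the limit of these averaged boundary values is precisely the Sobolev trace $\rho^H_s(0)$, $\rho^H_s(1)$ that appears in \eqref{eq:weak_eq_theta_maior_1}. This uses Lemma~\ref{EE} (giving $\rho^H\in L^2(0,T;\mc H^1)$) together with the standard characterisation of traces as $L^2(0,T)$-limits of $\frac{1}{\eps}\int_0^\eps \rho^H_s(u)\,du$. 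All remaining steps, in particular passing the weak convergence $\bb Q_{\mu_n}^H\to\bb Q^H$ through the now-continuous functionals of $\pi$, are routine given Definition~\ref{energy} and the tightness already established in Subsection~\ref{sec:tightness}.
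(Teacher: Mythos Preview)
Your proposal is correct and follows essentially the same route as the paper: Dynkin's martingale plus the quadratic variation bound (Doob), Taylor expansion of the exponential rates, summation by parts, the replacement lemmas for $x=0,1,n-1$ from Lemma~\ref{lem:rep_lem_general}, and the energy estimate Lemma~\ref{EE} to make sense of traces in the Neumann case. The paper presents the argument in reverse order---starting at the $\bb Q^H$ level, inserting $\pi_s*\iota_\eps$, splitting into the three probabilities \eqref{eq:char_1}--\eqref{eq:char_3}, and only then using Portmanteau to pull back to $\bb Q^H_{\mu_n}$ and invoke the martingale---but the substance is identical. One technical point the paper makes explicit and you gloss as ``routine'': since $\iota_\eps$ is an indicator and hence discontinuous, the functionals involving $\pi^n*\iota_\eps$ are not literally continuous on $\DM$, so Portmanteau cannot be applied directly; the paper handles this by approximating $\iota_\eps$ by continuous functions with an error vanishing as $\eps\downarrow 0$ (cf.\ the reference to \cite[Proposition~A.3]{fgn1} right before \eqref{eq:char_5}).
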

\begin{proof} 
Let us start  with the case  $\theta\in (1,+\infty)$.
It is enough to check that, for any $\delta > 0$ and any $ f\in \C= C^{1,2}$,   
\begin{equation}\label{prob_charac}
\bb Q^H\bigg(\pi\in \DM \;:\; \sup_{0\le t \le T} \left\vert \FNeu(t,f) \right\vert>\delta\bigg)\;=\;0\,.
\end{equation}
For $u\in[0,1]$ and $\varepsilon>0$, let $\iota_\varepsilon(u): [0,1]\to \bb R$ be an approximation of the identity defined as 
\begin{equation}\label{iota}
\iota_\varepsilon(u)(v)\;:=\; \begin{cases} \varepsilon^{-1}  \; \mathbf{1}_{(u,u+\varepsilon)}(v),  & \text{ if } u \in [0, 1 - 
       \varepsilon), \\
\varepsilon^{-1}\; \mathbf{1}_{(u-\varepsilon,u)}(v), & \text{ if } u \in (1 -        \varepsilon, 1].
 \end{cases}
\end{equation}
Note that $\eta^{\eps n}_s(x)=\pi^n_s*\iota_\eps(\tfrac{x}{n})$ and
 \begin{equation}\label{convolutions}
 \pi_s* \iota_\varepsilon(u)\;:=\; \begin{cases} \tfrac{1}{\varepsilon}\int_{u}^{u+\varepsilon}\rho^H_{s}(v)dv,  & \text{ if } u \in [0, 1 - 
       \varepsilon), \\
 \tfrac{1}{\varepsilon}\int_{u-\varepsilon}^u \rho^H_{s}(v)dv, & \text{ if } u \in (1 - \varepsilon, 1],
        \end{cases}
\end{equation}
since $\bb Q^H$ is concentrated on trajectories of measures that are    absolutely continuous with respect to the Lebesgue measure, that is, $\pi_{t}(du)=\rho^H_t(u)du$.
By adding and subtracting   $\pi_s* \iota_\varepsilon(0)$ and $\pi_s* \iota_\varepsilon(1)$ to $\rho^H_{s}(0)$  and to  $\rho^H_{s}(1)$,
 respectively, by adding and subtracting  $\chi(\pi_s* \iota_\varepsilon(u))$ to    $\chi(\rho^H_s(u))$, and applying the triangular inequality, 
 we can now bound the probability in \eqref{prob_charac} by  the sum of the following probabilities:
\begin{align}\label{eq:char_1}
\bb Q^H\bigg(& \pi_t(du)=\rho(du)\;:\; \sup_{0\le t \le T} \Big|\<\rho^H_t,f_t\>-\<\gamma,f_0\>-\int_0^t \<\rho^H_s, (\p_s+\Delta)f_s\>\,ds  \\
&- \int_0^t2\<\chi(\pi_s* \iota_\varepsilon) \,\p_uH_s, \p_uf_s\>\,ds +\int^{t}_{0} \Big[\pi_s* \iota_\varepsilon(1)\p_uf_s(1) -\pi_s* \iota_\varepsilon(0)\p_uf_s(0)\Big] \, ds \Big|>\dfrac{\delta}{3}\bigg)\,,\notag\\
\bb Q^H\bigg(& \pi_t(du)=\rho(du)\;:\;  \Big| \int_0^t2\<(\chi(\rho^H_s)-\chi(\pi_s* \iota_\varepsilon)) \,\p_uH_s, \p_uf_s\>\,ds \Big|>\dfrac{\delta}{3}\bigg)\,,\label{eq:char_2}\\
\bb Q^H \bigg(& \pi_t(du)=\rho(du)\;:\; \sup_{0\le t \le T} \int^{t}_{0} \Big[( \rho^H_{s}(1)-\pi_s* \iota_\varepsilon(1))\p_uf_s(1) -( \rho_{s}^H(0)-\pi_s* \iota_\varepsilon(0))\p_uf_s(0)\Big] \,ds    \Big|>\dfrac{\delta}{3}\bigg).\label{eq:char_3}
\end{align}
Now to control \eqref{eq:char_2}, observe that, by the triangular inequality and the fact that $\rho^H_s(\cdot)\leq 1$ for all $s\in[0,T]$, we have that 
\begin{equation}
\begin{split}
\Big|\chi(\rho^H_s(u)-\chi(\pi_s* \iota_\varepsilon)(u) \Big|\;\leq\; C |\rho^H_s(u)-\pi_s* \iota_\varepsilon(u)|
\end{split}
\end{equation}
and from Lebesgue's differentiation theorem last expression vanishes as $\varepsilon\to 0$, for a.e. $u\in[0,1]$. In a similar way, in order  to control  \eqref{eq:char_3},  we just need to use the fact that $\rho^H\in L^{2}(0,T;\mathcal H^1)$, to show that,  for $j\in\{0,1\}$
\begin{equation}
\begin{split}
\lim_{\varepsilon\to 0}\Big|\rho^H_s(j)-(\pi_s* \iota_\varepsilon)(j) \Big|\;=\;0\,.
\end{split}
\end{equation}

 Since $\bb Q^H$ is the weak limit of $\{\bb Q^H_{\mu_n}\}_{n\in \bb N}$, we would like to apply Portmanteau's Theorem to deal with \eqref{eq:char_1}. However,    the function $\iota_\varepsilon$ is not continuous, so this  is, in principle, not possible. However, as in   \cite[Proposition A.3]{fgn1}, by  approximating $\iota_\varepsilon$ by a continuous function,  in such a way that the error vanishes as $\varepsilon \to 0$, we can   bound \eqref{eq:char_1} from above by
\begin{equation}\label{eq:char_5}
\begin{split}
\varliminf_{n\to+\infty}& \bb Q_{\mu_n}^H \bigg( \pi_t(du)=\rho(du)\;:\;  \sup_{0\le t \le T} \Big|\<\rho^H_t,f_t\>-\<\gamma,f_0\>-\int_0^t \<\rho^H_s, (\p_s+\Delta)f_s\>\,ds  \\
-&\int^{t}_{0} 2\<\chi(\pi_s* \iota_\varepsilon) \,\p_uH_s, \p_uf_s\>\,ds  \,ds+ \int^{t}_{0} \Big[\pi_s* \iota_\varepsilon(1)\p_uf_s(1) -\pi_s* \iota_\varepsilon(0)\p_uf_s(0)\Big] \,ds  \Big|>\dfrac{\delta}{3}\bigg)\,,
\end{split}
\end{equation}
plus a term that vanishes as $\eps\to 0$.
Now we make use of the martingale \eqref{dynkin}. Recal that $\bb Q^H_{\mu_n}$ is induced by $\bb P^H_{\mu_n}$ and the empirical measure $\pi$, that is, $\bb Q^H_{\mu_n}=\bb P^H_{\mu_n} \circ \pi^{-1}$. By adding  and subtracting $\int_{0}^{t} n^{2}\mc L_{n}^{H,s}\langle \pi_{s}^{n},f_{s}\rangle ds$ to the term inside last probability,  we can bound \eqref{eq:char_5}  from above by the sum of 
\begin{equation}\label{eq:char_6}
\varliminf_{n\to\infty}\,\bb P_{\mu_n}^H\left( \sup_{0\le t \le T} \left\vert \mc M_{t}^{n,H}(f) \right\vert>\dfrac{\delta}{6}\right)\,,
\end{equation} 
and
\begin{equation}
\label{eq:char_7}
\begin{split}
\varliminf_{n\to\infty}\,\bb P_{\mu_n}^H&\bigg(  \sup_{0\le t \le T} \Big|\int_0^t n^{2}\mc L_{n}^{H,s}\langle \pi_{s}^{n},f_{s}\rangle\,ds-\int_0^t \<\rho^H_s, \Delta f_s\>\,ds   \\
-&\int_0^t2\<\chi(\pi_s* \iota_\varepsilon) \,\p_uH_s, \p_uf_s\>\,ds+ \int^{t}_{0} \Big[\eta_s^{\varepsilon  n}(n-1)\p_uf_s(1) -\eta_s^{\varepsilon  n}(1)\p_uf_s(0)\Big] \,ds   \Big|>\dfrac{\delta}{6}\bigg)\,.
\end{split}
\end{equation}
 By using Doob's inequality together with \eqref{eq:quad_var_1} and \eqref{eq:quad_var_2}, it is easy to show   that (\ref{eq:char_6}) vanishes  as $n\to\infty$. Now, \eqref{eq:char_7} can be rewritten as 
 \begin{equation}
\label{eq:char_7b}
\begin{split}
\varliminf_{n\to\infty}\,\bb P_{\mu_n}^H&\left(  \sup_{0\le t \le T} \Big|\int_0^t n^{2}\mc L_{n}^{H,s}\langle \pi_{s}^{n},f_{s}\rangle\,ds-\int_0^t \langle \pi_{s}^{n} ,\Delta f_{s}\rangle\,ds \right. \\
-&\left.\int_0^t2\<\chi(\pi^n_s* \iota_\varepsilon) \,\p_uH_s, \p_uf_s\>\,ds+ \int^{t}_{0} \Big[\eta_s^{\varepsilon  n}(n-1)\p_uf_s(1) -\eta_s^{\varepsilon  n}(1)\p_uf_s(0)\Big] \,ds   \Big|>\dfrac{\delta}{6}\right)\,.
\end{split}
\end{equation} 
From the computations right below \eqref{dynkin}, we have that
\begin{equation}\label{eq:gen_action}
\begin{split}
n^{2}\mc L_{n}^{H,s}\langle \pi_{s}^{n},f_{s}\rangle\;= \; - nf_s(\tfrac 1n)j_{0,1}^{H_s}+nf_s(\tfrac {n-1}{n})j_{n-1,n}^{H_s} +\sum_{x=1}^{n-2}\nabla_{n}^+f_s(\tfrac xn)j^{H_s}_{x,x+1}(\eta_s). 
\end{split}
\end{equation}
Recall \eqref{eq:current_bulk}. By doing a Taylor expansion on the exponential  in $j_{x,x+1}^{H_s}$,  the  term on the right hand side of last expression is equal to
\begin{equation*}
\begin{split}
\sum_{x=1}^{n-2}\nabla_{n}^+f_s(\tfrac xn)(\eta_s(x)-\eta_s(x+1))+\frac 1n\sum_{x=1}^{n-2}\nabla_{n}^+f_s(\tfrac xn)(\eta_s(x)-\eta_s(x+1))^2\nabla_n^+H(\tfrac xn)
\end{split}
\end{equation*}
plus a term of order $O_H(\tfrac 1n)$. A summation by parts shows that the  term on the right hand side of last expression can  be written as
\begin{equation*}
\nabla_n^+f_s(0)\eta_s(1)-\nabla^+f_s(\tfrac{n-1}{n})\eta_s(n-1)+\frac{1}{n}\sum_{x=1}^{n-1}\Delta_{n}f_s(\tfrac xn)\eta_s(x)\,.
\end{equation*}

Then, we can bound from above the probability in (\ref{eq:char_7}) by the sum of the following terms
\begin{equation}
\label{eq:char_8}
\bb P_{\mu_{n}}^H  \left(\sup_{0\le t \le T} \Big| \int_{0}^{t} \Big(\frac{1}{n}\sum_{x=1}^{n-1}\Delta_{n}f_s(\tfrac xn)\eta_s(x) -\left\langle \pi_{s}^{n},\Delta f_{s} \right\rangle\Big)   \, ds \Big|>\dfrac{\delta}{24}\right),
\end{equation}
\begin{equation}
\label{eq:char_9}
\begin{split}
\bb P_{\mu_{n}}^H  \Big( \sup_{0\le t \le T}  \Big| \int_{0}^{t} \Big(\frac 1n\sum_{x=1}^{n-2}\nabla_{n}^+f_s(\tfrac xn)(\eta_s(x)-\eta_s(x+1))^2\nabla_n^+H_s(\tfrac xn) -2\<\chi(\pi^n_s* \iota_\varepsilon) \,\p_uH_s, \p_uf_s\>\Big)\,ds  \Big|>\dfrac{\delta}{24}\Big),
\end{split}
\end{equation}
\begin{equation}
\label{eq:char_10}
\begin{split}
&\bb P_{\mu_{n}}^H  \left( \sup_{0\le t \le T}  \Big| \int_{0}^{t}\Big( \nabla _n^+f_s(0)\eta_{s}(1)- {\eta}^{\varepsilon n}_{s}(1) \partial_{u}f_{s}(0) \Big)  \, ds  \Big|>\dfrac{\delta}{24}\right),
\end{split}
\end{equation}
plus terms which are very similar to the previous one but related to the  action of the right boundary dynamics, plus other terms that vanish as $n \to +\infty$ due to the fact that $f\in C^{1,2}$.  Now, the proof ends by doing the following arguments. 
From   Taylor expansion on $f_s$ we easily treat the probability in \eqref{eq:char_8}. From  Taylor expansion on both $f_s$ and $H_s$, together with Markov's inequality and  Lemma \ref{lem:rep_lem_general} for the case $x=0$, for $\varphi^n=\partial_uf_s\partial_uH_s$ and $u_x=x/n$ we are able to treat the probability in \eqref{eq:char_9}. Finally, to treat the probability in \eqref{eq:char_10}, we just need to apply Taylor expansion to $f_s$, together with Markov's inequality and  Lemma \ref{lem:rep_lem_general} for the case $\theta\in(1,+\infty)$,  $x=1$, for $\varphi^n=\partial_uf_s$ and $u_x=0$. We leave the details to the reader.

Now we do the sketch of the characterization of limit points in the case $\theta\in(0,1)$. In this case  $f\in C^{1,2}_0$ and $\mathcal F_{\textrm{Dir}}$ was defined in \eqref{eq:weak_eq_theta_menor_um}. Since  $\mathcal F_{\textrm{Dir}}$ and $\mathcal F_{\textrm{Neu}}$ have a very similar expression, the only difference in the proof now is that  the boundary term  in
 \eqref{eq:char_3}
is  replaced by
\begin{align}
\bb Q^H \bigg(& \sup_{0\le t \le T} \int^{t}_{0} \Big[( \beta-\pi_s* \iota_\varepsilon(1))\p_uf_s(1) -( \alpha-\pi_s* \iota_\varepsilon(0))\p_uf_s(0)\Big] \,ds    \Big|>\dfrac{\delta}{3}\bigg).\label{eq:char_3_dir}
\end{align}
All the other terms can be treated exactly as we did in the case $\theta\in(1,+\infty)$. Now, in order to control the last probability we just need to apply Markov's inequality and  Lemma \ref{lem:rep_lem_general} for the case $\theta\in(0,1)$,  $x=1$, for $\varphi^n=\partial_uf_s$ and $u_x=0$. We leave the details to the reader.
\end{proof}
\subsection{Uniqueness of weak solutions}\label{subsec:uniqueness}
In this subsection we assure  uniqueness of weak solutions of  equations  \eqref{eq:weak_eq_theta_menor_um}  and  \eqref{eq:weak_eq_theta_maior_1}.
These proofs are based on the fact that the eigenfunctions of the Laplacian with Neumann (and with 
Dirichlet) boundary conditions are an orthonormal basis. 
Recall that if $\{ \Psi_k\}_k$ is an orthonormal basis of $L^2[0,1]$, then  for all $f\in L^2$, 
\begin{equation}\label{UN1}
\int f^2\,du\;=\;\sum_{k\geq 0}\<f,\Psi_k\>^2\,.
\end{equation}

\subsubsection{The Neumann case: \texorpdfstring{$\theta\in(1,+\infty)$}{theta<1}}
Let $\rho^1$ and $\rho^2$ be weak solutions of \eqref{eq:weak_eq_theta_maior_1} such that $\rho^1_0=\gamma=\rho^2_0$. Denote $\overline{\rho}=\rho^1-\rho^2$ and consider the set $\{\psi_k\}_{k\geq 0}$ of eigenfunctions of Laplacian with Neumann boundary conditions, i.e., $\psi_k(u)=\sqrt{2}\cos(k\pi u)$ for $k\geq 1$ and $\psi_0(u)=1$, which is, in fact,  an orthonormal basis of $L^2([0,1])$.
Now, define
\begin{equation*}\label{UN11}
\mc R(t)\;=\; \sum_{k\geq 0}\frac{1}{2c_k}\,\< \overline{\rho}_t , \psi_k\>^2\,,
\end{equation*}
where $c_k=(k\pi)^2+1$.
Our goal here is to prove that 
\begin{equation}\label{UN2}
\mc R'(t)\;\lesssim \;\mc R(t)\,.
\end{equation}
because, provided by this inequality,  Gronwall's inequality permits to conclude that $R'(t)\leq 0$, which leads  to $\rho^1=\rho^2$ a.e.
To achieve our goal, we start by computing the derivative of $\mc R$, which is given by
\begin{equation}\label{UN3}
\mc R'(t)\;=\; \sum_{k\geq 0}\frac{1}{c_k}\,\< \overline{\rho}_t , \psi_k\>\,\frac{d}{dt}\< \overline{\rho}_t , \psi_k\>\,.
\end{equation}
Using the integral equation  \eqref{eq:weak_eq_theta_maior_1}, the expression $\pfrac{d}{dt}\< \overline{\rho}_t , \psi_k\>$ in the last display above  is equal to
\begin{equation*}
\<\bar\rho_t,\Delta\psi_k\>\,+\,2\,\<\overline{\chi}\,\p_uH_t,\p_u \psi_k\>\,,
\end{equation*}
where $\overline{\chi}=\chi(\rho^1_t)-\chi(\rho^2_t)$. Note that $\<\overline{\rho}_t,\Delta\psi_k\>=-(k\pi)^2\<\overline{\rho}_t,\psi_k\>$.
Plugging this into \eqref{UN3}, we get
\begin{equation}\label{UN4}
\mc R'(t)\;=\; -\sum_{k\geq 0}\frac{(k\pi)^2}{c_k}\,\< \overline{\rho}_t , \psi_k\>^2+
\sum_{k\geq 0}\frac{2}{c_k}\,\< \overline{\rho}_t , \psi_k\>\<\overline{\chi}\p_uH_t,\p_u \psi_k\>\,.
\end{equation}
Now, Young's inequality allows to bound the previous expression by
\begin{equation}\label{UN5}
\frac{1}{A}\sum_{k\geq 0}\frac{1}{c_k}\,\< \overline{\rho}_t , \psi_k\>^2+A\sum_{k\geq 0}\frac{1}{c_k}\<\overline{\chi}\p_uH_t,\p_u \psi_k\>^2\,,
\end{equation}
where the specific value $A>0$ will be chosen later.
Now, observe that $\p_u \psi_k(u)=-k\pi\,\varphi_k(u)$ with $\varphi_k(u)=\sqrt{2}\sin(k\pi u)$ for $k\geq 1$ and $\varphi_0(u)=1$.
Therefore we can bound the second  sum in the display  by
\begin{equation*}
\sum_{k\geq 0}\frac{(k\pi)^2}{c_k}\<\overline{\chi}\,\p_uH_t,\varphi_k\>^2\;\leq\;\sum_{k\geq 0}\<\overline{\chi}\,\p_uH_t,\varphi_k\>^2\,,
\end{equation*}
because $c_k=(k\pi)^2+1$. Since $\{\varphi_k\}_{k\geq 0}$ is an orthonormal basis of $L^2[0,1]$, it is possible to use \eqref{UN1} to write the last sum as 
$\int_0^1\big(\overline{\chi}\p_uH_t\big)^2\,du$. Using the definition of $\overline{\chi}$ and the fact that $\chi$ is a Lipschitz function, we have
$\int_0^1\big(\overline{\chi}\,\p_uH_t\big)^2\,du\,\leq\,C_H\int_0^1\big(\overline{\rho}_t\big)^2\,du.$
Then using again \eqref{UN1} to rewrite  $\int \big(\overline{\rho}_t\big)^2du$ as 
$\sum_{k\geq 0}\< \overline{\rho}_t, \psi_k\>^2$, we get that  
\begin{equation*}
\mc R'(t) \;\leq\;\sum_{k\geq 0}\Big(-\frac{(k\pi)^2}{c_k}+\frac{1}{Ac_k}+C_HA\Big)\,\< \overline{\rho}_t , \psi_k\>^2\,.
\end{equation*}
Now choosing $A=\frac{1}{C_H}$ we finally get \eqref{UN2}.

\subsubsection{The Dirichlet case: \texorpdfstring{$\theta\in(0,1)$}{theta>1}}
This proof in this case is  similar to the one	  above, considering the set $\{\psi_k\}_{k\geq 0}$ of eigenfunctions of the Laplacian with Dirichlet boundary conditions, where $\psi_k(u)=\sqrt{2}\sin(k\pi u)$. Details are omitted here.
\section{Large deviations upper bound}\label{s5}
In this section we establish the large deviations uper bound, first for compact sets, then to closed sets. To do so, the following notion is relevant.
We say a family of sets $\{\Gamma_\lambda\}_\lambda$ is \textit{super-exponentially small} whenever
\begin{equation*}
\varlimsup_{\lambda}\frac{1}{\lambda}\log P\big[\Gamma_\lambda\big]\;=\;-\infty
\end{equation*}
where the limsup in $\lambda$ (or more parameters) depends on the context.

Let us describe the line of ideas for the proof of the upper bound. 
By the perturbed model presented in Section~\ref{s4}, we have that
\begin{equation*}
\bb P_{\delta{\eta^n}}\big[\{\pi^n\in\mc C\}\cap\mc G\big]\;=\;\bb E_{\delta{\eta^n}}^H\bigg[\textbf{1}_{\{\pi^n\in\mc C\}\cap\mc G} \cdot\frac{\dradon\bb P_{\delta_{\eta^n}}}{\dradon\bb P^{H}_{\delta_{\eta^n}}}\bigg|_{\mc F_T} \bigg]\,,
\end{equation*}
where the Radon-Nikodym derivative above has been computed in \eqref{correct_RN_1} and the {\it{good set}} $\mc G$ will bedefined in \eqref{G_set} is a set such that its complement is super-exponentially small.
In  Subsection~\ref{R-N_der_cont} we consider this Radon-Nikodym derivative  restricted to the {\it{good set}} $\mc G$,  obtaining the expression  of  the large deviations rate functional. Finally, in Subsection \ref{sub_5.3} we prove the upper bound for compact sets, and in Subsection \ref{sub_5.4} we extend it to closed sets  by a standard argument on exponential tightness.

\subsection{Superexponentially small sets}\label{small_sets}
Define the set
\begin{equation}\label{B_set}
B_{\eps,\delta}^{H,\theta}\;:=\;\Big\{\eta_.\in \Ddiscreto\;:\;\Big\vert\int_0^TV_{\eps,x}^{H,\theta}(\eta_s,s)\,ds\Big\vert\leq\delta,\,\,x=0,1,n-1\Big\}\,,
\end{equation}
where $V_{\eps,x}^{H,\theta}(\eta_s,s)$, as defined in \eqref{eq:V_0} and \eqref{eq:V_theta}, is taken under  the particular choice\break  $\varphi_s^n(u_x)=\p_uH_s(\frac xn)$.
By Proposition~\ref{lem:rep_lem_sup}, we know that
\begin{equation}\label{B_set_lim}
 \varlimsup_{\eps\downarrow 0} \varlimsup_{n\to\infty}\tfrac 1n \log \bb P_{\subm}\Big[\big (B_{\eps,\delta}^{H,\theta}\big)^\complement\Big]\;=\;-\infty
\end{equation}
for all $\delta,\theta>0$ and $H\in {\bf{C}}_\theta$. Before introducing the next  super-exponential small set, which is somewhat technical, let us discuss its rather simple motivation. Keep in mind that our  objective is to 
asymptotically deal with the Radon-Nikodym derivative, which  will lead us to the large deviations rate functional.

Recall that $\eta^{\eps n}_s(x)=\pi^n_s*\iota_\eps(\tfrac{x}{n})$, where the approximation of the identity $\iota_\eps(u)(v)$ has been  defined in \eqref{iota}.
Although important,  the extra regularity given by this convolution is not enough to handle 
  limits at the boundaries, since, in general, $\pi*\ioe$ is not a continuous function. To overcome this, we shall (super-exponentially) replace  $\pi^N*\ioe$ by $(\pi^N*\iog)*\ioe$, where $\iog$ is a \textit{smooth} approximation of the identity that is defined as follows. 

Fix $f:[0,1]\to \bb R_+$ a 
continuous function with support  contained in $[\frac{1}{4},\frac 34]$,
$0\leq f \leq 4$, $f(0)>0$, $\int fd\lambda=1$ and symmetric around zero,  that is,  satisfying  $f(u)=f(1-u)$ for all $u\in [0,1]$. Define  the continuous approximation of identity
$\iog$  by
$
\iog(u)=\pfrac{1}{\tau} f(\pfrac{u}{\tau})$. 
As in Lemmas 5.1, 5.2 and 5.3 of \cite{FN2017}, changing $\pi^n*\ioe$ by $(\pi^n*\ioe)*\iog$ inside the expression of the Radon-Nikodym derivative has a cost of order $O_H(\eps)+O_H(\pfrac{\tau}{\eps})$.

Since the rate functional is equal to  infinite on trajectories $\pi\in \DM$ such that $\mc E(\pi)<\infty$, another important remark about the double convolution is that $\mc E((\pi*\iog)*\ioe)<\infty$ for all $\pi\in \DM$. 

 The next set what we introduce is the set that handles with trajectories with finite energy, that is,   the set $\{\pi\in \DM\;;\;\mc E(\pi)<\infty\}$. Since this set is not closed with respect to the Skorohod topology of $\DM$, this is an obstacle to apply the \textit{Minimax Lemma} (see \cite{kl}, page 364, Lemma~3.3), which is an important device in the proof of the large deviations' upper
bound. To overcome this difficult, we introduce   the following sets.
Let $A_{k,l}$ and $A_{k,l}^{\zeta,\tau}$  be the subsets of trajectories  given by
\begin{equation}\label{A_set}
\begin{split}
&A_{k,l}\;=\;\{\pi\in \DM:\max_{1\leq j\leq k}\mc E_{H_j}(\pi)\leq l\}\,,
\\
&A_{k,l}^{\zeta,\tau}\;=\;\left\{\pi\in \DM:(\pi*\iog)* \ioz 	\in A_{k,l}\right\}\,.
\end{split}
\end{equation}
It is worth to emphasize that $\ioz$ is the identity approximation defined in \eqref{iota}, where the letter $\eps$ has been replaced by $\zeta$ for  aesthetic reasons.
For fixed $\zeta,\tau,k,l$, the set $A_{k,l}^{\zeta,\tau}$ is closed because the function $\pi\mapsto\mc E_{H}((\pi*\iog)*\ioz)$ is continuous in the Skorohod topology,  see \cite{FN2017} for instance.
We claim that, for fixed $k$ and $l$,
\begin{equation}\label{A_set_lim}
\varlimsup_{\zeta\downarrow 0} \varlimsup_{\tau\downarrow 0}
\varlimsup_{n\to\infty}\tfrac{1}{n}\log \bb P_{\subm}\Big[\pi^n\in (A_{k,l}^{\zeta,\tau})^{\complement}\Big]
\;\leq\; -l \,.
\end{equation}
This is a consequence of
Corollary \ref{cor_super_energy} and the fact that $(\pi^n*\iog)*\ioz -\pi^n*\ioz	=O (\pfrac{\tau}{\zeta})$, see Proposition~5.9 in \cite{FN2017} for details.

Another  technical problem that arises in this setting is the fact that the empirical measure does not have a density with respect to the Lebesgue measure. An extra family of sets is then defined to circumvent this issue.
Fix a sequence $\{ F_i\}_{i\geq 1}$ of smooth non negative functions dense, with respect to the uniform topology, in the subset of non-negative 
continuous functions.
 For $m\geq 1$ and $j\geq 1$,  define the set
\begin{equation}\label{E_set} E_m^j\; =\; \Big\{\pi\in \DM\;;\;0\leq \<\pi_t,F_i\>\leq \int_{0}^1 F_i(u)\,du
+\pfrac{1}{j}\Vert F'_i\Vert_\infty ,\,
0\leq t\leq T,\; i=1, \dots,m\,\Big\}\,.
\end{equation}
It is a simple task to check that $\DMO=\cap_{j\geq 1}\cap_{m\geq 1} E_m^j$.
Given $m\geq 1$ and $j\geq 1$,  the following limsup holds:
\begin{equation}\label{E_set_lim}
\varlimsup_{n\to\infty}\pfrac{1}{n}\log \bb P_{\subm}\big[\pi^n\in (E_m^j)^{\complement}\big]\;=\;-\infty\,.
\end{equation}
This result is very similar to the one in \cite[Subsection 6.3]{flm} and   \cite{FN2017}, thus its   proofs  is omitted here.
For the case $\theta\in(1,+\infty)$, we also need to assure that trajectories  that do not  conserve mass are negligible. We thus  introduce one more one set. For $\lambda>0$, 
let 
\begin{equation}\label{F_set}
\mc F_{\lambda}^\theta\;=\begin{cases}\;\big\{\pi\in\DM:|\<\pi_t,1\>-\<\pi_0,1\>|\leq \lambda,\;0\leq t\leq T\big\}\,,\; &\mbox{ if } \theta\in(1,+\infty)\,,\\\;\DM\,,\; &\mbox{ if } \theta\in(0,1)\,.
\end{cases}
\end{equation}
This is a closed set and below we  prove that it is super-exponentially small.
\begin{lemma}\label{lemma_5.1} For all $\theta\in(1,+\infty)$ and all $\lambda>0$, it holds that
\begin{equation}\label{F_set_lim}
\varlimsup_{n\to\infty}\tfrac{1}{n}\log \bb P_{\subm}\Big[ \pi^n\in(\mc F_{\lambda}^\theta)^{\complement}\Big]\;=\;-\infty\,.
\end{equation}
\end{lemma}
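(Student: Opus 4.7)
The core observation is that the total mass $\sum_{x=1}^{n-1}\eta_t(x) = n\<\pi^n_t,1\>$ changes only through the boundary generator $n^{-\theta}\mc L_{n,b}$, since every bulk exchange $\eta\mapsto\eta^{x,x+1}$ preserves the number of particles. Once amplified by the diffusive scaling $n^2$, the effective rate of the boundary dynamics is $n^{2-\theta}$, which is $o(n)$ whenever $\theta\in(1,+\infty)$. This is the mechanism responsible for the super-exponentially small probability, and the plan is to make it quantitative through a standard Dynkin exponential martingale.

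Fix $\xi>0$ and consider $f_\xi(\eta):=\xi\sum_{x=1}^{n-1}\eta(x)$, so that $f_\xi(\eta_t)-f_\xi(\eta_0)=\xi n(\<\pi^n_t,1\>-\<\pi^n_0,1\>)$. Because $f_\xi(\eta^{x,x+1})=f_\xi(\eta)$, one has $\mc L_{n,0}e^{f_\xi}=0$, and a direct calculation gives
\begin{equation*}
e^{-f_\xi(\eta)}\,n^2\mc L_n e^{f_\xi}(\eta)\;=\;n^{2-\theta}\!\!\sum_{x\in\{1,n-1\}}\!\!\Big[r_x(1-\eta(x))(e^\xi-1)+(1-r_x)\eta(x)(e^{-\xi}-1)\Big],
\end{equation*}
which is bounded in absolute value by a constant $C(\xi)$ depending only on $\xi,\alpha,\beta$. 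By the standard construction of the Dynkin exponential martingale, the process
\begin{equation*}
Z_t\;:=\;\exp\!\bigg(f_\xi(\eta_t)-f_\xi(\eta_0)-\int_0^t e^{-f_\xi(\eta_s)}\,n^2\mc L_n e^{f_\xi}(\eta_s)\,ds\bigg)
\end{equation*}
is a nonnegative $\bb P_{\subm}$-martingale with $Z_0=1$.

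Using the deterministic bound $\big|\int_0^t e^{-f_\xi(\eta_s)}n^2\mc L_n e^{f_\xi}(\eta_s)\,ds\big|\leq TC(\xi)n^{2-\theta}$ and Doob's maximal inequality applied to $Z$, one gets
\begin{equation*}
\bb P_{\subm}\!\Big[\sup_{0\leq t\leq T}\big(\<\pi^n_t,1\>-\<\pi^n_0,1\>\big)\geq\lambda\Big]\;\leq\;\exp\!\big(-\xi\lambda n+TC(\xi)\,n^{2-\theta}\big).
\end{equation*}
Dividing by $n$ and using $\theta>1$, the error $TC(\xi)n^{1-\theta}$ vanishes in the limit, so $\varlimsup_n \tfrac{1}{n}\log(\,\cdot\,)\leq -\xi\lambda$; letting $\xi\to+\infty$ yields $-\infty$. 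The same computation with $-\xi$ in place of $\xi$ (equivalently, using $-f_\xi$) handles the opposite tail $\sup_t(\<\pi^n_0,1\>-\<\pi^n_t,1\>)\geq\lambda$, and a union bound over the two events gives \eqref{F_set_lim}.

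There is no serious obstacle here: the argument is a clean exponential Chebyshev combined with Doob's inequality. The only substantive input is the exact conservation of mass by the bulk dynamics, which forces the compensator of $Z_t$ to carry the factor $n^{-\theta}$ from the boundary rates. The hypothesis $\theta>1$ is used at exactly one point, namely to send $n^{1-\theta}\to 0$ after dividing the log-probability by $n$; at the critical value $\theta=1$ this argument just fails, consistent with the critical case being excluded from the present work.
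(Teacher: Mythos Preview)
Your argument is correct and complete. The exponential-martingale route you take is genuinely different from the paper's, so let me briefly compare.

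The paper works with the Harris graphical construction: the change of total mass is dominated in absolute value by the total number of rings, up to time $T$, of the four boundary Poisson clocks (with parameters $\alpha n^{2-\theta}$, $(1-\alpha)n^{2-\theta}$, $\beta n^{2-\theta}$, $(1-\beta)n^{2-\theta}$), so it suffices to show that each $N_T^{i,j}\geq \lambda n$ has super-exponentially small probability. This is then a direct Cram\'er/Markov bound for a Poisson variable with parameter $cTn^{2-\theta}$, giving $\tfrac{1}{n}\log\bb P[N_T\geq\lambda n]\leq \lambda - cTn^{1-\theta} - \lambda\log\!\big(\tfrac{\lambda}{cTn^{1-\theta}}\big)\to-\infty$.

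Your approach avoids the explicit coupling with Poisson clocks and works directly through the generator via the exponential martingale $Z_t$; the whole point is that $e^{-f_\xi}\mc L_{n,0}e^{f_\xi}=0$ forces the compensator to carry the factor $n^{2-\theta}$ from the boundary. The two proofs input exactly the same structural fact (only the boundary moves the mass, and its rate is $O(n^{2-\theta})$) and both use $\theta>1$ at the single step where $n^{1-\theta}\to 0$. The graphical-construction proof is perhaps more concrete and reusable for the related Lemma~\ref{lem:current_exp} on boundary currents, while your martingale argument is cleaner in that it handles the supremum in $t$ and the optimization in $\xi$ in one stroke and does not require identifying which Poisson clock is responsible for each transition.
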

\begin{proof}
Let us appeal to the Harris graphical construction of the process.
 Let $N_t^{+,1}$ and $N_t^{-,1}$  be the Poisson processes associated to the site $x=1$, whose parameters are $\alpha n^{2-\theta}$ and $(1-\alpha) n^{2-\theta}$, respectively.  At an arrival of the Poisson process  $N_t^{+,1}$, if there is no  particle at the site $1$, a new particle is dropped there. And at an  arrival of the Poisson process  $N_t^{-,1}$, if there is a  particle at the site $1$, it leaves the system.
Analogously, let  $N_t^{+,n-1}$ and $N_t^{-,n-1}$  be the Poisson processes associated to the right site $x=n-1$, whose parameters are $\beta n^{2-\theta}$ and $(1-\beta) n^{2-\theta}$, respectively,  with the same action of creation and destruction of particles at the site $x=n-1$.
Since each particle contributes with a mass $1/n$ to the empirical measure, we get that
\begin{align*}
\Big[ \pi^n\in(\mc F_{\lambda}^\theta)^{\complement}\Big] & \;\subset\; 
\bigcup_{i\in\{+,-\}}\bigcup_{j\in\{1,n-1\}} \Big[\exists\, t\in[0,T]:  N_t^{i,j}\geq \lambda n\Big]\\
& \;\subset\; 
\bigcup_{i\in\{+,-\}}\bigcup_{j\in\{1,n-1\}} \Big[N_T^{i,j}\geq \lambda n\Big]\,.
\end{align*} 
By \eqref{sum_log_super}, in order to prove \eqref{F_set_lim}, 
it is enough to prove that 
\begin{equation*}
\varlimsup_{n\to\infty}\tfrac{1}{n}\log \bb P_{\subm}\Big[N_T^{i,j}\geq \lambda n\Big]\;=\;-\infty
\end{equation*}
for $i\in\{+,-\}$ and $j\in\{1,n-1\}$. Let us first review   standard facts on large deviations of i.i.d.\ random variables. Given i.i.d.\ random variables $\{X_i\}_{i\geq 1}$ with  Poisson distribution of parameter $a>0$, it is  deduced by Markov's inequality that
\begin{equation}\label{largePoisson}
\frac{1}{n}\log \bb P_{\subm}\Big[\frac{X_1+\cdots+X_n}{n}\geq x\Big]\;\leq\; x-a-x\log \frac{x}{a}\,,\quad \forall\, x>0\,.
\end{equation}
 Since the number of arrivals of $N_T^{i,j}$ is a Poisson process of parameter $cn^{2-\theta}$  for some $c>0$, and a sum of independent variables of Poisson distribution  has Poisson distribution whose parameter is given by the sum of the parameters, we have that 
\begin{equation*}
N_T^{i,j}\;\sim\; Y_1+\cdots+Y_{n}\,,
\end{equation*} 
where $Y_i\sim$ Poisson$(cn^{1-\theta})$. From \eqref{largePoisson}, 
\begin{align*}
\frac{1}{n}\log\bb P_{\subm} \Big[N_T^{i,j}\geq \lambda n\Big]&\;=\; \frac{1}{n}\log\bb P_{\subm} \Big[\frac{Y_1+\cdots+Y_{n}}{n}\geq \lambda \Big]\;\leq\; \lambda-cn^{1-\theta}-\lambda \log \bigg(\frac{\lambda}{cn^{1-\theta}}\bigg)\,,
\end{align*}
which converges to $-\infty$, when $n\to\infty$, since $\theta\in(1,+\infty)$, hence finishing the proof.
\end{proof}
\begin{lemma}\label{lem:current_exp}
For all $\theta\in(1,+\infty)$ and all $\lambda>0$, it holds
\begin{equation}\label{J_super_small}
\varlimsup_{n\to\infty}\tfrac{1}{n}\log \bb P_{\subm}\Big[ \tfrac{1}{n}J^n_{0,1}(t) >\lambda\Big]\;=\;\varlimsup_{n\to\infty}\tfrac{1}{n}\log \bb P_{\subm}\Big[ \tfrac{1}{n}J^n_{n-1,n}(t) >\lambda\Big]\;=\;-\infty\,.
\end{equation}
\end{lemma}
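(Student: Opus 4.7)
The strategy is to dominate the boundary current by a Poisson process and then reduce to the same computation used in the proof of Lemma~\ref{lemma_5.1}. Recall from the Harris construction that creations at site $1$ occur at the arrivals of a Poisson process $N_t^{+,1}$ of rate $\alpha n^{2-\theta}$ (only effective when site $1$ is vacant), while destructions at site $1$ occur at the arrivals of an independent Poisson process $N_t^{-,1}$ of rate $(1-\alpha)n^{2-\theta}$ (only effective when site $1$ is occupied). Since $J^n_{0,1}(t)$ is the number of actual creations minus the number of actual destructions at site~$1$ up to time $t$, we have the pointwise bound
\begin{equation*}
J^n_{0,1}(t)\;\leq\; N^{+,1}_t\;\leq\; N^{+,1}_T\,,\qquad t\in[0,T]\,.
\end{equation*}
An identical argument at the right boundary gives $J^n_{n-1,n}(t)\leq N^{-,n-1}_T$.

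Hence I would write
\begin{equation*}
\bb P_{\subm}\big[\tfrac{1}{n}J^n_{0,1}(t)>\lambda\big]\;\leq\; \bb P_{\subm}\big[N^{+,1}_T\geq \lambda n\big]\,,
\end{equation*}
and since $N^{+,1}_T$ is Poisson of parameter $\alpha T n^{2-\theta}$, I would write it as the sum $Y_1+\cdots+Y_n$ of $n$ i.i.d.\ Poisson$(\alpha T n^{1-\theta})$ random variables. Applying the standard one-sided Chebyshev/Cram\'er bound for Poisson variables recalled in~\eqref{largePoisson} with $a=\alpha T n^{1-\theta}$ and $x=\lambda$, I get
\begin{equation*}
\tfrac{1}{n}\log \bb P_{\subm}\big[N^{+,1}_T\geq \lambda n\big]\;\leq\; \lambda-\alpha T n^{1-\theta}-\lambda \log\frac{\lambda}{\alpha T n^{1-\theta}}\,,
\end{equation*}
which tends to $-\infty$ as $n\to\infty$ since $\theta>1$ forces $n^{1-\theta}\to 0$ and thus the logarithmic term diverges to $+\infty$ (multiplied by $-\lambda$). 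The same computation with $(1-\beta)$ in place of $\alpha$ handles the right boundary, and \eqref{sum_log_super} allows combining the two contributions via a union bound without loss.

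The argument is essentially a direct corollary of the bounds already established for Lemma~\ref{lemma_5.1}; indeed Lemma~\ref{lemma_5.1} controls the total number of boundary Poisson arrivals, and $|J^n_{0,1}(t)|$ and $|J^n_{n-1,n}(t)|$ are pointwise dominated by these totals. The only (very minor) point needing a word of care is that $J^n_{0,1}$ is a \emph{signed} current, so in principle one should also bound its negative excursions, but the one-sided inequality $\{J^n_{0,1}(t)/n>\lambda\}\subset\{N^{+,1}_T\geq\lambda n\}$ is already all that the statement requires; no super-exponential control on the absolute value of the current is claimed here.
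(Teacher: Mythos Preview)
Your proof is correct and follows essentially the same approach as the paper's own proof: dominate the boundary current by the appropriate Poisson clock from the Harris construction and apply the Cram\'er bound \eqref{largePoisson} exactly as in Lemma~\ref{lemma_5.1}. Your version is in fact slightly more explicit, identifying the precise Poisson process ($N^{+,1}_T$ or $N^{-,n-1}_T$) and constant ($\alpha T$ or $(1-\beta)T$) rather than the paper's generic ``$cn^{2-\theta}$ for some $c>0$'', and your closing remark about the signed current correctly anticipates a natural question.
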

\begin{proof}
The argument is similar to the one in the  proof of the Lemma~\ref{lemma_5.1}. The current $J^n_{0,1}(t)$ of particles through the left boundary is stochastically dominated by a Poisson random variable $N_T^{i,j}$ of parameter $cn^{2-\theta}$  for some $c>0$. Thus, by the same large deviation argument of \eqref{largePoisson}, we get 
\begin{align*}
\frac{1}{n}\log\bb P_{\subm} \Big[\tfrac{1}{n}J^n_{0,1}(t)\geq \lambda \Big]&\;\leq \; \frac{1}{n}\log\bb P_{\subm} \Big[\tfrac{1}{n}N_T^{i,j}\geq \lambda \Big]\\
&\;=\;\frac{1}{n}\log\bb P_{\subm} \Big[N_T^{i,j}\geq \lambda n \Big] \;\leq\; \lambda-cn^{1-\theta}-\lambda \log \frac{\lambda}{cn^{1-\theta}}\,,
\end{align*}
leading to \eqref{J_super_small}.  The argument for $J^n_{n-1,n}(t)$ is analogous.
\end{proof}

To conclude this subsection, define the set 
\begin{equation}\label{G_set}
{\mc{G}}_{H,\zeta,\tau,\lambda,\delta,\eps}^{\theta,n,k,l,m,j}\;:=\;\{\pi^n \in  A_{k,l}^{\zeta,\tau}\cap E_m^j\cap \mc F_\lambda^\theta\}\cap B_{\delta,\eps}^H\;\subset\;\Ddiscreto\,,
\end{equation} 
where the sets $ A_{k,l}^{\zeta,\tau}$, $E_m^j$,  $\mc F_\lambda^\theta$ and $B_{\delta,\eps}^H$ were defined in \eqref{A_set}, \eqref{E_set}, \eqref{F_set} and \eqref{B_set}, respectively.  Since
\begin{equation*}
\begin{split}&
	\varlimsup_{n\to\infty}	\tfrac{1}{n}\log \bb P_{\subm}\Bigg[\,\Big({\mc G}_{H,\zeta,\tau,\lambda,\delta,\eps}^{\theta,n,k,l,m,j}\Big)^\complement\Bigg]\\&\leq  \max\Bigg\{
 \varlimsup_{n\to\infty}\pfrac{1}{n}
	\log \bb P_{\subm}\Big[\pi^n\in (A_{k,l}^{\zeta,\tau})^{\complement}\Big]\,,\;\varlimsup_{n\to\infty}\pfrac{1}{n}
	\log \bb P_{\subm}\Big[\pi^n\in (E_m^j)^{\complement}\Big]\,,\\
	& \quad\quad\quad\quad\varlimsup_{n\to\infty}\pfrac{1}{n}
	\log \bb P_{\subm}\Big[\pi^n\in (\mc F_\lambda^\theta)^{\complement}\Big]\,,\;\;\varlimsup_{n\to\infty}\,\pfrac{1}{n}
	\log \bb P_{\subm}\Big[(B^H_{\delta,\eps})^{\complement}\Big]\Bigg\}
	\end{split}
	\end{equation*}
	and  due to   \eqref{B_set_lim},  \eqref{A_set_lim},   \eqref{E_set_lim} and  \eqref{F_set_lim},
we deduce that
\begin{equation}\label{G_set_lim}
	\varlimsup_{\eps\downarrow 0} 	\varlimsup_{\zeta\downarrow 0} \varlimsup_{\tau\downarrow 0}\varlimsup_{n\to\infty}	\tfrac{1}{n}\log \bb P_{\subm}\Bigg[\,\Big({\mc{G}}_{H,\zeta,\tau,\lambda,\delta,\eps}^{\theta,n,k,l,m,j}\Big)^\complement\Bigg]\leq \,-\,l\,.
\end{equation}

\subsection{Radon-Nikodym derivative (continuation)}\label{R-N_der_cont}

In order to write the Radon-Nikodym derivative in a proper way we  start by introducing some notations.
Having in mind that  $\mc E((\pi*\iog)*\ioe)<\infty$ for all $\pi\in \DM$, we define
 the functional
\begin{equation}\label{Jmuito}
J_{H,\zeta,\tau,\lambda,\eps}^{\theta,k,l,m,j}(\pi)\;=\;
\begin{cases}
\;\ell_H^\theta\big((\pi*\iog)*\ioe\big)-\Phi_H\big((\pi*\iog)*\ioe\big), &  \mbox{if}\,\,\,\,\pi\in A_{k,l}^{\zeta,\tau}\cap E_m^j \cap \mc F_\lambda^\theta\,,\\ 
\;+\infty, &\mbox{otherwise}\,.
\end{cases}
\end{equation}
The next  result establishes the connection between  $J_{H,\zeta,\tau,\lambda,\eps}^{\theta,k,l,m,j}(\pi)$ and  the functional $J_H^\theta(\pi)$ defined in \eqref{functional:J}.
\begin{proposition}\label{labels} For all $\pi\in \DM$,
	\begin{equation*}
	\varlimsup_{\eps\downarrow 0}
	\varlimsup_{l\to\infty}\varlimsup_{k\to\infty}
	\varlimsup_{\zeta\downarrow 0}
	\varlimsup_{\tau\downarrow 0}
	\varlimsup_{\lambda\downarrow 0}
	\varlimsup_{j\to\infty}\varlimsup_{m\to\infty}	
	J_{H,\zeta,\tau,\lambda,\eps}^{\theta,k,l,m,j}(\pi)
	\;\geq\; J_H^\theta(\pi)\,.
	\end{equation*}
\end{proposition}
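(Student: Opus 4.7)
The plan is to split into cases depending on whether $J_H^\theta(\pi)$ is finite, exploiting that $J_{H,\zeta,\tau,\lambda,\eps}^{\theta,k,l,m,j}(\pi)$ equals $\ell_H^\theta(\pi*\iog*\ioe)-\Phi_H(\pi*\iog*\ioe)$ inside the good set $A_{k,l}^{\zeta,\tau}\cap E_m^j\cap \mc F_\lambda^\theta$ and takes the value $+\infty$ outside. Whenever $J_H^\theta(\pi)=+\infty$ (which occurs exactly when $\pi\notin\mc F_\theta$, or $\mc E(\pi)=+\infty$, or $\pi\notin\DMO$), I will exhibit a parameter range in which $\pi$ escapes one of the three good sets, forcing $J_{H,\zeta,\tau,\lambda,\eps}^{\theta,k,l,m,j}(\pi)=+\infty$ so that the iterated limsup is $+\infty$. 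Concretely: if $\pi\notin\mc F_\theta$ (possible only when $\theta>1$), there is $\lambda_0>0$ such that $\pi\notin\mc F_\lambda^\theta$ for every $\lambda<\lambda_0$; if $\mc E(\pi)=+\infty$, density of $\{H_j\}$ in $C^{0,1}_c$ produces $j^\star$ with $\mc E_{H_{j^\star}}(\pi)>l+1$, and the continuity $\mc E_{H_{j^\star}}(\pi*\iog*\ioz)\to\mc E_{H_{j^\star}}(\pi)$ as $\tau,\zeta\downarrow 0$ then drives $\pi$ outside $A_{k,l}^{\zeta,\tau}$ for $k\geq j^\star$ and small $\tau,\zeta$; finally, if $\pi\notin\DMO$, then $\pi\notin E_m^j$ for $m,j$ large because $\DMO=\bigcap_{m,j}E_m^j$.

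In the substantive case $J_H^\theta(\pi)<+\infty$, the hypotheses are $\pi\in\mc F_\theta\cap\DMO$ and $\pi_t(du)=\rho_t(u)\,du$ with $\rho\in L^2(0,T;\mc H^1)$. I will verify directly that $\pi\in E_m^j$ for every $m,j$ (since $\rho_t\in[0,1]$), that $\pi\in\mc F_\lambda^\theta$ for every $\lambda>0$ (trivial for $\theta\in(0,1)$, mass-conservation for $\theta>1$), and that for fixed $k$ and any $l>\max_{1\leq j\leq k}\mc E_{H_j}(\pi)$, the same continuity of the energy under double convolution places $\pi$ in $A_{k,l}^{\zeta,\tau}$ for $\tau,\zeta$ small enough. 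Consequently, in the relevant parameter range,
\[
J_{H,\zeta,\tau,\lambda,\eps}^{\theta,k,l,m,j}(\pi)\;=\;\ell_H^\theta(\widetilde\rho)-\Phi_H(\widetilde\rho),\qquad \widetilde\rho:=\rho*\iog*\ioe,
\]
and I would prove convergence to $\ell_H^\theta(\rho)-\Phi_H(\rho)$ as $\tau,\eps\downarrow 0$. The $L^2$-convergence $\widetilde\rho\to\rho$ on $[0,T]\times[0,1]$ handles all the bulk integrals in $\ell_H^\theta$, while the Lipschitz continuity of $\chi$ on $[0,1]$ combined with dominated convergence yields $\Phi_H(\widetilde\rho)\to\Phi_H(\rho)$.

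The hard part will be the boundary contribution in the case $\theta\in(1,+\infty)$, where $\ell_H^\theta$ contains the trace-type integral $\int_0^T[\rho_s(1)\p_uH_s(1)-\rho_s(0)\p_uH_s(0)]\,ds$. I would control it by showing $\widetilde\rho_s(j)\to\rho_s(j)$ for $j\in\{0,1\}$ in the $L^2(0,T)$ sense, combining $\rho\in L^2(0,T;\mc H^1)$, continuity of the trace operator $\mc H^1(0,1)\to\bb R$, and the fact that the prior smoothing by $\iog$ ensures $\rho*\iog*\ioe\to\rho$ in the $\mc H^1$-norm for a.e.\ $s$, which compensates for the mere one-sided indicator form of $\ioe$. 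For $\theta\in(0,1)$ the boundary terms in $\ell_H^\theta$ depend only on the constants $\alpha,\beta$ and on derivatives of $H$, so no analogous trace convergence is required and the argument terminates at the bulk estimates above.
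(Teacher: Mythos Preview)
Your plan is essentially correct and follows the same logical skeleton as the paper's proof: both arguments peel off the constraint sets $E_m^j$, $\mc F_\lambda^\theta$, $A_{k,l}^{\zeta,\tau}$ in the order dictated by the iterated limsup, and finish by letting $\eps\downarrow 0$ to pass from $\pi*\ioe$ to $\rho$. The paper organizes this as a sequence of explicit piecewise formulas for the partial limsup, whereas you split upfront into the cases $J_H^\theta(\pi)=+\infty$ versus $J_H^\theta(\pi)<+\infty$; the content is the same.

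One point deserves tightening. In the finite case you work with $\widetilde\rho=\rho*\iog*\ioe$ and propose to handle the boundary traces for $\theta\in(1,+\infty)$ by using the extra smoothness coming from $\iog$. But the order of limits forces $\tau\downarrow 0$ (so $\iog$ disappears) \emph{before} $\eps\downarrow 0$; after step $\tau\downarrow 0$ the value is already $\ell_H^\theta(\pi*\ioe)-\Phi_H(\pi*\ioe)$, with no $\iog$ left to exploit. The paper instead relies only on the fact that $\mc E(\pi)<\infty$ implies $\rho\in L^2(0,T;\mc H^1)$, so that $(\pi*\ioe)_s(j)=\tfrac{1}{\eps}\int \rho_s \to \rho_s(j)$ for $j\in\{0,1\}$ by the trace theorem; this is both simpler and compatible with the given order of limits. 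With that substitution your argument is complete and matches the paper's.
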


\begin{proof} The proof of this proposition is very similar to the proof of Proposition 5.12 of \cite{FN2017}, except by the presence of  an extra limsup as $\lambda\downarrow 0$.  For   $\pi\in \DM$, if $\pi\notin\DMO$ then there exist  $m$ and $j$ such that $\pi\notin E_m^j$. Therefore,
	\begin{equation*}
	\varlimsup_{j\to\infty}\varlimsup_{m\to\infty}
	J_{H,\zeta,\tau,\lambda,\eps}^{\theta,k,l,m,j}(\pi)\;=\;
	\begin{cases}
	\;\ell_H^\theta\big((\pi*\iog)*\ioe\big)-\Phi_H\big((\pi*\iog)*\ioe\big), &  
	\mbox{if}\,\,\,\,\pi\in A_{k,l}^{\zeta,\tau}\cap \DMO \cap \mc F_\lambda^\theta,\\ 
	\;+\infty, &\mbox{otherwise}.
	\end{cases}
	\end{equation*}
	Recall in \eqref{F_set}  the definition of $\mc F^\theta_\lambda$ and recall in \eqref{Ftheta} the definition of  $\mc F^\theta$.
Taking the limsup as $\lambda\downarrow 0$ we obtain that
	\begin{align*}
	& \varlimsup_{\lambda\downarrow 0}	\varlimsup_{j\to\infty}\varlimsup_{m\to\infty}
	J_{H,\zeta,\tau,\lambda,\eps}^{\theta,k,l,m,j}(\pi)\;\geq\;
	\begin{cases}
	\;\ell_H^\theta\big((\pi*\iog)*\ioe\big)-\Phi_H\big((\pi*\iog)*\ioe\big), &  
	\mbox{if}\,\,\,\,\pi\in A_{k,l}^{\zeta,\tau}\cap \DMO \cap \mc F^\theta,\\ 
	\;+\infty, &\mbox{otherwise}.
	\end{cases}
	\end{align*}
Recall \eqref{A_set}. Taking  the limsup as $\tau\downarrow 0$ and	then as $\zeta\downarrow 0$,
	\begin{equation*}\begin{split}&
	\varlimsup_{\zeta\downarrow 0} 	\varlimsup_{\tau\downarrow 0}
	\varlimsup_{\lambda\downarrow 0}	\varlimsup_{j\to\infty}\varlimsup_{m\to\infty}
	J_{H,\zeta,\tau,\lambda,\eps}^{\theta,k,l,m,j}(\pi)\;	\geq \;
	\begin{cases}
	\;\ell_H^\theta\big(\pi*\ioe\big)-\Phi_H\big(\pi*\ioe\big),  & \!\!\! 
	\mbox{if }\pi\in A_{k,l+2}\cap \DMO\cap \mc F^\theta\,,\\ 
	\;+\infty, &\mbox{otherwise}\,,
	\end{cases}
	\end{split}
	\end{equation*}
	see \cite{FN2017} for details on this step. Since $\{\pi:\mc E(\pi)\leq l+2\}\subset\DMO$, taking  now the limsup as $k\to\infty$ we  obtain that
	\begin{equation*}
	\begin{split}
	&\varlimsup_{k\to \infty}
	\varlimsup_{\zeta\downarrow 0} 	\varlimsup_{\tau\downarrow 0}
	\varlimsup_{\lambda\downarrow 0}	\varlimsup_{j\to\infty}\varlimsup_{m\to\infty}
	J_{H,\zeta,\tau,\lambda,\eps}^{\theta,k,l,m,j}(\pi)\;\geq\; 
	\begin{cases}
	\;\ell_H^\theta\big(\pi*\ioe\big)-\Phi_H\big(\pi*\ioe\big), &  
	\mbox{if}\,\, \pi\in \mc F^\theta \text{ and }\mc E(\pi)\leq l+2,\\ 
	\;+\infty\,, &\mbox{otherwise.}
	\end{cases}
	\end{split}
	\end{equation*}
	Taking now the limsup as $l\to\infty$, we get
	\begin{equation*}
	\begin{split}
	& \varlimsup_{l\to \infty}
	\varlimsup_{k\to \infty}
	\varlimsup_{\zeta\downarrow 0} 	\varlimsup_{\tau\downarrow 0}
	\varlimsup_{\lambda\downarrow 0}	\varlimsup_{j\to\infty}\varlimsup_{m\to\infty}
	J_{H,\zeta,\tau,\lambda,\eps}^{\theta,k,l,m,j}(\pi)\; \geq \;
	\begin{cases}
	\;\ell_H^\theta\big(\pi*\ioe\big)-\Phi_H\big(\pi*\ioe\big), &  
	\mbox{if}\,\, \pi\in \mc F^\theta \text{ and }\mc E(\pi)<\infty,\\ 
	\;+\infty\,, &\mbox{otherwise.}
	\end{cases}
	\end{split}
		\end{equation*}
	For $\pi$ such that $\mc E(\pi)<\infty$ it holds that  $\pi_t(du)=\rho_t(u)du$,
	where $\rho$ has 
	well-defined  limits at the boundary. Thus,  taking the limsup as $\eps\downarrow 0$, we obtain
	\begin{equation*}
	\varlimsup_{\eps\downarrow 0} 
	\varlimsup_{l\to \infty}
	\varlimsup_{k\to \infty}
	\varlimsup_{\zeta\downarrow 0} 	\varlimsup_{\tau\downarrow 0}
	\varlimsup_{\lambda\downarrow 0}	\varlimsup_{j\to\infty}\varlimsup_{m\to\infty}
	J_{H,\zeta,\tau,\lambda,\eps}^{\theta,k,l,m,j}(\pi)\; \geq\; J_H^\theta(\pi)\;,
	\end{equation*}
	concluding the proof.
\end{proof}

One ingredient in the proof of large deviations is to  restrict the Radon-Nikodym derivative given  in \eqref{correct_RN_1} to the set ${\mc{G}}_{H,\zeta,\tau,\lambda,\delta,\eps}^{\theta,n,k,l,m,j}$ defined in \eqref{G_set}, which encodes all the   sets introduced in  Subsection~\ref{small_sets} and then to show that this ``restricted Radon-Nikodym derivative'' is close to 
 an exponential of minus $n$ times a functional of the empirical measure, that is, we must assure that
\begin{equation}\label{radon J indices}
\begin{split}
&\frac{\dradon\bb P_{\subm}}{\dradon\bb P^{H}_{\subm}}\Bigg|_{\mc F_T}\,\cdot\,\;\textbf{1}_{{\mc{G}}_{H,\zeta,\tau,\lambda,\delta,\eps}^{\theta,n,k,l,m,j}}\;=\;\textbf{1}_{{\mc{G}}_{H,\zeta,\tau,\lambda,\delta,\eps}^{\theta,n,k,l,m,j}}\,\cdot\,\exp{\bigg\{ -n\Big[J_{H,\zeta,\tau,\lambda,\eps}^{\theta,k,l,m,j}(\pi^n)
	+ \textrm{err}_{H}^{\theta}(n,\tau,\eps,\delta)\Big]\bigg\}}\,,
\end{split}
\end{equation}
with
\begin{equation}\label{error}
\varlimsup_{\delta\downarrow 0}\varlimsup_{\eps\downarrow 0}\varlimsup_{\tau\downarrow 0}\varlimsup_{n\to\infty}\big|\textrm{err}_{H}^{\theta}(n,\tau,\eps,\delta)\,\big|\;=\;0
\end{equation}
for all $\theta>0$,  $H\in {\bf{C}}_\theta$, where the dependence on $T$ has been omitted.
Here we do not present the derivation of \eqref{radon J indices} because it is very similar to what is done in \cite[Subsection~5.1]{FN2017}. We only advertise that the order of the limits above
can not be changed. For example, one term of $\textrm{err}_{H}^{\theta}(n,\tau,\eps,\delta)$ is of order $\frac{\tau}{\eps}$.
The expression \eqref{radon J indices} is the appropriate form  for the Radon-Nikodym derivative to be used in the next subsection.
Although the relationship between $J_{H,\zeta,\tau,\lambda,\eps}^{\theta,k,l,m,j}(\pi)$ and $J_H^\theta(\pi)$ was presented in  Proposition \ref{labels}, we will use $J_{H,\zeta,\tau,\lambda,\eps}^{\theta,k,l,m,j}(\pi)$ instead of $J_H^\theta(\pi)$ to  allow  the application of Minimax Lemma.

\subsection{Upper bound for compact sets}\label{sub_5.3}
To reach the upper bound for compact sets we have to recall  the Minimax Lemma, see \cite[page 373, Lemma 3.3]{kl}. We start with the upper bound for open sets.
Let $\mc O\subseteq\DM$ be an open set and fix a function  $H\in {\bf{C}}_\theta$. By a similar computation  presented in the begin of Section \ref{s5}, we have, for all $\theta>0$,  $H\in {\bf{C}}_\theta$ , $\lambda>0$, $\delta>0$, $k,l,m,j\in \bb N$, $\zeta,\tau,\eps>0$,
\begin{align}
&\varlimsup_{n\to\infty}\tfrac{1}{n}\log \bb Q_{\subm}[\mc O] 
= \varlimsup_{n\to\infty}\tfrac{1}{n}\log \bb P_{\subm}[\pi^n\in\mc O]\notag\\
& \leq  \max\Bigg\{\varlimsup_{n\to\infty}\pfrac{1}{n}
\log \bb P_{\subm}\Big[\{\pi^n\in\mc O\}\cap {\mc{G}}_{H,\zeta,\tau,\lambda,\delta,\eps}^{\theta,n,k,l,m,j}\Big]\,,\;\, \varlimsup_{n\to\infty}\pfrac{1}{n}
\log \bb P_{\subm}\bigg[\Big({\mc{G}}_{H,\zeta,\tau,\lambda,\delta,\eps}^{\theta,n,k,l,m,j}\Big)^\complement\bigg]\Bigg\}\,,\label{prob1}
\end{align}
where 
\begin{equation}\label{R_lim}
\varlimsup_{l\to\infty}\varlimsup_{\eps\downarrow 0}\varlimsup_{\tau\downarrow 0}\varlimsup_{\zeta\downarrow 0}\varlimsup_{n\to\infty}\pfrac{1}{n}
\log \bb P_{\subm}\bigg[\Big({\mc{G}}_{H,\zeta,\tau,\lambda,\delta,\eps}^{\theta,n,k,l,m,j}\Big)^\complement\bigg]\;=\;-\infty\,,
\end{equation}
due to   \eqref{G_set_lim}.
Now, we use  the expression \eqref{radon J indices} of
 the Radon-Nikodym derivative to estimate the first probability on \eqref{prob1}, that is:
\begin{equation*}
\begin{split}
&\bb P_{\subm}\Big[\{\pi^n\in\mc O\}\cap {\mc{G}}_{H,\zeta,\tau,\lambda,\delta,\eps}^{\theta,n,k,l,m,j}\Big]\;=\;\bb E^{H}_{\subm}\Bigg[\,\frac{\dradon\bb P_{\subm}}{\dradon\bb P^{H}_{\subm}}\Bigg|_{\mc F_T}\,\cdot\,\;\textbf{1}_{{\mc{G}}_{H,\zeta,\tau,\lambda,\delta,\eps}^{\theta,n,k,l,m,j}}
\,\cdot\,\textbf 1_{\{\pi^n\in \mc O\}}\,\Bigg]\\
&=\;\bb E^{H}_{\subm}\Bigg[\,
 \textbf{1}_{{\mc{G}}_{H,\zeta,\tau,\lambda,\delta,\eps}^{\theta,n,k,l,m,j}}\,\cdot\,\exp\Big\{ -n\big[J_{H,\zeta,\tau,\lambda,\eps}^{\theta,k,l,m,j}(\pi^n)
+ \textrm{err}_{H}^{\theta}(n,\tau,\eps,\delta)\big]\Big\}
\,\cdot\,\textbf 1_{\{\pi^n\in \mc O\}}\,\Bigg]\,.\\
\end{split}
\end{equation*}
Therefore,
\begin{equation*}
\pfrac{1}{n}\log \bb P_{\subm}\Big[\{\pi^n\in\mc O\}\cap {\mc{G}}_{H,\zeta,\tau,\lambda,\delta,\eps}^{\theta,n,k,l,m,j}\Big]\;\leq\;  \sup_{\pi\in\mc O}\Big\{-J_{H,\zeta,\tau,\lambda,\eps}^{\theta,k,l,m,j}(\pi)
-\textrm{err}_{H}^{\theta}(n,\tau,\eps,\delta)\Big\}\,.
\end{equation*}
Optimizing over all the parameters  $\tau,\eps,\zeta,\delta,\lambda, k,l,m,j,H$, it yields
\begin{equation}\label{infsup}
\begin{split}
&\varlimsup_{n\to\infty}\pfrac{1}{n}\log \bb Q_{\subm}[\,\mc O\,]\leq\!\!\!\! \inf_{\at{\tau,\eps,\zeta,\delta,\lambda,}{k,l,m,j,H}}\sup_{\pi\in\mc O}\max\Big\{ 
-J_{H,\zeta,\tau,\lambda,\eps}^{\theta,k,l,m,j}(\pi)
-\textrm{err}_{H}^{\theta}(n,\tau,\eps,\delta)\,,\;\mc R_{H,\lambda,\delta}^{\theta,k,l,m,j}(\zeta,\tau,\eps)
\Big\}\,.
\end{split}
\end{equation}
To interchange the supremum and the infimum above, we start by observing that  for fixed parameters $\tau,\eps,\zeta,\delta,\lambda,k,l,m,j,H$, the functional 
	\begin{equation*}
	\pi\mapsto \max\Big\{ 
	-J_{H,\zeta,\tau,\lambda,\eps}^{\theta,k,l,m,j}(\pi)
	-\textrm{err}_{H}^{\theta}(n,\tau,\eps,\delta)\,,\;\mc R_{H,\lambda,\delta}^{\theta,k,l,m,j}(\zeta,\tau,\eps)
	\Big\}
	\end{equation*}
	is upper semi-continuous in $\DM$. The proof of this result is similar to the proof of Proposition~5.11 in  \cite{FN2017}.
Thus, we can apply the  Minimax Lemma, see \cite[page 373, Lemma 3.3]{kl}, hence interchanging the supremum with the infimum in \eqref{infsup}, and passing the bound to compacts sets.
Then, for all $\mc K\subset \DM$ compact, 
\begin{equation*}
\begin{split}
&\varlimsup_{n\to\infty}\pfrac{1}{n}\log \bb Q_{\subm}[\,\mc K\,]\leq\sup_{\pi\in\mc K}
 \inf_{\at{\tau,\eps,\zeta,\delta,\lambda,}{k,l,m,j,H}}\max\Big\{ 
\big[-J_{H,\zeta,\tau,\lambda,\eps}^{\theta,k,l,m,j}(\pi)
-\textrm{err}_{H}^{\theta}(n,\tau,\eps,\delta)\big]\,,\;\mc R_{H,\lambda,\delta}^{\theta,k,l,m,j}(\zeta,\tau,\eps)
\Big\}\,.
\end{split}
\end{equation*}
Putting together Proposition \ref{labels},  \eqref{R_lim} and  \eqref{error}, we  deduce:
\begin{proposition}[Upper bound for compact sets] 
	For every $\mc K$ compact subset of $\DM$,
	\begin{equation*}
	\varlimsup_{n\to\infty}\pfrac{1}{n}\log \bb Q_{\subm}[\mc K]
	\;\leq\; -\inf_{\pi\in\mc K} \I(\pi|\gamma)\,.
	\end{equation*}
\end{proposition}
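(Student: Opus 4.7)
The plan is to combine the three main ingredients already assembled in the paper: the super-exponentially small sets of Subsection~\ref{small_sets}, the representation \eqref{radon J indices} of the Radon-Nikodym derivative restricted to the good set $\mc G$, and the approximation Proposition~\ref{labels}. First, for an open set $\mc O\subset \DM$, I would split
\begin{equation*}
\bb Q_{\subm}[\mc O] \;=\; \bb P_{\subm}\bigl[\{\pi^n\in \mc O\}\cap \mc G_{H,\zeta,\tau,\lambda,\delta,\eps}^{\theta,n,k,l,m,j}\bigr] \;+\; \bb P_{\subm}\bigl[\{\pi^n\in\mc O\}\cap (\mc G_{H,\zeta,\tau,\lambda,\delta,\eps}^{\theta,n,k,l,m,j})^{\complement}\bigr]
\end{equation*}
and apply \eqref{sum_log_super} to turn the log of the sum into the max of the logs. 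The second term is controlled by \eqref{G_set_lim}, which gives a bound of $-l$ in the relevant iterated limit.

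For the first term I would change measure to $\bb P^H_{\subm}$, plug in \eqref{radon J indices}, and bound the indicator of the good set by one to obtain the pointwise bound
\begin{equation*}
\tfrac{1}{n}\log \bb P_{\subm}\bigl[\{\pi^n\in \mc O\}\cap \mc G_{H,\zeta,\tau,\lambda,\delta,\eps}^{\theta,n,k,l,m,j}\bigr]
\;\leq\; \sup_{\pi\in \mc O}\bigl\{-J_{H,\zeta,\tau,\lambda,\eps}^{\theta,k,l,m,j}(\pi)\bigr\}+\bigl|\mathrm{err}_H^\theta(n,\tau,\eps,\delta)\bigr|,
\end{equation*}
using that $\bb E^H_{\subm}[\mathbf 1]=1$. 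Taking $\limsup_{n\to\infty}$ and then infimizing over the parameters $H,\zeta,\tau,\lambda,\delta,\eps,k,l,m,j$ produces an inequality of the $\inf\sup$ type, precisely \eqref{infsup}.

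The central step is to swap the $\inf$ and $\sup$ via the Minimax Lemma (Lemma~3.3, p.~373 of \cite{kl}), which requires  passing to a compact subset $\mc K\subseteq \mc O$ and verifying that, for each choice of the parameters, the functional
\begin{equation*}
\pi\mapsto \max\Bigl\{-J_{H,\zeta,\tau,\lambda,\eps}^{\theta,k,l,m,j}(\pi)-\mathrm{err}_H^\theta(n,\tau,\eps,\delta),\; -l\Bigr\}
\end{equation*}
is upper semi-continuous on $\DM$. This is exactly why the two convolutions by $\iog$ and $\ioe$ were inserted in \eqref{Jmuito}: the maps $\pi\mapsto (\pi*\iog)*\ioe$ are continuous in the Skorohod topology, and the sets $A^{\zeta,\tau}_{k,l}$, $E^j_m$ and $\mc F^\theta_\lambda$ are all closed, so that $\pi\mapsto J^{\theta,k,l,m,j}_{H,\zeta,\tau,\lambda,\eps}(\pi)$ is lower semi-continuous; upper semi-continuity of its negative follows. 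I expect this semi-continuity check, together with the care needed in the order of the parameter limits, to be the main technical obstacle.

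After swapping, for each fixed $\pi\in \mc K$ I would take the iterated limits in the order prescribed by \eqref{error} and Proposition~\ref{labels}: first $n\to\infty$ to kill $\mathrm{err}_H^\theta(n,\tau,\eps,\delta)$, then $\delta\downarrow 0$, $\lambda\downarrow 0$, $j,m\to\infty$, $\tau\downarrow 0$, $\zeta\downarrow 0$, $k\to\infty$, $l\to\infty$, $\eps\downarrow 0$, which by Proposition~\ref{labels} upgrades $-J^{\theta,k,l,m,j}_{H,\zeta,\tau,\lambda,\eps}(\pi)$ to $-J^\theta_H(\pi)$ while driving the bound $-l$ coming from $\mc G^\complement$ to $-\infty$. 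Finally, taking infimum over $H\in \C$ yields $-\sup_{H\in \C} J^\theta_H(\pi)=-\I(\pi|\gamma)$, so that
\begin{equation*}
\varlimsup_{n\to\infty}\tfrac{1}{n}\log \bb Q_{\subm}[\mc K] \;\leq\; \sup_{\pi\in \mc K}\bigl\{-\I(\pi|\gamma)\bigr\} \;=\; -\inf_{\pi\in \mc K}\I(\pi|\gamma),
\end{equation*}
which is the desired upper bound on compact sets.
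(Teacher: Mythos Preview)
Your proposal is correct and follows essentially the same route as the paper: split off the good set $\mc G$, use the representation \eqref{radon J indices} to get an $\inf\sup$ bound over open sets, invoke the Minimax Lemma via upper semi-continuity of $\pi\mapsto -J^{\theta,k,l,m,j}_{H,\zeta,\tau,\lambda,\eps}(\pi)$, and then let the parameters go in the order dictated by \eqref{error} and Proposition~\ref{labels}. One small wording issue: the Minimax Lemma is not applied by ``passing to a compact subset $\mc K\subseteq\mc O$''; rather, the $\inf\sup$ bound is first established for \emph{every} open set, and the lemma then yields the $\sup\inf$ bound for an arbitrary compact $\mc K$ (via a finite open cover of $\mc K$).
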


\subsection{Upper bound for closed sets}\label{sub_5.4} 
In  Subsection~\ref{sub_5.3}, we already have the large deviations upper\break bound for closed sets. The  extension to closed sets is a standard routine based on  \textit{exponential tightness}. The exponential tightness is defined as the  existence of compact sets $K_\ell\subset \DM$ such that
\begin{equation}\label{exponential_tightness}
 \limsup_{n\to\infty}\pfrac{1}{n}\log  \bb Q_{\subm}\big[K_\ell^\complement\big]\;\leq\; -\ell\;,\quad\quad\forall\; \ell\in \bb N\;.
\end{equation}
Let $\mc C\subset \DM$ be a closed set. Assuming exponential tightness,  we have that
\begin{align*}
\limsup_{N\to\infty}\pfrac{1}{n}\log  \bb Q_{\subm}\big[\mc C\big]& \;\leq \; \limsup_{n\to\infty}\pfrac{1}{n}\log  \bb Q_{\subm}\big[\mc C\cap K_\ell\big]+
\limsup_{n\to\infty}\pfrac{1}{n}\log  \bb Q_{\subm}\big[ K_\ell^\complement\big]\\
& \;\leq \; \limsup_{n\to\infty}\pfrac{1}{n}\log  \bb Q_{\subm}\big[\mc C\cap K_\ell\big]-\ell\,.
\end{align*}
Hence, since the set $\mc C\cap K_\ell$ is compact and $\ell$ is arbitrary,  the upper bound for closed sets will follow from the upper bound for compact sets. 

The proof of the exponential tightness \eqref{exponential_tightness} is somewhat technical and  follows the same steps of \cite[Section 5.3]{FN2017}\footnote{In its hand, \cite[Section 5.3]{FN2017} is essentially a detailed version of \cite[pp. 271--273]{kl}.}. For this reason, we discuss  only what  considered needs to be checked for our model.
 With respect to \cite[Section 5.3]{FN2017}, the only and somewhat crucial point to be adapted is to find  a positive mean one martingale with respect to the natural filtration,
\begin{equation*}
\begin{split}
 & M^{a,H}_t\;:=\;\exp{\Big\{an\Big[\<\pi^n_t,H\>-\<\pi^n_0,H\>}
- \int_0^t U_n^a(H,s,\eta_s)\, ds\Big]\Big\}\,,
\end{split}
\end{equation*}
 where $|U_n^a(H,s,\eta_s)|$ is  uniformly bounded in $n\in \bb N$. 
  This claim is a consequence of the general fact that the Radon-Nikodym derivative between two Markov processes is a a positive mean one martingale with respect to the natural filtration together with formula \eqref{correct_RN_1} choosing $aH$ in lieu of $H$. 
In resume, we have therefore achieved:
\begin{proposition}[Upper bound for closed sets] 
For every $\mc C$ closed subset of $\DM$,
 \begin{equation*}
 \limsup_{n\to\infty}\tfrac{1}{n}\log \bb Q_{\subm}\big[\mc C\big]
\;\leq\; -\inf_{\pi\in\mc C}\I(\pi|\gamma)\,.
 \end{equation*}
\end{proposition}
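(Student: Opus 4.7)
The plan is to reduce the closed-set upper bound to the compact-set bound already established in Subsection~\ref{sub_5.3} via \emph{exponential tightness}, namely the existence of a sequence of compacts $K_\ell \subset \DM$ such that $\limsup_{n\to\infty} \tfrac{1}{n}\log \bb Q_{\subm}[K_\ell^\complement] \leq -\ell$. Once this is in place, for any closed $\mc C \subset \DM$ one writes $\bb Q_{\subm}[\mc C] \leq \bb Q_{\subm}[\mc C \cap K_\ell] + \bb Q_{\subm}[K_\ell^\complement]$, takes $\tfrac{1}{n}\log$, uses \eqref{sum_log_super} to turn the log of the sum into a maximum of two limsups, applies the compact-set upper bound to $\mc C \cap K_\ell$ (which is compact since $\mc C$ is closed) and the exponential-tightness bound to $K_\ell^\complement$, and finally sends $\ell \to \infty$ to conclude.

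For the construction of $K_\ell$, my approach is the classical one of \cite[Section~5.3]{FN2017}, ultimately based on \cite[pp.~271--273]{kl}: fix a countable family $\{H_i\}_{i\geq 1} \subset C^2$ dense in $C^0$, and control the modulus of continuity of $t \mapsto \<\pi^n_t,H_i\>$ at the super-exponential scale $e^{-\ell n}$. The analytic tool is the positive mean-one exponential martingale
\[
M^{a,H}_t \;=\; \exp\Big\{an\Big[\<\pi^n_t,H\> - \<\pi^n_0,H\> - \int_0^t U_n^a(H,s,\eta_s)\,ds\Big]\Big\}\,,
\]
which is obtained by specializing the Radon-Nikodym derivative \eqref{correct_RN_1} to the perturbation $aH$ (with $G = aH$ at the boundary). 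Combining this martingale with Doob's maximal inequality, a time discretization on scale $1/k$, optimization in $a$, and a uniform bound $|U_n^a(H,s,\cdot)| \leq C(a,H)$, one obtains super-exponential control of $\sup_{|t-s| \leq 1/k}|\<\pi^n_t,H\> - \<\pi^n_s,H\>|$. Intersecting such controls for finitely many $H_1,\dots,H_{i(\ell)}$ with the trivial bound $|\<\pi^n_t,H\>| \leq \|H\|_\infty$ produces, via the Skorohod analogue of the Arzelà-Ascoli theorem, the sought compacts $K_\ell$.

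The main obstacle is the uniform bound $|U_n^a(H,s,\cdot)| \leq C(a,H)$: inspecting \eqref{correct_RN_1}, $U_n^a$ is composed of a discrete Laplacian term $\<\pi^n_s,\Delta_n(aH_s)\>$, a quadratic drift of order $\chi^n \cdot (\nabla^+_n(aH_s))^2$, and boundary pieces proportional to $n^{1-\theta}(e^{\pm a H_s(x/n)}-1)$. For $\theta \in (1,+\infty)$ the boundary pieces are automatically $O(n^{1-\theta}) = o(1)$ and no adjustment is needed. For $\theta \in (0,1)$, however, the prefactor $n^{1-\theta}$ diverges, and one must exploit the restriction $H \in C^{1,2}_0$ baked into $\C$ via \eqref{C_theta_set}: since $H_s(0) = H_s(1) = 0$, one has $e^{\pm a H_s(x/n)} - 1 = O(1/n)$ at the boundary sites $x \in \{1,n-1\}$, which exactly compensates the $n^{1-\theta}$ factor and restores the uniform bound. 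Once this point is secured, the remainder of the exponential-tightness argument is identical to \cite[Section 5.3]{FN2017} and presents no additional conceptual difficulty.
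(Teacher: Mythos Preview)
Your proposal is correct and follows essentially the same approach as the paper: reduce to the compact case via exponential tightness, obtain the latter by the argument of \cite[Section~5.3]{FN2017}/\cite[pp.~271--273]{kl}, and identify the only model-specific ingredient as the uniform-in-$n$ boundedness of $U_n^a$ in the mean-one exponential martingale coming from \eqref{correct_RN_1} with perturbation $aH$. Your explicit discussion of the boundary piece $n^{1-\theta}(e^{\pm aH_s(x/n)}-1)$ in the two regimes, and in particular the use of $H\in C^{1,2}_0$ for $\theta\in(0,1)$ to kill the divergent prefactor, makes precise a point the paper leaves implicit.
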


\section{Large deviations lower bound}\label{sec6}

The proof of the lower bound  in the case $\theta\in (0,1)$ is quite similar to  \cite{blm} or \cite{flm}   (in dimension $d=1$), which  correspond to $\theta=0$ in our setting. 
We henceforth study in detail  the
 case $\theta\in(1,+\infty)$ following the more recent approach of \cite{LandimTsunoda}. 
  Due to the presence of large deviations from the initial measure we are not allowed to apply \cite[Theorem 2.4]{JLLV} and an $\I$-density argument is required here as in the framework of \cite{LandimTsunoda}.

\subsection{Lower bound for smooth profiles}
 The next two propositions are immediate consequences of the definition of  $J_G^\theta$ and show that solutions of the perturbed partial differential equations \eqref{eq:edpasy_menor_um} or \eqref{eq:edpasy=1_maior_um} depending on whether  $\theta\in(0,1)$ or $\theta\in(1,+\infty)$ lead to a simpler representation of the rate function.
\begin{proposition}\label{prop6364} Consider  $\theta\in(0,1)$ or $\theta\in(1,+\infty)$ and recall the definition of $\C$.   Given  $H\in\C$ , let  $\rho^H$ be the unique  weak solution of \eqref{eq:edpasy_menor_um} if $\theta\in(0,1)$ or the unique  weak solution of  \eqref{eq:edpasy=1_maior_um} if $\theta\in(1,+\infty)$. Then
 \begin{align*}
 \sup_{G\in \C}  J_G^\theta(\rho^H) & \;=\;\sup_{G\in \C} \Big\{\ell_G^\theta(\rho^H)-\Phi_G(\rho^H)\Big\} \\
 & \;=\; \sup_{G\in \C} \Bigg\{2\int_0^t\<\chi(\rho^H_s) \,\p_uH_s, \p_uG_s\>\,ds - \int_0^T \< \chi(\rho_s^H), (\p_u H_s)^2 \>\,ds \Bigg\}\\
 &  \;=\;  \int_0^T \< \chi(\rho_s^H), (\p_u H_s)^2 \>\,ds\,.
\end{align*}
\end{proposition}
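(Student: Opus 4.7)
The plan is to verify the three equalities in sequence; each follows from unwinding definitions and from the fact that $\rho^H$ solves the perturbed weak equation.

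For the first equality, I would begin by checking that $\pi_t(du) = \rho_t^H(u)\,du$ sits in the non-infinite branch of the definition \eqref{functional:J} of $J_G^\theta$. Since Theorem \ref{thm:hid_lim_weak} gives $\rho^H \in L^2(0,T;\mc H^1)$, the energy $\mc E(\pi)$ is automatically finite by the Riesz representation discussion right after Definition \ref{energy}. In the Dirichlet regime $\theta \in (0,1)$ the containment $\pi \in \mc F_\theta = \DM$ is trivial. In the Neumann regime $\theta \in (1,+\infty)$, mass conservation $\langle \rho_t^H, 1\rangle = \langle \gamma, 1\rangle$ (so that $\pi \in \mc F_\theta$) follows by inserting the admissible test function $f \equiv 1 \in \C = C^{1,2}$ into the weak formulation \eqref{eq:weak_eq_theta_maior_1}: every term except the first two vanishes. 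Hence $J_G^\theta(\rho^H) = \ell_G^\theta(\rho^H) - \Phi_G(\rho^H)$.

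For the second equality, I would apply the weak formulation $\FDir(T,G,\rho^H) = 0$ (respectively $\FNeu(T,G,\rho^H) = 0$) to an arbitrary $G \in \C$, using that $\rho^H_0 = \gamma$ so that $\langle \rho_0^H, G_0\rangle$ matches $\langle \gamma, G_0\rangle$. A direct comparison with the definition of $\ell_G^\theta$ then yields
\begin{equation*}
\ell_G^\theta(\rho^H) \;=\; 2\int_0^T \langle \chi(\rho^H_s)\,\partial_u H_s, \partial_u G_s\rangle\,ds\,,
\end{equation*}
and subtracting $\Phi_G(\rho^H) = \int_0^T \langle \chi(\rho^H_s),(\partial_u G_s)^2\rangle\,ds$ produces the displayed second line.

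The third equality is a completion of the square. Since $\rho_s^H$ takes values in $[0,1]$ (this is inherited from the fact that $\pi^n \in [0,1]$-valued densities together with the characterization in Proposition \ref{prop:weak_sol}), one has $\chi(\rho_s^H) \geq 0$, so pointwise in $s$
\begin{equation*}
2\langle \chi(\rho_s^H)\partial_u H_s, \partial_u G_s\rangle - \langle \chi(\rho_s^H),(\partial_u G_s)^2\rangle \;=\; \langle \chi(\rho_s^H),(\partial_u H_s)^2\rangle - \langle \chi(\rho_s^H),(\partial_u(H_s - G_s))^2\rangle\,.
\end{equation*}
Integrating in $s$, the subtracted term is nonnegative, so the supremum over $G \in \C$ is bounded above by $\int_0^T \langle \chi(\rho_s^H),(\partial_u H_s)^2\rangle\,ds$, and this bound is attained by choosing $G = H$, which is admissible since $H \in \C$ by hypothesis. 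The mildest technical point is the admissibility check in step one for $\theta \in (1,+\infty)$; this is precisely why the definition of $\C$ includes constants in the Neumann regime.
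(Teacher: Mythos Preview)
Your proposal is correct and is precisely the argument the paper has in mind: the authors do not give a proof, stating only that the proposition is an ``immediate consequence of the definition of $J_G^\theta$''. You have carefully unpacked that immediacy, including the one nontrivial verification (mass conservation for $\theta\in(1,+\infty)$ via the test function $f\equiv 1$), and your completion-of-the-square step is the natural way to evaluate the supremum; note also that you correctly take $\Phi_G(\rho^H)=\int_0^T\langle\chi(\rho_s^H),(\p_uG_s)^2\rangle\,ds$, whereas the displayed middle line in the statement contains a harmless typo ($(\p_uH_s)^2$ in place of $(\p_uG_s)^2$, and $\int_0^t$ in place of $\int_0^T$).
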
 
Proposition \ref{prop6364}  motivates the next definition.
\begin{definition}\label{Pi}
Denote by $\Pi$ the subspace of $\DMO$ consisting of all paths $\pi_t(du)=\rho_t(u)\,du$ for which there exists some 
$H\in \C$ such that  $\rho=\rho^H$ is the unique weak solution of \eqref{eq:edpasy_menor_um} if $\theta\in(0,1)$ or the unique weak solution of \eqref{eq:edpasy=1_maior_um} if $\theta\in(1,+\infty)$. 
\end{definition}
The next two propositions provide conditions to assure that a  profile $\rho$ is solution of the  corresponding hydrodynamic equation (according to each regime of $\theta$) for some $H$. That is, conditions to assure that $\rho\in \Pi$. Proposition \ref{elliptic_<1} is well known in the literature and it is included here for  sake of completeness.
\begin{proposition}\label{elliptic_<1}
Let $\theta\in(0,1)$.
Let  $\rho\in C^{1,2}$  such that
$0<\eps\leq \rho \leq 1-\eps$
for some $\eps>0$.  Then, there exists an unique  (strong) solution $H$ 
of the elliptic equation 
\begin{numcases}{}
 \p_u^2 H_t(u)\,+ \,\frac{\p_u\big(\chi(\rho_t(u))\big)}{\chi(\rho_t(u))}\,\p_u H_t(u)\, = \,\frac{\Delta \rho_t(u)\,-\, \partial_t \rho_t(u) }{2\,\chi(\rho_t(u))} \,,\, \forall u\in(0,1) &   \label{eq61}\\
  H_t (0)=0 & \label{eq62}\\
 H_t (1)=0 & \label{eq63}
\end{numcases}	
\end{proposition}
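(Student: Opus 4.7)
The plan is to rewrite the equation in divergence form and then solve it explicitly by two successive integrations, using the boundary data to determine the constants of integration. Since $\rho_t(u)\in[\eps,1-\eps]$ uniformly in $(t,u)$, the static compressibility satisfies $\chi(\rho_t(u))\ge c_\eps:=\min\{\chi(\eps),\chi(1-\eps)\}>0$, and $\chi(\rho)\in C^{1,2}$ by composition. Observe now that, using the product rule,
\begin{equation*}
\p_u\bigl(\chi(\rho_t(u))\,\p_u H_t(u)\bigr)\;=\;\chi(\rho_t(u))\,\p_u^2 H_t(u)+\p_u\bigl(\chi(\rho_t(u))\bigr)\,\p_u H_t(u)\,,
\end{equation*}
so multiplying \eqref{eq61} by $\chi(\rho_t(u))$ turns it into the equivalent divergence-form equation
\begin{equation*}
\p_u\bigl(\chi(\rho_t(u))\,\p_u H_t(u)\bigr)\;=\;\tfrac{1}{2}\bigl(\Delta \rho_t(u)-\p_t\rho_t(u)\bigr)\,,\quad H_t(0)=H_t(1)=0.
\end{equation*}

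For each fixed $t\in[0,T]$, I would integrate once from $0$ to $u$ to obtain
\begin{equation*}
\p_u H_t(u)\;=\;\frac{1}{\chi(\rho_t(u))}\Bigl[C_t+\tfrac{1}{2}\int_0^u\bigl(\Delta\rho_t(v)-\p_t\rho_t(v)\bigr)\,dv\Bigr],
\end{equation*}
where $C_t:=\chi(\rho_t(0))\,\p_u H_t(0)$ is a constant to be determined. A second integration from $0$ to $u$ together with $H_t(0)=0$ yields an explicit formula for $H_t(u)$; imposing the remaining boundary condition $H_t(1)=0$ gives the scalar linear equation
\begin{equation*}
C_t\int_0^1\frac{dw}{\chi(\rho_t(w))}\;=\;-\,\tfrac{1}{2}\int_0^1\frac{1}{\chi(\rho_t(w))}\int_0^w\bigl(\Delta\rho_t(v)-\p_t\rho_t(v)\bigr)\,dv\,dw\,,
\end{equation*}
which uniquely determines $C_t$ since the integral on the left is bounded below by $1/c_\eps^{-1}>0$. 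Plugging back produces the unique candidate $H_t$.

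Uniqueness follows from the same computation applied to the difference $\bar H$ of two solutions: $\bar H$ satisfies the homogeneous divergence-form equation $\p_u(\chi(\rho_t)\p_u\bar H_t)=0$ with $\bar H_t(0)=\bar H_t(1)=0$, so $\chi(\rho_t)\p_u\bar H_t\equiv C_t$, and the two boundary conditions force $C_t=0$, hence $\bar H_t\equiv 0$. Finally, the required regularity of $H$ is inherited directly from the explicit formula: the integrand involves only $\rho$, $\p_t\rho$, $\Delta\rho$ and $1/\chi(\rho)$, all of which are continuous in $(t,u)$ since $\rho\in C^{1,2}$ and $\chi(\rho)$ is bounded away from zero, so $H\in C^{1,2}$ as well. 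There is no real obstacle in this argument; the only point to check carefully is that the Dirichlet boundary conditions \eqref{eq62}--\eqref{eq63} interact correctly with the non-divergence form, but the algebraic identity above makes the reduction to divergence form immediate and the rest is a one-dimensional two-point boundary value problem.
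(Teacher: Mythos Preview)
Your proof is correct and follows essentially the same approach as the paper: both fix $t$, treat the equation as a second-order linear ODE in $u$, integrate twice, and use the two Dirichlet conditions to pin down the free constants. The paper simply writes down the general solution of the ODE directly, whereas you first recast it in divergence form $\p_u(\chi(\rho_t)\p_u H_t)=\tfrac12(\Delta\rho_t-\p_t\rho_t)$, which makes the two integrations more transparent; after using $\int_0^u\Delta\rho_t(v)\,dv=\p_u\rho_t(u)-\p_u\rho_t(0)$ the formulas coincide. Your explicit uniqueness argument via the homogeneous equation and your remark on inherited $C^{1,2}$ regularity are small additions the paper leaves implicit.
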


\begin{proof}  
Fix  $t\in[0,T]$.
Since \eqref{eq61} is a linear  ODE of second order on $H$, we  solve it, getting
\begin{equation}\label{eq_geral}
\begin{split}
H_t(u)\;=\;& H_t(0)+
\big(2\chi(\rho_t(0))\p_uH_t(0)-\p_u\rho_t(0)\big)\int_0^u\frac{1}{2\chi(\rho_t(v))}\,dv\\
&+
\int_0^u\frac{\p_u\rho_t(v)-\p_t\int_0^v\rho_t(w)\,dw}{2\chi(\rho_t(v))}\,dv\,.
\end{split}
\end{equation}
Taking $u=1$ and then applying the boundary conditions \eqref{eq62} and \eqref{eq63} in the equality \eqref{eq_geral} above, we get
\begin{equation}\label{IIIa}
\p_u H_t (0)\;=\;\frac{1}{2\chi(\rho_t(0))}\Big\{\p_u\rho_t(0)-\frac{\bb I_t}{I_t}\Big\}\,,
\end{equation}
where \begin{equation}\label{III}
I_t:=	\int_0^1\frac{1}{2\chi(\rho_t(v))}\,dv\quad\mbox{ and }\quad\bb I_t:=	\int_0^1\frac{\p_u\rho_t(v)-\p_t\int_0^v\rho_t(w)\,dw}{2\chi(\rho_t(v))}\,dv\,.
\end{equation}
In other words, \eqref{IIIa} is the right guess for $\p_u H_t(0)$ in order to achieve the solution of the elliptic PDE in the statement of the proposition. Coming back to \eqref{eq_geral}, we then apply \eqref{eq62} and  \eqref{IIIa}, which leads us to 
\begin{align*}
H_t(u) & \;=\;
\int_0^u\frac{\frac{-\bb I_t}{I_t}+\p_u\rho_t(v)-\p_t\int_0^v\rho_t(w)\,dw}{2\chi(\rho_t(v))}\,dv\\
& \;=\;-\frac{\bb I_t}{I_t}\int_0^u\frac{1}{2\chi(\rho_t(v))}\,dv+
\int_0^u\frac{\p_u\rho_t(v)-\p_t\int_0^v\rho_t(w)\,dw}{2\chi(\rho_t(v))}\,dv\,,
\end{align*}
and it is straightforward  to check that this is the required solution of the elliptic PDE. 
\end{proof}

\begin{proposition}\label{elliptic_>1}
Let $\theta\in(1,+\infty)$.
Consider  $\rho\in C^{1,2}$  such that
$0<\eps\leq \rho \leq 1-\eps$
for some $\eps>0$ and $\partial_t\int_0^1\rho_t(z)\,dz=0$.  Then, up to an additive constant, there exists an unique  (strong) solution $H$ 
of the elliptic equation 
\begin{numcases}{}
 \p_u^2 H_t(u)\,+ \,\pfrac{\p_u\big(\chi(\rho_t(u))\big)}{\chi(\rho_t(u))}\,\p_u H_t(u)\, = \,\pfrac{\Delta \rho_t(u)\,-\, \partial_t \rho_t(u) }{2\,\chi(\rho_t(u))} \,,\,  \forall u\in(0,1) & \label{eq6.15}\\
 \p_u H_t (0)=\pfrac{1 }{2\,\chi(\rho_t(0))}\p_u\rho_t(0)& \label{eq6.16}\\
 \p_u H_t (1)= \pfrac{1 }{2\,\chi(\rho_t(1))}\p_u\rho_t(1)& \label{eq6.17}
\end{numcases}
\end{proposition}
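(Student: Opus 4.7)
The plan is to recognize the left-hand side of \eqref{eq6.15} as a perfect divergence. A direct computation shows that
\[
\partial_u^2 H_t(u) + \frac{\partial_u\chi(\rho_t(u))}{\chi(\rho_t(u))}\,\partial_u H_t(u) \;=\; \frac{1}{\chi(\rho_t(u))}\,\partial_u\!\bigl(\chi(\rho_t(u))\,\partial_u H_t(u)\bigr),
\]
so that \eqref{eq6.15}, after multiplying both sides by $\chi(\rho_t(u))$, is equivalent to
\[
\partial_u\!\bigl(\chi(\rho_t(u))\,\partial_u H_t(u)\bigr) \;=\; \tfrac{1}{2}\bigl(\Delta \rho_t(u) - \partial_t\rho_t(u)\bigr).
\]
This reduces the problem to a one-dimensional integration.

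Next, I would integrate the last identity from $0$ to $u$, obtaining
\[
\chi(\rho_t(u))\,\partial_u H_t(u) - \chi(\rho_t(0))\,\partial_u H_t(0) \;=\; \tfrac{1}{2}\bigl(\partial_u \rho_t(u) - \partial_u \rho_t(0)\bigr) \,-\, \tfrac{1}{2}\,\partial_t\!\int_0^u \!\rho_t(v)\,dv.
\]
Plugging in the left boundary condition \eqref{eq6.16}, the terms $\chi(\rho_t(0))\,\partial_u H_t(0)$ and $\tfrac{1}{2}\partial_u\rho_t(0)$ cancel, yielding the explicit formula
\[
\partial_u H_t(u) \;=\; \frac{\partial_u \rho_t(u) \,-\, \partial_t\!\int_0^u \rho_t(v)\,dv}{2\,\chi(\rho_t(u))}\,.
\]
Since by hypothesis $0<\eps\leq\rho_t\leq 1-\eps$, the denominator is uniformly bounded away from zero and the expression above is well defined and smooth in $u$; a final integration in $u$ recovers $H_t$, with precisely one free additive constant.

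The verification step, which is where the mass-conservation hypothesis enters, consists in evaluating the displayed formula for $\partial_u H_t$ at $u=1$:
\[
\partial_u H_t(1) \;=\; \frac{\partial_u \rho_t(1) \,-\, \partial_t\!\int_0^1 \rho_t(v)\,dv}{2\,\chi(\rho_t(1))}\,.
\]
The right boundary condition \eqref{eq6.17} holds if and only if $\partial_t\!\int_0^1\rho_t(v)\,dv=0$, which is exactly the assumption $\partial_t\!\int_0^1\rho_t(z)\,dz=0$. For the uniqueness up to an additive constant, the difference $K_t:=H_t^{(1)}-H_t^{(2)}$ of two solutions satisfies $\partial_u(\chi(\rho_t)\,\partial_u K_t)=0$ with $\partial_u K_t(0)=0$, hence $\chi(\rho_t)\,\partial_u K_t\equiv 0$, and the bound $\chi(\rho_t)\geq\eps(1-\eps)$ forces $\partial_u K_t\equiv 0$, so $K_t$ is constant in $u$. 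The argument is essentially a one-dimensional ODE computation and I expect no serious obstacle; the only noteworthy point is that the compatibility condition at $u=1$ becomes precisely the conservation of total mass, in stark contrast with the Dirichlet case of Proposition \ref{elliptic_<1} where this freedom is absorbed by choosing $\partial_u H_t(0)$ through \eqref{IIIa}.
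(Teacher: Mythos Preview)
Your proof is correct and follows essentially the same approach as the paper: both recognize \eqref{eq6.15} as a first-order linear ODE in $\p_uH_t$ (you make the divergence form $\p_u(\chi\,\p_uH)$ explicit, the paper simply says ``solving the linear ODE''), integrate once using \eqref{eq6.16} to obtain the same formula $\p_uH_t(u)=\frac{\p_u\rho_t(u)-\p_t\int_0^u\rho_t(v)\,dv}{2\chi(\rho_t(u))}$, and then verify \eqref{eq6.17} via the mass-conservation hypothesis. Your version is slightly more complete in that you spell out the uniqueness-up-to-constant argument, which the paper's proof omits.
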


\begin{proof}  
Fix  $t\in[0,T]$. Solving the linear ODE of second order \eqref{eq6.15}, we get
\begin{equation*}
H_t(u):=H_t(0)+
\int_0^u\frac{2\chi(\rho_t(0))\p_uH_t(0)-\p_u\rho_t(0)+\p_u\rho_t(v)-\p_t\int_0^v\rho_t(w)\,dw}{2\chi(\rho_t(v))}\,dv\,.
\end{equation*}
The boundary condition \eqref{eq6.16} then leads us to
\begin{equation*}
H_t(u)\;=\;H_t(0)+
\int_0^u\frac{\p_u\rho_t(v)-\p_t\int_0^v\rho_t(w)\,dw}{2\chi(\rho_t(v))}\,dv\,.
\end{equation*}
Keeping in mind that $\partial_t\int_0^1\rho_t(z)\,dz=0$ it is straightforward to check that the expression on the right hand of the  above expression satisfies \eqref{eq6.17} regardless the chosen value of $H_t(0)$. 	
\end{proof}

\begin{proposition}\label{lower subset}
 Let $\mc O$ be an open set of $\DM$. Then
\begin{equation*}
 \varliminf_{N\to\infty}
\frac{1}{N}\log\bb Q_{\subm}\big[\,\mc O\,\big]\;\geq\; -\inf_{\pi\in \mc O\cap \Pi}\I(\pi|\gamma)\;.
\end{equation*}
\end{proposition}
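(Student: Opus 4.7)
The plan is to exhibit, for each smooth $\pi\in\mathcal O\cap\Pi$, a perturbation $H\in\C$ under which $\pi$ becomes the typical trajectory, and then perform the standard change of measure. By Definition~\ref{Pi}, fixing such $\pi$ we may pick $H\in\C$ so that $\pi_t(du)=\rho^H_t(u)\,du$ where $\rho^H$ solves \eqref{eq:edpasy_menor_um} (if $\theta\in(0,1)$) or \eqref{eq:edpasy=1_maior_um} (if $\theta\in(1,+\infty)$). It suffices to prove
\begin{equation*}
\varliminf_{n\to\infty}\tfrac 1n\log\bb Q_{\subm}[\mc O]\;\geq\; -\I(\pi|\gamma)
\end{equation*}
for this individual $\pi$, and then take the infimum over $\pi\in\mc O\cap\Pi$.

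First I would change measure to the weakly asymmetric process of Section~\ref{s4} driven by $H$:
\begin{equation*}
\bb Q_{\subm}[\mc O]\;=\;\bb E^{H}_{\subm}\!\left[\,\tfrac{\dradon\bb P_{\subm}}{\dradon\bb P^H_{\subm}}\bigg|_{\mc F_T}\,\one_{\{\pi^n\in\mc O\}}\right].
\end{equation*}
Writing the Radon--Nikodym derivative as $\exp\{-n\,Y_n(H)\}$ according to \eqref{correct_RN_1}, and using the elementary inequality
$\log \bb E^H_{\subm}[e^{-n Y_n}\one_A]\geq \log\bb P^H_{\subm}[A]-n\,\bb E^H_{\subm}[Y_n\mid A]$ (Jensen applied to the conditional expectation, cf.\ \cite{kl}), I obtain
\begin{equation*}
\tfrac 1n\log\bb Q_{\subm}[\mc O]\;\geq\;\tfrac 1n\log\bb P^H_{\subm}[\pi^n\in\mc O]\;-\;\bb E^H_{\subm}\big[\,Y_n(H)\,\big|\,\pi^n\in\mc O\big].
\end{equation*}
By Theorem~\ref{thm:hid_lim_weak} applied to the perturbed process, $\pi^n\to\pi$ in probability under $\bb P^H_{\subm}$; since $\mc O$ is open and contains $\pi$, the first term tends to $0$.

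The core of the argument is therefore to show
\begin{equation*}
\lim_{n\to\infty}\bb E^H_{\subm}\big[\,Y_n(H)\,\big|\,\pi^n\in\mc O\big]\;=\;J_H^\theta(\pi)\;=\;\int_0^T\<\chi(\rho^H_s),(\p_uH_s)^2\>\,ds\;=\;\I(\pi|\gamma),
\end{equation*}
where the last two equalities come from Proposition~\ref{prop6364} together with the fact that $\pi=\rho^H$ solves the chosen perturbed PDE, which makes $\ell_H^\theta(\rho^H)=2\Phi_H(\rho^H)$, so $J_H^\theta(\rho^H)=\Phi_H(\rho^H)$. To pass to the limit inside $Y_n$, I would treat each piece of \eqref{correct_RN_1} separately: the discrete Laplacian and discrete gradient of $H$ converge uniformly to $\Delta H$ and $\p_u H$ since $H\in C^{1,2}$; the factor $\<\chi_s^n,(\nabla_n^+H_s)^2\>$ is replaced by $\<\chi(\rho^H_s),(\p_uH_s)^2\>$ via the replacement lemma Lemma~\ref{lem:rep_lem_general} applied to $V_{\eps,0}^{\theta,\varphi^n}$ with $\varphi^n_s=(\p_uH_s)^2$, combined with the hydrodynamic limit for $\pi^n*\ioe$; the boundary occupations $\eta_s(1),\eta_s(n-1)$ are replaced by $\alpha,\beta$ (if $\theta\in(0,1)$) using Lemma~\ref{lem:rep_lem_general} with $x\in\{1,n-1\}$, or by $\rho^H_s(0),\rho^H_s(1)$ (if $\theta\in(1,+\infty)$) via the smoothed version; the current boundary terms vanish because we chose $G=H$ at the boundary. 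For $\theta\in(1,+\infty)$ the last sum in \eqref{correct_RN_1} carries a prefactor $n^{1-\theta}\to 0$ and, together with the stochastic bound on arrivals at the boundary Poisson clocks (same type of estimate as Lemma~\ref{lem:current_exp}), contributes $0$ in the limit. Conditioning on $\{\pi^n\in\mc O\}$ is harmless because $\bb P^H_{\subm}[\pi^n\in\mc O]\to 1$ and $|Y_n(H)|$ is uniformly bounded since $H\in\C$ and there is at most one particle per site.

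The main obstacle I anticipate is the uniform integrability/boundedness needed to convert the replacement lemmas (which give convergence in probability, or in $L^1$ at best) into convergence of the conditional expectation $\bb E^H_{\subm}[Y_n\mid\pi^n\in\mc O]$. The cleanest way to handle this is to observe that every summand in $Y_n(H)$ is deterministically bounded in $L^\infty$ by a constant depending only on $\|H\|_{C^{1,2}}$ and $T$ (using $\eta(x)\in\{0,1\}$, $H\in C^{1,2}$, and Taylor expansion on the exponential, as already done in the tightness argument of Section~\ref{sec:tightness}), so that bounded convergence, combined with $\bb P^H_{\subm}[\mc O]\to 1$, reduces the problem to the $L^1$ replacement statement of Lemma~\ref{lem:rep_lem_general}. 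In the regime $\theta\in(1,+\infty)$, an additional subtlety is that the open set $\mc O$ may include trajectories violating mass conservation; restricting to $\mc O\cap\Pi\subset\mc F_\theta$ and invoking Lemma~\ref{lemma_5.1} (mass conservation is super-exponentially typical under $\bb P^H$ as well, by the same Poisson bound) ensures this is not an issue. Putting the three limits together yields the desired bound $\varliminf_n n^{-1}\log\bb Q_{\subm}[\mc O]\geq -\I(\pi|\gamma)$ for every $\pi\in\mc O\cap\Pi$, and hence the proposition.
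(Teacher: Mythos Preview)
Your proof is correct and follows essentially the same route as the paper: change of measure to $\bb P^H$, Jensen on the conditional expectation, the hydrodynamic limit Theorem~\ref{thm:hid_lim_weak} to handle $\bb P^H_{\subm}[\pi^n\in\mc O]\to 1$, and the replacement lemmas plus Proposition~\ref{prop6364} to identify the limit of $Y_n(H)$ with $\I(\pi|\gamma)$. The only cosmetic difference is that the paper packages the limit of $\bb E^H_{\subm}[Y_n]$ as the relative-entropy computation of Lemma~\ref{entropy}, whereas you compute the conditional expectation directly; since $|Y_n(H)|$ is uniformly bounded and $\bb P^H_{\subm}[\pi^n\in\mc O]\to1$, the two are equivalent.
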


The proof of the inequality above relies  on the hydrodynamic limit for the perturbed process and  Proposition~\ref{prop6364}. It follows  the same lines of  \cite[Chapter 10]{kl} or \cite{FN2017}. Let
\begin{equation}\label{ent}
\bs{H} \big(\bb P_{\subm}^{H}\vert\bb P_{\subm}\big)
\;:=\;\bb E_{\subm}^{H}\Big[\log\radonN\,\Big]
\;=\;-\bb E_{\subm}^H\Big[\log\radonNinv\,\Big]
\end{equation}
 be the so-called \textit{relative entropy} of $\bb P_{\subm}^{H}$ with respect to $\bb P_{\subm}$.

\begin{lemma}\label{entropy}
Let  $H\in \C$.
Then
\begin{equation*}
 \lim_{n\to\infty}\frac{1}{n}\bs{H} \big(\bb P_{\subm}^{H}\vert\bb P_{\subm}\big)\;=\; \I(\rho^H|\gamma)\,,
\end{equation*}
where  $\rho^H$ is the unique weak solution of \eqref{eq:edpasy_menor_um} if $\theta\in(0,1)$, or the unique weak solution of \eqref{eq:edpasy=1_maior_um} if $\theta\in(1,+\infty)$. 
\end{lemma}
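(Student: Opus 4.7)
My starting point is the explicit formula \eqref{correct_RN_1} for the Radon-Nikodym derivative. By \eqref{ent}, $\frac{1}{n}\bs{H}(\bb P_{\subm}^{H}\vert\bb P_{\subm})=-\frac{1}{n}\bb E^H_{\subm}[\log\frac{\dradon\bb P_{\subm}}{\dradon\bb P^H_{\subm}}|_{\mc F_T}]=\bb E^H_{\subm}[X_n]$, where $X_n$ is precisely the quantity in square brackets in \eqref{correct_RN_1}. The first observation is that $X_n$ is uniformly bounded, since $\eta_s(x)\in\{0,1\}$, $H\in C^{1,2}$ (so all discrete derivatives are uniformly controlled), and the boundary exchange term carries the prefactor $n^{1-\theta}$ which is of order $1$ only when $\theta=1$. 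In particular, dominated convergence will apply once I establish convergence in $\bb P^H_{\subm}$-probability.

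My goal is therefore to show that $X_n\to J_H^\theta(\rho^H)$ in $\bb P^H_{\subm}$-probability, and then to recognize this limit as $\I(\rho^H|\gamma)$. Term by term:
\begin{enumerate}[(i)]
\item The non-boundary bulk terms $\<\pi^n_T,H_T\>-\<\pi^n_0,H_0\>-\int_0^T\<\pi^n_s,(\p_s+\Delta)H_s\>\,ds$ are continuous functionals of the empirical measure (after replacing $\Delta_n$ by $\Delta$ at cost $O_H(1/n)$), hence converge in probability to the corresponding expression evaluated at $\rho^H$ by the hydrodynamic limit, Theorem~\ref{thm:hid_lim_weak}. The initial value converges to $\<\gamma,H_0\>$ because $\{\eta^n\}$ is associated to $\gamma$.
\item The carré-du-champ term $\int_0^T\<\chi^n_s,(\p_uH_s)^2\>\,ds$ is handled by the replacement Lemma~\ref{lem:rep_lem_general} applied with $\varphi_s^n=(\p_uH_s)^2$ and $x=0$: after replacing $(\eta_s(x)-\eta_s(x+1))^2/2$ by $\chi(\eta_s^{\eps n}(x))$, the hydrodynamic limit identifies the limit as $\int_0^T\<\chi(\rho^H_s),(\p_uH_s)^2\>\,ds=\Phi_H(\rho^H)$.
\item The boundary bulk terms $\int_0^T[\eta_s(n-1)\p_uH_s(1)-\eta_s(1)\p_uH_s(0)]\,ds$ are replaced using Lemma~\ref{lem:rep_lem_general} at $x=1,n-1$: for $\theta\in(1,+\infty)$ one replaces $\eta_s(1),\eta_s(n-1)$ by the averages $\eta_s^{\eps n}(1),\eta_s^{\eps n}(n-1)$, then invokes the hydrodynamic limit to reach the traces $\rho_s^H(0),\rho_s^H(1)$; for $\theta\in(0,1)$ one replaces directly by $\alpha,\beta$, consistent with the Dirichlet boundary conditions of $\rho^H$.
\item The exchange term $n^{1-\theta}\sum_{x\in\{1,n-1\}}\int_0^T[\cdots]\,ds$ vanishes in the limit: for $\theta>1$ the prefactor $n^{1-\theta}\to 0$ and the integrand is bounded; for $\theta\in(0,1)$ the convention $H_s(0)=H_s(1)=0$ imposed in Section~\ref{s4} makes each factor $e^{\pm H_s(u_x)}-1$ vanish identically.
\end{enumerate}
Putting (i)--(iv) together, the remaining expression is exactly $\ell^\theta_H(\rho^H)-\Phi_H(\rho^H)=J^\theta_H(\rho^H)$, in each of the two cases matching the appropriate $\ell^\theta_H$ (Dirichlet boundary term $\beta\p_uH_s(1)-\alpha\p_uH_s(0)$ or Robin trace term $\rho^H_s(1)\p_uH_s(1)-\rho^H_s(0)\p_uH_s(0)$).

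The main technical point to be careful about is that the replacement lemmas in Section~\ref{s3} were established under $\bb P_{\subm}$, not under the perturbed law $\bb P^H_{\subm}$. To transfer them I will use the fact, implicit in \eqref{correct_RN}, that $\frac{\dradon\bb P^H_{\subm}}{\dradon\bb P_{\subm}}\leq\exp\{C_H n\}$ for some constant $C_H$ depending only on $H$ through its $C^{1,2}$ norm (because $\eta\in\{0,1\}$ and all jump-rate modifications are uniformly bounded). Since Proposition~\ref{lem:rep_lem_sup} provides super-exponential decay under $\bb P_{\subm}$, this exponential growth of the density is absorbed and the same events have vanishing probability under $\bb P^H_{\subm}$, which is all that is needed for convergence in probability of the time integrals in (ii)--(iii).

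Finally, to identify $J^\theta_H(\rho^H)$ with $\I(\rho^H|\gamma)$, I invoke Proposition~\ref{prop6364}: testing the weak formulation \eqref{eq:weak_eq_theta_menor_um} or \eqref{eq:weak_eq_theta_maior_1} against $f=H$ itself gives $\ell^\theta_H(\rho^H)=2\int_0^T\<\chi(\rho^H_s),(\p_uH_s)^2\>\,ds=2\Phi_H(\rho^H)$, so $J^\theta_H(\rho^H)=\Phi_H(\rho^H)$. The same proposition shows $\sup_{G\in\C}J_G^\theta(\rho^H)=\Phi_H(\rho^H)$, hence $\I(\rho^H|\gamma)=J^\theta_H(\rho^H)$, yielding the claim. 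I expect the most delicate step to be (iii) in the regime $\theta\in(1,+\infty)$: the boundary traces of $\rho^H$ are well-defined thanks to $\rho^H\in L^2(0,T;\mc H^1)$, but turning the a.e. two-step replacement (single site $\to$ $\eps n$-average $\to$ trace) into an actual convergence under the perturbed law requires combining the replacement lemma with a Lebesgue-differentiation argument analogous to that used in Proposition~\ref{prop:weak_sol}.
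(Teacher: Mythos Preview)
Your proposal is correct and follows essentially the same approach as the paper. The paper packages the argument slightly differently: it restricts to the good set $B_{\eps,\delta}^{H,\theta}$, on which the Radon-Nikodym derivative equals $\exp\{n[J_H^\theta((\pi^n*\iog)*\ioe)+\text{errors}]\}$, and then invokes continuity of $\pi\mapsto J_H^\theta((\pi*\iog)*\ioe)$ together with the hydrodynamic limit under $\bb P^H_{\subm}$; your term-by-term treatment (bulk, carr\'e-du-champ, boundary, exchange) accomplishes the same thing without the double convolution, and your explicit identification $J_H^\theta(\rho^H)=\I(\rho^H|\gamma)$ via Proposition~\ref{prop6364} makes transparent a step the paper leaves implicit. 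One small remark: your claim that $X_n$ is uniformly bounded requires, for $\theta\in(0,1)$, that the divergent prefactor $n^{1-\theta}$ be multiplied by exactly zero --- this holds because $H\in\C=C^{1,2}_0$ forces $e^{\pm H_s(u_x)}-1=0$, as you correctly note later in (iv), but it is worth stating this already when asserting boundedness.
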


\begin{proof} Recall the definition of $B_{\eps,\delta}^{H,\theta}$ in \eqref{B_set}, which is super-exponentially small,  see \eqref{B_set_lim}. On the $B_{\eps,\delta}^{H,\theta}$, we have that the Radon-Nikodym derivative $\radonN$ is equal to
\begin{equation}\label{radonJJ}
\exp{\Big\{ N\Big[J_H^\theta\Big((\pi^N*\iog)*\ioe\Big)
 + O_{H,T,\eps,\gamma}(\pfrac{1}{N})+O(\delta)+O_H(\eps)+O_H(\pfrac{\gamma}{\eps})\Big]\Big\}}\,.
\end{equation}
The proof of the above assertion  is  technical and follows the same steps of~\cite{FN2017}.
In view of \eqref{ent} for the relative entropy, 
\begin{equation}\label{ent1} 
\frac{1}{n}\bs{H} \big(\bb P_{\subm}^{H}\vert\bb P_{\subm}\big)\;=\;
\frac{1}{n}\bb E_{\subm}^{H}\Big[\log\radonN\,\textbf{1}_{B_{\eps,\delta}^{H,\theta}}\Big]+\frac{1}{n}\bb E_{\subm}^{H}\Big[\log\radonN\,\textbf{1}_{(B_{\eps,\delta}^{H,\theta})^\complement}\Big]\,,
\end{equation}
where the  $B_{\eps,\delta}^{H,\theta}$ has been defined in \eqref{G_set}.  By \eqref{G_set_lim}, the complement of this set is super-exponentially small with respect to $\bb P_{\nu_{\gamma(\cdot)}^n}$. We  affirm now that the complement  is super-exponentially small also with respect to $\bb P_{\subm}^{H}$.
Indeed, by \eqref{correct_RN_1} there exists a constant $C(H,T)>0$ such that  
\begin{equation*}
 \bb P_{\subm}^{H}\Big[(B_{\eps,\delta}^{H,\theta})^\complement\Big]\;=\;
\bb E_{\subm}\Big[\,\radonN \,\textbf{1}_{(B_{\eps,\delta}^{H,\theta})^\complement}\Big]
\;\leq\; e^{C(H,T)n}\bb P_{\subm}\Big[(B_{\eps,\delta}^{H,\theta})^\complement\Big]
\end{equation*}
and  by \eqref{B_set_lim} we get
\begin{equation*}
 \varlimsup_{\eps\downarrow 0} \varlimsup_{n\to\infty}\tfrac 1n \log \bb P_{\subm}^H\Big[\big (B_{\eps,\delta}^{H,\theta}\big)^\complement\Big]\;=\;-\infty
\end{equation*}
concluding the proof of the affirmation. By the previous limit and  since $\frac{1}{n}\log\radonN$ is bounded, the right hand
side of \eqref{ent1} can be written as 
\begin{equation}\label{entrop}
\frac{1}{n}\bb E_{\subm}^H\Big[\log\radonN\,\textbf{1}_{B_{\eps,\delta}^{H,\theta}}\Big]+o_n(1)\,.
\end{equation}
By Theorem~\ref{thm:hid_lim_weak}, under $\bb P_{\subm}^H$ the probability concentrates on $\rho^H$. Since the functional $J_H^\theta((\pi^N*\iog)*\ioe)$ is continuous in the Skohorod topology,  recalling \eqref{radonJJ} the proof ends.
\end{proof}

\subsection{The \texorpdfstring{$\I$}{I}-density}
In the previous subsection we have achieved the lower bound for smooth profiles. Our task now consists on extending it to any profile. We start with the definition of $\I$-density.

\begin{definition}
Let $A$ be a subset of $\DM$. The set $A$ is said to be $\I(\cdot |\gamma)$-dense if for any $\pi\in \DM$
such that $\I(\pi|\gamma) < \infty$ there exists a sequence $\{\pi_n : n \geq  1\}$ in $A$ such that 
\begin{equation*}
\pi_n\to \pi \text{  in } \DM \qquad\text{ and }\qquad \I(\pi_n|\gamma)\to\I(\pi|\gamma)\,.
\end{equation*}
\end{definition}
Recall Definition~\ref{Pi}. 
The main result to be proved now is:
\begin{theorem}\label{Idensity}
The set $\Pi$ is $\I$-dense.
\end{theorem}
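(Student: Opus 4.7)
My plan is to exhibit, for any $\pi = \rho_t(u)\,du$ with $\mathbb I(\pi|\gamma)<\infty$, a sequence of smooth profiles $\rho^{\varepsilon,\delta}$ in $\Pi$ converging to $\rho$ in $\mathcal D_{\mathcal M}$ with $\mathbb I(\rho^{\varepsilon,\delta}|\gamma)\to \mathbb I(\rho|\gamma)$. The construction has three layers: a regularization (to obtain $C^{1,2}$ smoothness and preserve all structural constraints), a bounding-away from $\{0,1\}$ (so that $\chi(\rho)$ is invertible in the elliptic equations of Propositions~\ref{elliptic_<1}--\ref{elliptic_>1}), and finally the identification of $\rho^{\varepsilon,\delta}$ with an element of $\Pi$ via the corresponding elliptic problem.

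For the regularization, I would first mollify in time by convolving with a forward mollifier $\phi_\varepsilon(s)$ supported in $[-\varepsilon,0]$, so that the initial datum $\rho_0=\gamma$ is not altered (we can also glue $\gamma$ near $t=0$). In the Dirichlet case $\theta\in(0,1)$, spatial mollification must preserve the boundary values $\rho_t(0)=\alpha$, $\rho_t(1)=\beta$, which come for free on the trace since $\mathbb I(\pi|\gamma)<\infty$; this can be done by first replacing $\rho$ with $\rho - \bar\rho$ (where $\bar\rho$ is the stationary linear interpolant between $\alpha$ and $\beta$), extending it by odd reflection across $\{0\}$ and $\{1\}$, and mollifying in space. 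In the Neumann case $\theta\in(1,+\infty)$, even reflection is used instead, which preserves the mass constraint $\int \rho_t = \int \gamma$ and yields vanishing Neumann trace is not required (only mass conservation matters here). Next, form the convex combination $\rho^{\varepsilon,\delta} = (1-\delta)\,\rho^{\varepsilon} + \delta\,\bar\rho_\star$, where $\bar\rho_\star$ is a smooth profile bounded away from $0$ and $1$, chosen to respect the same constraints ($\bar\rho_\star(0)=\alpha$, $\bar\rho_\star(1)=\beta$ for $\theta<1$; $\int\bar\rho_\star = \int\gamma$ for $\theta>1$). Then $\rho^{\varepsilon,\delta}\in C^{1,2}$, satisfies $\eps_0\le \rho^{\varepsilon,\delta}\le 1-\eps_0$, and still satisfies the required boundary/mass constraints; moreover $\rho^{\varepsilon,\delta}\to \rho$ in $L^2(0,T;\mathcal H^1)$ as $\delta,\varepsilon\downarrow 0$.

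Having $\rho^{\varepsilon,\delta}$ smooth and bounded away from $\{0,1\}$, I would invoke Proposition~\ref{elliptic_<1} (resp.~\ref{elliptic_>1}) to obtain a unique $H^{\varepsilon,\delta}\in \mathbf C_\theta$ for which $\rho^{\varepsilon,\delta}$ is the classical solution of \eqref{eq:edpasy_menor_um} (resp.~\eqref{eq:edpasy=1_maior_um}). Hence $\rho^{\varepsilon,\delta}\in\Pi$. By Proposition~\ref{prop6364}, the rate function simplifies to the quadratic quantity
\begin{equation*}
\mathbb I(\rho^{\varepsilon,\delta}|\gamma) \;=\; \int_0^T \bigl\langle \chi(\rho^{\varepsilon,\delta}_s),(\partial_u H^{\varepsilon,\delta}_s)^2\bigr\rangle\,ds,
\end{equation*}
and from the explicit representation of $\partial_u H^{\varepsilon,\delta}$ in terms of $\rho^{\varepsilon,\delta}$ derived in the proofs of Propositions~\ref{elliptic_<1}--\ref{elliptic_>1}, one sees that this quantity depends continuously on $\rho^{\varepsilon,\delta}$ in the $L^2(0,T;\mathcal H^1)$ norm, as long as $\rho^{\varepsilon,\delta}$ stays uniformly bounded away from $0$ and $1$. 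Consequently, $\limsup_{\delta,\varepsilon\downarrow 0}\mathbb I(\rho^{\varepsilon,\delta}|\gamma) \le \mathbb I(\rho|\gamma)$. Combined with lower semicontinuity of $\mathbb I(\cdot|\gamma)$, this yields $\mathbb I(\rho^{\varepsilon,\delta}|\gamma)\to \mathbb I(\rho|\gamma)$, completing the density argument.

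The main obstacle is the compatibility of the regularization with the structural side-conditions that make $\mathbb I$ finite: the Dirichlet trace $\rho_t(0)=\alpha$, $\rho_t(1)=\beta$ in the subcritical case, and the mass conservation $\int\rho_t = \int\gamma$ in the supercritical case, as well as the initial datum $\rho_0=\gamma$. A careless convolution destroys any of these, pushing the approximants out of the set where $\mathbb I$ is finite (or out of $\mathcal F_\theta$), whence the need for the reflection/odd-extension trick in the spatial mollification and a one-sided (causal) mollifier in time. A secondary technical issue is the convergence $\mathbb I(\rho^{\varepsilon,\delta}|\gamma)\to \mathbb I(\rho|\gamma)$ near the endpoints $\delta=0$, where $\chi(\rho)^{-1}$ appears in the expression for $\partial_u H$; the convex combination with $\bar\rho_\star$ is what tames this singularity and allows one to interchange limits, following the argument of \cite[Section 4]{LandimTsunoda}.
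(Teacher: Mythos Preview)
Your construction has a genuine gap at the initial time. You want $\rho^{\varepsilon,\delta}$ to be simultaneously in $C^{1,2}([0,T]\times[0,1])$ (so that Propositions~\ref{elliptic_<1}--\ref{elliptic_>1} apply) and to satisfy $\rho^{\varepsilon,\delta}_0=\gamma$ exactly (so that $\I(\rho^{\varepsilon,\delta}|\gamma)<\infty$). But $\gamma$ is only assumed continuous, so these two requirements are incompatible: your causal time-mollifier forces $\rho^\varepsilon_0=\gamma$, which is then not smooth in $u$; and to preserve the initial datum after the convex combination you would need $\bar\rho_\star(0,\cdot)=\gamma$, again only continuous. ``Gluing $\gamma$ near $t=0$'' does not help either, since a time-constant piece equal to $\gamma$ cannot be realized as a weak solution of the perturbed equation for any $H\in\C$ (one would need $\Delta\gamma$ to exist).

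The paper's remedy --- the step you are missing --- is to insert a short interval $[0,2\delta]$ on which the approximant first follows the \emph{unperturbed} hydrodynamic equation starting from $\gamma$, then that solution reversed in time, and only afterwards is glued to the original path; see the set $\Pi_1$ and the construction \eqref{rhodelta}. This costs zero on $[0,\delta]$ by Lemma~\ref{lemma69}, keeps $\rho_0=\gamma$ exactly, and exploits parabolic smoothing so the profile is $C^\infty$ for $t>0$; the subsequent regularizations (bounding away from $\{0,1\}$, then smoothing in space and in time) are performed only on $[\delta,T]$ via the chain $\Pi_1\supset\Pi_2\supset\Pi_3\supset\Pi_4$. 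A second missing ingredient is the a~priori Fisher-type bound of Proposition~\ref{prop68}, namely $\int_0^T\!\int_0^1 (\partial_u\rho)^2/\chi(\rho)\,du\,dt \lesssim \I(\rho|\gamma)+1$: your continuity claim for $\I(\rho^{\varepsilon,\delta}|\gamma)\to\I(\rho|\gamma)$ needs uniform control as $\delta\downarrow 0$ precisely where $\chi(\rho)$ may vanish, and it is this estimate --- not the convex combination alone --- that tames the singularity in the $\limsup$ direction.
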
 
The statement above does not involve probability: it is a purely analytical result. Thus, since the $\I$ functional for $\theta\in(0,1)$ coincides with the rate functional of \cite{blm} under the assumption that the external field there considered is null, we thus may apply \cite[Theorem 5.1]{blm} in this case. 

From this point on we will deal only with the case $\theta\in(1,+\infty)$, where the proof of Theorem~\ref{Idensity} is split into intermediate lemmas. We start with a key technical result in the arguments, in whose proof we mix ideas from \cite{flm} and \cite{LandimTsunoda}. 
\begin{proposition}\label{prop68} Let $\theta\in(1,+\infty)$. There exists a constant $\tilde{C}_0>0$ such that, for any $\rho\in \DM$, it holds that
\begin{align}\label{obj}
\int_0^T\int_0^1 \frac{(\p_u\rho_t(u)\big)^2}{\chi(\rho_t(u))}dudt\;\leq \; \tilde{C}_0 \big(\I (\rho)+1\big)\,.
\end{align}
\end{proposition}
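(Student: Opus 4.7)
The plan is to exhibit a near-optimal test function in the variational formula $\I(\rho|\gamma)=\sup_{H\in\C}J_H^\theta(\rho)$, with $\C=C^{1,2}$ for $\theta\in(1,+\infty)$. A formal Legendre maximization in $H$ of the bulk quadratic form $\int_0^T\langle\p_u\rho,\p_uH\rangle\,ds-\int_0^T\langle\chi(\rho),(\p_uH)^2\rangle\,ds$ identifies the Euler--Lagrange relation $\p_uH^\ast=\p_u\rho/(2\chi(\rho))$, suggesting the ansatz $H^\ast_t(u)=F(\rho_t(u))$ with
$$F(r):=\tfrac12\log\tfrac{r}{1-r},\qquad F'(r)=\tfrac{1}{2\chi(r)},\qquad h(r):=\int_0^r F(s)\,ds=\tfrac12[r\log r+(1-r)\log(1-r)].$$
Note that $|h|\leq\tfrac12\log 2$ on $[0,1]$. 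For smooth $\rho$ valued in $(0,1)$, the chain rule combined with a time integration by parts yields
$$J^\theta_{F(\rho)}(\rho)=\int_0^1[h(\rho_T)-h(\rho_0)]\,du+\tfrac14\int_0^T\!\!\int_0^1\frac{(\p_u\rho)^2}{\chi(\rho)}\,du\,dt,$$
so that $\I(\rho|\gamma)\geq J^\theta_{F(\rho)}(\rho)$ already delivers \eqref{obj} with $\tilde C_0=4$ and an additive constant absorbed into the ``$+1$''.

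The obstruction is admissibility: $F$ blows up at $\{0,1\}$, and a general $\rho\in\DM$ with $\I(\rho|\gamma)<\infty$ is only in $L^2(0,T;\mc H^1)$ (via the energy estimate embedded in $\I$), so $F(\rho)\notin C^{1,2}=\C$. I regularize in two stages. Extend $\rho$ outside $[0,T]$ by the (well-defined, since $\rho\in\DM$) endpoints $\rho_0,\rho_T$, and convolve in $(t,u)$ with a smooth symmetric mollifier $\phi_\delta$ to produce $\rho^\delta\in C^\infty$ with values in $[0,1]$, satisfying $\rho^\delta\to\rho$ and $\p_u\rho^\delta\to\p_u\rho$ in $L^2$. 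Fix $M\geq 2$ and a smooth truncation $F_M\in C^\infty(\bb R)$ with $F_M=F$ on $[1/M,1-1/M]$, $F_M$ and $F_M'$ uniformly bounded, $F_M'\geq 0$, and $F_M'\uparrow F'$ on $(0,1)$ as $M\to\infty$. Then $H^{\delta,M}:=F_M(\rho^\delta)$ belongs to $C^{1,2}$ and is admissible. Plugging it into $\I(\rho|\gamma)\geq J^\theta_{H^{\delta,M}}(\rho)$ and passing $\delta\downarrow 0$ (dominated convergence, thanks to the uniform bounds on $F_M,F_M'$) followed by $M\uparrow\infty$ (monotone convergence of the two bulk integrands on $\{0<\rho<1\}$, together with uniform boundedness of the endpoint contributions $\int[h_M(\rho_T)-h_M(\rho_0)]\,du$ with $h_M(r):=\int_0^rF_M(s)\,ds$) recovers the formal identity above.

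The main obstacle is the $\delta\downarrow 0$ limit of the time-derivative contribution $-\int_0^T\langle\rho_s,\p_sH^{\delta,M}_s\rangle\,ds=-\int_0^T\!\!\int\rho_sF_M'(\rho^\delta_s)\p_s\rho^\delta_s\,du\,ds$, since $\rho$ lacks a priori time regularity and $\p_s\rho^\delta$ is not uniformly controlled in $\delta$. To handle it, I rewrite $F_M'(\rho^\delta)\p_s\rho^\delta=\p_s[F_M(\rho^\delta)]$ and split $\rho_s=\rho^\delta_s+(\rho_s-\rho^\delta_s)$. For the diagonal piece, the chain rule applied to $G_M$ with $G_M'(r)=rF_M'(r)$ reduces $\int\rho^\delta_s\p_s[F_M(\rho^\delta_s)]\,du\,ds$ to the endpoint expression $\int[G_M(\rho^\delta_T)-G_M(\rho^\delta_0)]\,du$, which combines with $\langle\rho_T^\delta,F_M(\rho_T^\delta)\rangle-\langle\rho_0^\delta,F_M(\rho_0^\delta)\rangle$ to produce $\int[h_M(\rho_T^\delta)-h_M(\rho_0^\delta)]\,du$. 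For the off-diagonal piece, the symmetry of $\phi_\delta$ lets me transfer one convolution from $\rho$ onto $F_M(\rho^\delta)$, and the $L^2$-convergence $\rho^\delta\to\rho$ together with the uniform bounds on $F_M,F_M'$ makes the correction vanish as $\delta\to 0$. The endpoint terms then pass to $\int[h(\rho_T)-h(\rho_0)]\,du$ as $M\to\infty$ by the right-continuity at $t=0$ and left-limit at $t=T$ of $\rho\in\DM$.
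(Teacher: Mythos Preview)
Your approach is essentially the same as the paper's: both plug an entropy-type test function $H=h'(\rho)$ into the variational formula, mollify $\rho$ in space--time to make the test function admissible, and handle the dangerous time-derivative term via the diagonal/off-diagonal split $L_H(\rho)=L_H(\rho^\delta)+\{L_H(\rho)-L_H(\rho^\delta)\}$, with the diagonal piece reducing to bounded endpoint contributions and the off-diagonal piece vanishing as the time-mollification parameter goes to zero (the paper defers this last step to \cite[Lemma~4.4]{LandimTsunoda}, at the same level of detail as your convolution-transfer sketch). The only real difference is the regularization device: you truncate $F(r)=\tfrac12\log\tfrac{r}{1-r}$ to $F_M$ and then send $M\to\infty$ by monotone convergence, whereas the paper uses the additive shift $h_a(x)=(x+a)\log(x+a)+(1-x+a)\log(1-x+a)$ and sends $a\downarrow 0$ via Fatou. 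Your normalization (the factor $\tfrac12$ in $F$) is the natural one coming from the Euler--Lagrange relation $\partial_u H^\ast=\partial_u\rho/(2\chi(\rho))$ and yields the clean coefficient $\tfrac14$ in the bulk; the paper's $h_a'(\rho)\to\log\tfrac{\rho}{1-\rho}=2F(\rho)$ is twice the optimizer, which makes the bulk computation there more delicate.
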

\begin{proof}
In what follows, assume $\pi\in\DM$   to be such that $\I(\pi|\gamma)<\infty$, otherwise  \eqref{obj} is trivial. Since $\I(\pi\gamma)<\infty$, then $\pi(t,du)=\rho(t,u)du$ with $\rho\in\Sob $ and from an integration by parts we have that 
\begin{align*}
\I(\pi|\gamma)  \;=\; \sup_{H\in \C }J^\theta_H(\rho) \;=\; \sup_{H\in \C} \Big\{L_H(\rho) + B_H(\rho) \Big\}\,,
\end{align*}
where 
\begin{align*}
L_H(\rho)& \;=\; \<\rho_T,H_T\> - \<\rho_0,H_0\>- \int_0^T\<\rho_s, \p_s H_s\>ds\qquad \text{ and }\\
B_H(\rho) & \;=\; \int_0^T\<\p_u\rho_s, \p_u H_s\>ds - \int_0^T\<\chi\big(\rho_s\big), \big(\p_u H_s\big)^2\>ds\,.
\end{align*} For $a\in(0,1)$, let $h_a:[0,1]\to \bb R$ be the  function defined by
\begin{align*}
h_a(x)\;=\;(x+a)\log(x+a) +(1-x+a)\log(1-x+a) 
\end{align*}
 whose first and second derivatives are, respectively,
\begin{align*}
 h_a'(x) & \;=\;  \log \bigg( \frac{x+a}{1-x+a}\bigg) \qquad \text { and } \qquad  h_a''(x) \;=\; \frac{1+2a}{(x+a)(1-x+ a)}\,.
\end{align*}
It is elementary to check that $-\log 2\leq h_a(x)\leq \log 4$ for all $x\in(0,1)$. Let
\begin{align*}
H_\rho \;:=\; h_a'(\rho)\,.
\end{align*}
 Since the space integrals above are with respect to the Lebesgue measure, we can see the integrated functions as functions defined on the continuous torus $\bb T=[0,1)$ rather than  on the interval $[0,1]$.  Moreover, we extend (on the time parameter) the functions above from $[0,T]$ to some open interval $(c,d)$ containing $[0,T]$ by imposing that the extension is constant on  $(c,0]$ and $[T,d)$, that is, given $f\colon [c,d]\times \bb T\to \bb R$, its extension  $\overline{f}\colon [c,d]\times \bb T\to \bb R$ will be defined by 
 \begin{align*}
 \overline{f}(t,u)\;=\; \begin{cases}
 f(t,u)\,,& \text{ if } (t,u) \in [0,T]\times \bb T\,,\\
  f(0,u)\,,& \text{ if } (t,u) \in (c,0)\times \bb T\,,\\
    f(T,u)\,,& \text{ if } (t,u) \in (T,d)\times \bb T\,.
 \end{cases}
 \end{align*}
 
Abusing of notation, let $\iota_\delta$ and $\iota_\eps$ be smooth approximations of the identity on $\bb T$ and $(a,b)$, respectively.    Let  $H_{\rho^{\eps,\delta}}:=h'_a(\rho^{\eps,\delta})$ where $\rho^{\eps,\delta}$ is a convolution in space and in time  (on the parameters $\eps$ and $\delta$, respectively) of the function $\rho$, that is, 
     \begin{align*}
     \rho^{\eps,\delta} (u,t) \;\coloneqq\;  \big(\rho*\iota_\eps*\iota_\delta\big)(u,t)\;=\; \int_{(a,b)}\int_{\bb T} \rho(s,v) \iota_{\eps}(u-v)\iota_{\delta}(t-s)dvds\,.
     \end{align*}
         Note now that
\begin{align*}
 \sup_{H\in \C} \Big\{L_H(\rho) + B_H(\rho) \Big\} & \;\geq \; L_{H^{\eps,\delta}_\rho}(\rho) + B_{H^{\eps,\delta}_\rho}(\rho) \\
 &\;=\;  L_{H_\rho^{\eps,\delta}}(\rho^{\eps,\delta})+\Big\{ L_{H^{\eps,\delta}_\rho}(\rho) - L_{H_\rho^{\eps,\delta}}(\rho^{\eps,\delta})\Big\}+ B_{H^{\eps,\delta}_\rho}(\rho)\,.
\end{align*}              
At this point we must  handle each of the parcels above.
By the chain rule and Fubini's Theorem, 
\begin{align*}
L_{H_{\rho^{\eps,\delta}}}(\rho^{\eps,\delta}) & \;=\; \int_0^T \<\p_s\rho^{\eps,\delta}, H_{\rho^{\eps,\delta}}\> ds  \;=\; \int_0^T \int_{\bb T}\p_s\rho^{\eps,\delta}_s(u)h'_a\big(\rho^{\eps,\delta}_s(u)\big) du ds \\
& \;=\;\int_{\bb T} \int_0^T \p_s\Big(h_a\big(\rho^{\eps,\delta}_s(u)\big)\Big) ds du\;=\; \int_{\bb T} \Big\{h_a\big(\rho^{\eps,\delta}_T(u)\big)- h_a\big(\rho^{\eps,\delta}_0(u)\big)\Big\}du
\end{align*}
and from $-\log 2\leq h_a(\cdot)\leq \log 4$ we infer that  
\begin{align}\label{fact_1}
L_{H_{\rho^{\eps,\delta}}}(\rho^{\eps,\delta}) \;\geq \; -(\log 2+\log 4)\;=\; -3\log2  \,.
\end{align}
By the same arguments of  \cite[Lemma 4.4]{LandimTsunoda},  for any fixed  $\eps>0$,
\begin{align}\label{fact_2}
\lim_{\delta\searrow 0} \Big\{ L_{H_{\rho^{\eps,\delta}}}(\rho) - L_{H_{\rho^{\eps,\delta}}}(\rho^{\eps,\delta})\Big\}\;=\;0\,.
\end{align}
Finally, $B_{H^{\eps,\delta}_\rho}(\rho)$ converges, as $\eps$ and $\delta$ decrease to zero, to 
\begin{align*}
B_{H_\rho}(\rho) \;=\; &\int_0^T\big\<\p_u\rho, \p_u h_a'(\rho)\big\>ds - \int_0^T\big\<\chi(\rho), \big(\p_u h_a'(\rho)\big)^2\big\>ds\\
\;\geq\;&  \int_0^T\Big\<\p_u\rho, \frac{(1+2a)\p_u \rho}{(\rho+a)(1-\rho+a)}\Big\>ds - \int_0^T\big\<\frac{1}{4}, \frac{(1+2a)^2(\p_u \rho)^2}{(\rho+a)^2(1-\rho+a)^2}\big\>ds\,.
\end{align*}
Taking the $\liminf$ as $a\searrow 0$, applying Fatou's Lemma and recalling \eqref{fact_1} and \eqref{fact_2}, we are lead to 
\begin{align*}
\I(\pi|\gamma) \;\geq \; -3\log 2 +\frac{3}{4}\int_0^T\int_0^1 \frac{(\p_u\rho_t(u)\big)^2}{\chi(\rho_t(u))}dudt
\end{align*}
finishing the proof.
\end{proof}
\begin{lemma}\label{lemma69}
The density $\rho$ of a trajectory $\pi\in \DMO$ is the weak solution of hydrodynamic equation  \eqref{hydroeq_Neumann} with initial condition $\gamma$ if, and only if, $\I(\pi|\gamma)=0$. Moreover, in such  case we have that
\begin{align}\label{cota}
\int_0^T\int_0^1 \frac{(\p_u\rho_t(u)\big)^2}{\chi(\rho_t(u))}dudt\;<\;\infty\,.
\end{align}
\end{lemma}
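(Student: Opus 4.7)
The plan is to turn the condition $\I(\pi|\gamma)=0$ into the vanishing of the linear form $\ell_H^\theta(\rho)$ for every $H\in C^{1,2}$, and then to recognise that vanishing as the weak formulation \eqref{eq:weak_eq_theta_maior_1} of the Neumann heat equation \eqref{hydroeq_Neumann}. The key structural input is that $J_H^\theta(\pi)=\ell_H^\theta(\rho)-\Phi_H(\rho)$ is linear in $H$ through $\ell_H^\theta$ and quadratic in $\partial_u H$ through $\Phi_H$, so that the variational sup reduces to a one-dimensional quadratic optimisation in a scaling parameter $\lambda$. The bound \eqref{cota} will be an immediate consequence of Proposition~\ref{prop68}.

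For the direction ``$\rho$ is a weak solution $\Rightarrow\I(\pi|\gamma)=0$'', parabolic regularity of the Neumann problem supplies $\rho\in L^2(0,T;\mc H^1)$ and hence $\mc E(\pi)<\infty$; testing \eqref{eq:weak_eq_theta_maior_1} with $H\equiv 1$ yields conservation of total mass, so $\pi\in\Ftheta$. The weak formulation is exactly the identity $\ell_H^\theta(\rho)=0$ for all $H\in C^{1,2}$, so $J_H^\theta(\pi)=-\Phi_H(\rho)\leq 0$, with equality at $H\equiv 0$, whence $\I(\pi|\gamma)=0$.

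For the converse, suppose $\I(\pi|\gamma)=0$. Finiteness of $\I$ forces $\pi\in\Ftheta$ and $\mc E(\pi)<\infty$, so $\rho\in L^2(0,T;\mc H^1)$. For any $H\in C^{1,2}$ and $\lambda\in\mathbb R$, testing with $\lambda H$ gives
\[
0\;\geq\;J_{\lambda H}^\theta(\pi)\;=\;\lambda\,\ell_H^\theta(\rho)\,-\,\lambda^2\,\Phi_H(\rho),
\]
a downward parabola in $\lambda$ whose maximum equals $(\ell_H^\theta(\rho))^2/(4\Phi_H(\rho))$ when $\Phi_H(\rho)>0$ and equals $+\infty$ in the degenerate case $\Phi_H(\rho)=0$ unless $\ell_H^\theta(\rho)=0$. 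Non-positivity of this maximum therefore forces $\ell_H^\theta(\rho)=0$ for every $H\in C^{1,2}$, which is the weak formulation of \eqref{hydroeq_Neumann} with the initial datum $\rho_0$ of $\pi$. Combined with the uniqueness result from Subsection~\ref{subsec:uniqueness} and the hypothesis $\pi_0(du)=\gamma(u)\,du$ (implicit in the finiteness of $\I(\cdot|\gamma)$), this identifies $\rho$ as the unique weak solution of \eqref{hydroeq_Neumann} with initial condition $\gamma$. The bound \eqref{cota} follows at once from Proposition~\ref{prop68} applied with $\I(\pi|\gamma)=0$.

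The main delicate point I anticipate is the matching $\rho_0=\gamma$: the functional $\ell_H^\theta(\rho)$ as written involves $\<\rho_0,H_0\>$ rather than $\<\gamma,H_0\>$, so recovering the correct initial datum relies on the convention that paths not initialised at $\gamma(u)\,du$ are charged infinite cost; once this is accepted, the remainder is a clean quadratic duality argument.
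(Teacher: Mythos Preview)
Your argument is correct and follows essentially the same route as the paper: both directions exploit the quadratic structure $J_{\lambda H}^\theta(\pi)=\lambda\,\ell_H^\theta(\rho)-\lambda^2\Phi_H(\rho)$, and the bound \eqref{cota} is deduced from Proposition~\ref{prop68}. The paper phrases the converse as ``the derivative of $J_{\eps H}(\rho)$ in $\eps$ vanishes at $\eps=0$'', which is exactly your parabola argument. Your explicit flagging of the initial-condition issue $\rho_0=\gamma$ is a point the paper glosses over; as you note, it relies on the convention that trajectories not started from $\gamma$ receive infinite cost.
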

\begin{proof}
Suppose that the density $\rho$ of a trajectory $\pi\in \DMO$ is the weak solution of hydrodynamic equation  \eqref{hydroeq_Neumann} with initial condition $\gamma$.  Then, for $H\in C^{1,2}$,
\begin{align*}
J_H(\rho)\;=\; -\int_0^T \< \chi(\rho_s), (\p_u H_s)^2 \>\,ds\;\leq \;0\,.
\end{align*}
Moreover, since $\rho$ is the weak solution of \eqref{hydroeq_Neumann}, it is easy to check that the total mass of $\pi_t(du)=\rho_t du$ is conserved in time, that is, $\pi\in \mc F^\theta$, see \eqref{Ftheta}. This implies that $\I(\pi|\gamma)=0$.

Suppose now that $\I(\pi|\gamma)=0$. Therefore $J_{\eps H}(\rho)\leq 0$ for any $H\in C^{1,2}$ , which in its turn implies that the derivative of $J_{\eps H}(\rho)\leq 0$ with respect to $\eps$ is zero at $\eps=0$. This permits to conclude that the density $\rho$  is the weak solution of hydrodynamic equation  \eqref{hydroeq_Neumann} with initial condition $\gamma$.

Finally, if $\I(\pi|\gamma)<\infty$, then \eqref{cota} holds by Proposition~\ref{prop68}.
\end{proof}

Let $\Pi_1$ be the set of all paths $\pi(t, du) = \rho(t, u) du$ in $\DMO$ whose density $\rho$ is a weak solution of the
Cauchy problem (2.2) on some time interval $[0, \delta]$,  with $\delta > 0$.
\begin{lemma}
The set $\Pi_1$ is $\I$-dense.
\end{lemma}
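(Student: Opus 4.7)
The plan is to take $\pi \in \DM$ with $\I(\pi|\gamma) < \infty$ and exhibit a sequence $\{\pi^\delta\}_{\delta > 0}$ in $\Pi_1$ satisfying $\pi^\delta \to \pi$ in $\DM$ and $\I(\pi^\delta|\gamma) \to \I(\pi|\gamma)$. The key observation is that, by Lemma~\ref{lemma69}, the weak solution of the hydrodynamic equation has zero rate, so prepending a short piece of hydrodynamic evolution at the start of any trajectory should perturb the rate function only by a controlled amount.

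Concretely, given $\pi(t,du) = \rho_t(u)\,du$, let $\bar\rho$ denote the unique weak solution of \eqref{hydroeq_Neumann} with initial datum $\gamma$, which enjoys parabolic regularization for positive times. For small $\delta > 0$ I define the density of $\pi^\delta$ by
\[
\rho^\delta_t \;:=\;
\begin{cases}
\bar\rho_t\,, & 0 \le t \le \delta,\\
\rho_{t-\delta}\,, & \delta \le t \le T,
\end{cases}
\]
extending $\rho$ past $T$ by its terminal value if needed. By construction $\rho^\delta$ is a weak solution of \eqref{hydroeq_Neumann} on $[0,\delta]$, so $\pi^\delta \in \Pi_1$. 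The Skorohod convergence $\pi^\delta \to \pi$ follows because $\bar\rho_\delta \to \gamma$ in $\mc M$ as $\delta \searrow 0$ by parabolic continuity, which kills the discontinuity at $t = \delta$ through an explicit time reparametrization in the spirit of Skorohod's theorem.

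For the convergence of rate functions, lower semicontinuity of $\I$ immediately yields $\liminf_{\delta \to 0} \I(\pi^\delta|\gamma) \ge \I(\pi|\gamma)$. For the reverse inequality, for any $H \in \C = C^{1,2}$ I split the time integrals in $J_H^\theta(\rho^\delta)$ into $[0,\delta]$ and $[\delta, T]$. On $[0,\delta]$, using that $\bar\rho$ solves \eqref{hydroeq_Neumann} together with the computation of Proposition~\ref{prop6364}, the contribution to $J_H^\theta$ equals $-\int_0^\delta \langle \chi(\bar\rho_s),(\p_u H_s)^2\rangle\, ds$, which is nonpositive and of order $O_H(\delta)$. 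On $[\delta, T]$, the substitution $s \mapsto s - \delta$ and the choice of translated test function $\tilde H_s := H_{s+\delta}$ identifies the contribution with $J^\theta_{\tilde H}(\rho)$ computed on the shortened time interval $[0, T-\delta]$. Sending $\delta \to 0$ recovers $J_H^\theta(\rho) \le \I(\pi|\gamma)$, and the boundary contributions at $t = \delta$ arising from the jump $\bar\rho_\delta - \gamma$ vanish by parabolic continuity. Taking the supremum over $H$ completes the proof of the upper bound.

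The hardest step will be controlling the boundary/jump contribution at $t = \delta$ uniformly with respect to the supremum defining $\I(\pi^\delta|\gamma)$. A priori, a jump in the measure-valued trajectory could introduce uncontrolled terms through the $\ell_H^\theta$ pairing. The decisive tool here is Proposition~\ref{prop68}, whose estimate $\int_0^T\int_0^1 (\p_u\rho)^2/\chi(\rho)\, du\, dt \le \tilde C_0 (\I(\pi|\gamma) + 1)$ provides the $\mc H^1$-regularity needed to dominate the error terms, while the parabolic smoothing of $\bar\rho$ supplies a quantitative rate of decay for $\|\bar\rho_\delta - \gamma\|$ as $\delta \to 0$. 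A secondary technicality, that of interchanging $\sup_H$ with the limit $\delta \to 0$, is settled by a minimax argument analogous to the one used in Subsection~\ref{sub_5.3}, restricting attention to $H$ lying in sublevel sets determined by the a priori bound.
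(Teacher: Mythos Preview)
Your construction has a genuine gap: the trajectory $\rho^\delta$ you define is discontinuous at $t=\delta$, since $\rho^\delta_{\delta^-}=\bar\rho_\delta$ while $\rho^\delta_{\delta^+}=\rho_0=\gamma$, and for a generic (non-stationary) $\gamma$ these differ. You correctly flag this as the ``hardest step'', but the difficulty is fatal rather than merely technical: a jump in the trajectory forces $\I(\pi^\delta|\gamma)=+\infty$ for each fixed $\delta>0$. To see this, take $g\in C^2$ with $\Delta:=\langle\gamma-\bar\rho_\delta,g\rangle\neq 0$, a smooth bump $\phi_\eps$ with $\phi_\eps(\delta)=1$ supported on $(\delta-\eps,\delta+\eps)$, and $H_s(u)=c\,\phi_\eps(s)\,g(u)$. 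A direct computation gives
\[
J_H^\theta(\rho^\delta)\;\geq\; c\,\Delta - O(c\sqrt{\eps}) - O(c^2\eps)\,,
\]
and sending $\eps\downarrow 0$ first and then $c\to\infty$ yields $\sup_H J_H^\theta(\rho^\delta)=+\infty$. Neither Proposition~\ref{prop68} nor the parabolic decay of $\|\bar\rho_\delta-\gamma\|$ as $\delta\to 0$ can rescue this, because the supremum defining $\I$ is already infinite before any limit in $\delta$ is taken; there is nothing for a minimax argument to interchange.

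The paper avoids this by a ``there and back'' construction: follow the hydrodynamic flow $\lambda$ forward on $[0,\delta]$, then follow $\lambda$ \emph{reversed in time} on $[\delta,2\delta]$, returning exactly to $\gamma$ at time $2\delta$, and only then graft on the time-shifted $\rho$. This makes $\rho^\delta$ continuous in time. The first piece has zero cost by Lemma~\ref{lemma69}; the new reversed piece contributes at most $\int_0^\delta\!\int_0^1 (\p_u\lambda_t)^2/\chi(\lambda_t)\,du\,dt$, which vanishes as $\delta\downarrow 0$ by Lemma~\ref{lemma69} and dominated convergence; and the third piece is bounded by $\I(\pi|\gamma)$. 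The time-reversal trick is the missing idea in your approach.
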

\begin{proof}
The proof here follows the same steps of \cite[Lemma 5.3]{LandimTsunoda}.
Fix $\pi_t=\rho(t,u)du\in \DMO$ such that $\I(\pi|\gamma)<\infty$.  Let $\lambda$ be the solution of the hydrodynamic equation \eqref{hydroeq_Neumann} with $\rho_0=\gamma$. For $\delta>0$, let $\pi_t^\delta=\rho^\delta(t,u)du$ where $\rho^\delta$ evolves as $\lambda$ on the time interval $[0,\delta]$, then evolves as $\lambda$ reversed in time on $[\delta, 2\delta]$ and then evolves as $\rho$ in the remaining time interval, that is,
\begin{equation}\label{rhodelta}
\rho^\delta(t,u)\;=\;
\begin{cases}
\lambda(t,u) & \text{ if } t\in [0,\delta]\,,\\
\lambda(2\delta -t,u) & \text{ if } t\in  [\delta, 2\delta]\,,\\
\rho(t-2\delta, u) & \text{ if } t\in [2\delta, T]\,.
\end{cases}
\end{equation}
Since $\pi^\delta$ converges to $\pi$ in $\DM$ as $\delta\downarrow 0$ and $\pi^\delta\in \Pi_1$, it only remains to show that $\I(\pi^\delta|\gamma)$ converges to $\I(\pi|\gamma)$  as $\delta\downarrow 0$.
By the lower semi-continuity of the rate function, we have
$\I(\pi|\gamma)\leq\liminf_{\delta\to 0}\I(\pi^\delta|\gamma)$
hence it is missing to assure that
\begin{equation}\label{sup}
\I(\pi|\gamma)\;\geq\;\limsup_{\delta\to 0}\I(\pi^\delta|\gamma)\,.
\end{equation} 
To do so, note that 
\begin{align*}
\mc E_H(\pi^\delta)\;\leq\; 2 \mc E_H(\lambda) + \mc E_H(\pi)\;<\;\infty\,,
\end{align*}
where the last inequality above is due to the assumption $\I(\pi|\gamma)<\infty$ and Lemma~\ref{lemma69}. Using this and the fact the profile $\rho^\delta$ conserves the total mass we can infer that $\I(\pi^\delta|\gamma)<\infty$ for any $\delta$.

By linearity of integrals, we will analyze separately  the contributions on $\I(\pi^\delta|\gamma)$ from the three time intervals of \eqref{rhodelta}.
 The contribution of $[0,\delta]$ is zero by Lemma~\ref{lemma69}. 

Since the Neumann boundary conditions are invariant by a time inversion,   the profile $\rho^\delta$ is a weak solution on the time interval $[\delta, 2\delta]$ of 
\begin{equation*}
\begin{cases}
\p_t \rho(t,u)= -\p_u^2 \rho(t,u)\\
\p_u \rho(t,0) =\p_u \rho(t,1)=0
\end{cases}
\end{equation*}
which allows to conclude that the second contribution is given by
\begin{align}
\sup_{H\in C^{1,2}} \bigg\{\int_0^\delta \bigg(2\<\p_u \lambda_t, \p_u H\>- \<\chi(\lambda_t), (\p_u H)^2\>\bigg)dt \bigg\}\,.
\end{align}
Multiplying and diving the leftmost term inside parenthesis of last expression by $\sqrt{\chi(\lambda_t)}$ and applying Young's inequality $ab\leq a^2/2+b^2/2$, we can bound the previous expression from above by 
\begin{align*}
\int_0^\delta\int_0^1 \frac{(\p_u\lambda_t(u)\big)^2}{\chi(\lambda_t(u))}dudt
\end{align*}
which goes to zero as $\delta\searrow 0$ by Lemma~\ref{lemma69} and Dominated Convergence Theorem.

Finally, the third contribution is bounded above by $\I(\pi|\gamma)$ since $\pi^\delta$  on this interval is a time translation of $\pi$. Putting these things together leads to \eqref{sup} and hence finishes the proof.

\end{proof}
Next, we present the  sets $\Pi_2$, $\Pi_3$ and $\Pi_4$. 
Let $\Pi_2$ be the set of all paths $\pi(t, du) = \rho(t, u) du$ in $\Pi_1$ with the property that for every $\delta > 0$ there exists $\eps > 0$
such that $\eps \leq \rho(t, u) \leq 1 - \eps$ for all $(t, u) \in [\delta, T] \times [0,1]$.
Let $\Pi_3$ be the set of all paths $\pi(t, du) = \rho(t, u) du$ in $\Pi_2$ whose density $\rho(t,u)\,du$ belongs to the space $C^\infty[0,1]$ for any $t\in[0,T]$.
Let $\Pi_4$ be the set of all paths $\pi(t, du) = \rho(t, u) du$ in $\Pi_3$ whose density $\rho(t, u)\,du$ belongs to the space $C^{\infty,\infty}([0,T]\times [0,1])$.
\begin{lemma}\label{lemma5.456}
The sets $\Pi_2$, $\Pi_3$ and $\Pi_4$ are $\I$-dense.
\end{lemma}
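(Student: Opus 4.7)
The plan is to prove the $\I$-density of $\Pi_2$, $\Pi_3$, $\Pi_4$ in sequence, each stage refining the approximation of the previous one by a further mollification; throughout we restrict to the Neumann regime $\theta \in (1, +\infty)$ as announced and may assume $\I(\pi|\gamma) < \infty$, since otherwise there is nothing to approximate.

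For the $\Pi_2$-density of $\Pi_1$, fix $\pi \in \Pi_1$ with density $\rho$, and let $\lambda$ denote the weak solution of \eqref{hydroeq_Neumann} with initial datum $\gamma$. Since $\gamma$ is continuous and bounded away from $0$ and $1$, the maximum principle for the Neumann heat equation yields $\eta_0 > 0$ with $\eta_0 \leq \lambda(t,u) \leq 1-\eta_0$ on $[0,T]\times[0,1]$. Define $\rho^\eps := (1-\eps)\rho + \eps\lambda$. Then $\rho^\eps(0,\cdot)=\gamma$, $\rho^\eps \in [\eps\eta_0,\,1-\eps\eta_0]$ on $[0,T]\times[0,1]$, mass is conserved by both components and hence by $\rho^\eps$, and by linearity of the heat equation $\rho^\eps$ solves \eqref{hydroeq_Neumann} on the same initial interval as $\rho$. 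Therefore $\pi^\eps(t,du):=\rho^\eps(t,u)\,du$ belongs to $\Pi_2$ and converges to $\pi$ in $\DM$. For the rate function, observe that $\rho\mapsto J_H^\theta(\rho)=\ell_H^\theta(\rho)-\Phi_H(\rho)$ is convex for each fixed $H$ (the first term is affine; $\Phi_H$ is concave in $\rho$ because $\chi$ is concave, so $-\Phi_H$ is convex), hence $\I(\cdot|\gamma)=\sup_H J_H^\theta$ is convex. Since $\I(\lambda|\gamma)=0$ by Lemma~\ref{lemma69}, convexity yields $\I(\pi^\eps|\gamma)\leq(1-\eps)\I(\pi|\gamma)$; combined with lower semicontinuity, $\I(\pi^\eps|\gamma)\to\I(\pi|\gamma)$.

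For the $\Pi_3$-density of $\Pi_2$, given $\pi\in\Pi_2$ apply a spatial mollification compatible with Neumann boundary conditions: extend $\rho(t,\cdot)$ to $\bb R$ by successive even reflections across $0$ and $1$ (producing a $2$-periodic function, even about every integer), convolve with a smooth symmetric approximation of the identity of width $\delta<1/2$, and restrict back to $[0,1]$. The resulting $\rho^\delta(t,\cdot)\in C^\infty[0,1]$ inherits the $\Pi_2$-bounds (convolution with a nonnegative kernel of unit mass preserves pointwise bounds), preserves mass, and continues to solve the Neumann heat equation on the initial interval $[0,\delta_0]$ granted by the $\Pi_1$-property: convolution commutes with $\Delta$, and the reflection symmetry forces $\p_u\rho^\delta(t,0)=\p_u\rho^\delta(t,1)=0$. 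For the $\Pi_4$-density of $\Pi_3$, perform the analogous mollification in the time variable on $\pi\in\Pi_3$; to preserve the initial condition $\rho^\delta(0,\cdot)=\gamma$ and the $\Pi_1$-solvability property, exploit that every $\pi\in\Pi_3$ is already $C^{\infty,\infty}$ on a neighbourhood of $\{t=0\}$ (being a spatial mollification of a heat-equation solution emanating from the smooth datum $\gamma$) and localize the time mollification to $[\delta,T]$, gluing smoothly at $t=\delta$.

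The main technical obstacle is convergence of the rate function in the $\Pi_3$ and $\Pi_4$ steps, namely $\limsup_{\delta\to 0}\I(\pi^\delta|\gamma)\leq\I(\pi|\gamma)$ (the $\liminf$ being provided by lower semicontinuity). For fixed $H\in C^{1,2}$, $\ell_H^\theta(\rho^\delta)\to\ell_H^\theta(\rho)$ follows from $L^2$-continuity of convolution on the weak derivatives, which exist because Proposition~\ref{prop68} grants $\rho\in L^2(0,T;\mc H^1)$ as soon as $\I(\pi|\gamma)<\infty$; $\Phi_H(\rho^\delta)\to\Phi_H(\rho)$ follows by dominated convergence, since $\rho^\delta$ remains bounded and $\chi$ is Lipschitz. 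The delicate point is uniformizing these convergences in $H$ so that $\sup_H$ and $\lim_\delta$ may be interchanged; the key input is again the quadratic-energy bound of Proposition~\ref{prop68}, which bounds $\I(\pi^\delta|\gamma)$ uniformly in $\delta$ and allows reduction of the supremum to a countable dense family of test functions. By contrast, the $\Pi_2$ step is immediate once one recognizes convexity of $\I$ and vanishing of $\I(\lambda|\gamma)$.
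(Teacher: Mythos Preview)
Your proposal follows the same three-step mollification program as \cite[Lemmas~5.4--5.6]{LandimTsunoda}, to which the paper defers entirely. The $\Pi_2$ step via convexity of $\I(\cdot|\gamma)$ together with $\I(\lambda|\gamma)=0$ is correct and matches that reference.

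For $\Pi_3$ and $\Pi_4$, however, your $\limsup$ argument has a real gap. A uniform bound on $\I(\pi^\delta|\gamma)$ and a reduction of the supremum to a countable dense family of test functions does \emph{not} justify interchanging $\sup_H$ with $\lim_{\delta\to 0}$: pointwise convergence of $H\mapsto J_H^\theta(\pi^\delta)$ over a countable family says nothing about where the near-suprema are attained, and the supremum of a pointwise-convergent sequence need not converge. The argument in \cite{LandimTsunoda} proceeds differently. One transfers the mollification from $\rho^\delta$ to the test function via self-adjointness of the (reflected) convolution, writing $J_H^\theta(\rho^\delta)=J_{H^\delta}^\theta(\rho)+R_\delta(H,\rho)$, where $H^\delta$ is still an admissible test function so that $J_{H^\delta}^\theta(\rho)\leq \I(\pi|\gamma)$. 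The remainder $R_\delta$, which arises from the nonlinear term $\Phi_H$, is then controlled \emph{uniformly in $H$} by Cauchy--Schwarz, the Lipschitz property of $\chi$, and the energy bound of Proposition~\ref{prop68}; this yields $\sup_H J_H^\theta(\rho^\delta)\leq \I(\pi|\gamma)+o_\delta(1)$ directly. A secondary issue with your $\Pi_3$ construction is that the even-reflection spatial mollification replaces $\rho_0=\gamma$ by a smoothed version $\gamma^\delta$. The standard remedy---which you already invoke for $\Pi_4$---is to exploit that $\rho\in\Pi_2\subset\Pi_1$ follows the Neumann heat flow on $[0,\delta_0]$ and is therefore $C^\infty$ in space for $t>0$ by parabolic smoothing, and then to localize the spatial mollification away from $t=0$.
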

The proof of the  Lemma \ref{lemma5.456} can be promptly adapted from \cite[Lemmas 5.4, 5.5 and 5.6]{LandimTsunoda},  and for this reason its proof is omitted. We thus conclude the proof of the $\I$-density, that is, the proof of Theorem~\ref{Idensity}.
\appendix
\section{Auxiliary results}\label{Appendix}
We argue here why the correct formula for $\frac{\dradon \bb P}{\dradon \overline{\bb P}}\big|_{\mc F_t}$ is that one in \eqref{der_rN}, which is the inverse of that one in 
\cite[formula (2.6), page 320]{kl}. First we note that   the Radon-Nikodym derivative  can be characterized as the unique measurable function  $\frac{\dradon \bb P}{\dradon \overline{\bb P}}\big|_{\mc F_t}$ such that 
\begin{align*}
\bb E_x[F]\;=\; \overline{\bb E}_x\Big[F\,\pfrac{\dradon \bb P}{\dradon \overline{\bb P}}\big|_{\mc F_t} \Big]\,,\quad\text {for all  bounded measurable functions } F.
\end{align*} 
As one can see in \cite[page 321]{kl}, when ending of the proof of Proposition 2.6 there, it is obtained that 
\begin{align*}
\bb E_x[F]\;=\; \overline{\bb E}_x\Bigg[&F\,  \exp\,\Big\{-\int_0^t[\lambda(X_s)-\overline{\lambda}(X_s)] \,ds +\sum_{s\leq t}\log \frac{\lambda(X_{s^-})p(X_{s^-}, X_s)}{\overline{\lambda}(X_{s^-})\overline{p}(X_{s^-}, X_s)}\Bigg]\,.
\end{align*}
Thus, 
\begin{equation*}
\frac{\dradon \bb P}{\dradon \overline{\bb P}}\Bigg|_{\mc F_t}\;=\;
\exp\Bigg\{-\Bigg(\int_0^t\big[\lambda(X_s)-\overline{\lambda}(X_s)\big]ds-\sum_{s\leq t}\log\frac{\lambda(X_{s^-})p(X_{s^-},X_s))}{\overline{\lambda}(X_{s^-})\overline{p}(X_{s^-},X_s)}\Bigg)\Bigg\}\,,
\end{equation*} 
justifying why \eqref{der_rN} is the correct formula instead of \cite[formula (2.6), page 320]{kl}.

\section*{Acknowledgements}
T.\ F. was supported  by the National Council for Scientific and Technological Development (CNPq-Brazil) through a \textit{Bolsa de Produtividade} number 301269/2018-1. P.G. thanks  FCT/Portugal for support through the project 
UID/MAT/04459/2013.  This project has received funding from the European Research Council (ERC) under  the European Union's Horizon 2020 research and innovative programme (grant agreement   n. 715734).

\bibliographystyle{abbrv}
\bibliography{bibliography}

\end{document}